\DeclareMathAlphabet{\mathpzc}{OT1}{pzc}{m}{it}

\documentclass[headsepline=true]{scrartcl}
\usepackage{amsmath}
\usepackage{amsthm, amssymb, amsfonts,bbm}
\usepackage[top=1.2in,bottom=1.2in,left=1in,right=1in]{geometry}
\usepackage{mathtools}
\usepackage{stmaryrd}
\usepackage[pdfborder={0 0 0}, hypertexnames=false]{hyperref}
\usepackage{amscd}
\usepackage{tensor}
\usepackage{mathrsfs}
\usepackage{arydshln}
\usepackage[dvipsnames]{xcolor}
\usepackage{tikz}
\usetikzlibrary{decorations.markings}
\usetikzlibrary{decorations.pathreplacing}
\usepackage{lscape}
\usepackage{enumerate}
\usepackage{subcaption}
\usepackage[capitalize]{cleveref}
\usepackage{autonum}
\usepackage{multirow}
\usepackage{wrapfig}
%\input{insbox}

%Zeros of quasimodular forms 
\title{Critical points of modular forms}
%\subtitle{}
\author{Jan-Willem van Ittersum%
\thanks{\emph{Email}: \href{mailto:j.w.ittersum@math.uni-koeln.de}{j.w.ittersum@math.uni-koeln.de}, \newline
Max-Planck-Institut f\"ur Mathematik, Vivatsgasse 7, 53111 Bonn, Germany
\newline 
\emph{Current address}: University of
Cologne, Department of Mathematics and Computer Science, Weyertal 86-90, 50931 Cologne, Germany
}
, Berend Ringeling%
\thanks{\emph{Email}: \href{mailto:bjringeling@gmail.com}{bjringeling@gmail.com}, \newline
Department of Mathematics, IMAPP, Radboud University, PO Box 9010, 6500~GL Nijmegen, The Netherlands
}
}

\newcommand{\sltwoz}{\mathrm{SL}_2(\z)}

\DeclareMathOperator{\sgn}{sgn}

\renewcommand{\vec}{\underline}
\renewcommand{\phi}{\varphi}

\newcommand{\z}{\mathbb{Z}}
\newcommand{\q}{\mathbb{Q}}
\renewcommand{\r}{\mathbb{R}}
\renewcommand{\c}{\mathbb{C}}

\newcommand{\dd}{\mathop{}\!\mathrm{d}}
\newcommand{\pdv}[2]{\frac{\partial #1}{\partial #2}}

\theoremstyle{plain} 
\newtheorem{thm}{Theorem}[section]
\newtheorem{lem}[thm]{Lemma} 
\newtheorem{cor}[thm]{Corollary} 
\newtheorem{prop}[thm]{Proposition}

\theoremstyle{definition} 
\newtheorem{defn}[thm]{Definition}
\newtheorem{exmp}[thm]{Example}

\theoremstyle{remark}

\newenvironment{remark}
  {\pushQED{\qed}\remarkx}
  {\popQED\endremarkx}
\newtheorem{question}{Question}

\newcommand{\step}[1]{\mbox{}\newline\noindent{\emph{#1.}}\ \ }

\newcommand{\ii}{\mathrm{i}}
\renewcommand{\Re}{\mathrm{Re}\,}
\renewcommand{\Im}{\mathrm{Im}\,}

\renewcommand{\=}{\: =\: }
\newcommand{\defis}{\: :=\: }
\newcommand{\+}{\,+\,}
\newcommand{\meno}{\,-\,}

%\allowdisplaybreaks

\begin{document}
\maketitle
\begin{abstract}
We count the number of critical points of a modular form with real Fourier coefficients % , or, more generally, the number of zeros of quasimodular forms 
in a $\gamma$-translate of the standard fundamental domain~$\mathcal{F}$ (with $\gamma\in \sltwoz$). Whereas by the valence formula the (weighted) number of zeros of this modular form in~$\gamma\mathcal{F}$ is a constant only depending on its weight, we give a closed formula for this number of critical points in terms of those zeros of the modular form lying on the boundary of~$\mathcal{F},$ the value of $\gamma^{-1}(\infty)$ and the weight. 
%The dependence on $\gamma$ is rather mild: in fact, we only distinguish three cases. 
%We give a closed formula for this number, 
More generally, we indicate what can be said about the number of zeros of a quasimodular form.
\end{abstract}

\section{Introduction}
\paragraph{A valence formula for quasimodular forms}
For a non-zero modular form~$g$ of weight~$k$, the (weighted) number of zeros in a fundamental domain is given by 
\begin{equation}\label{eq:vf} \sum_{\tau \in \gamma\mathcal{F}} \frac{\nu_\tau(g)}{e_\tau} \= \frac{k}{12}\end{equation}
for all $\gamma \in \sltwoz$, where~$\mathcal{F}$ denotes the standard fundamental domain for the action of~$\sltwoz$ on the extended complex upper half plane, $\nu_\tau(g)$ denotes the order of vanishing of~$g$ at~$\tau$ (see \cref{sec:set-up} for the definitions) and~$e_\tau=2$ for an $\sltwoz$-translate of $\ii$, $e_\tau=3$ for an $\sltwoz$-translate of $\rho=-\frac{1}{2}+ \frac{1}{2}\sqrt{3}\,\ii$ and $e_\tau=1$ else, including at the cusp at infinity. Much less is known about the (weighted) number of zeros of derivatives of modular forms, or, more generally, of zeros of quasimodular forms. That is, in this paper we study the value
\[ N_\lambda(f) \defis \sum_{\tau \in \gamma\mathcal{F}} \frac{\nu_\tau(f)}{e_\tau} \qquad\qquad \bigl(\gamma = \left(\begin{smallmatrix} a & b \\ c & d \end{smallmatrix}\right)\in \sltwoz \text{ such that } \lambda=-\tfrac{d}{c}\bigr) \]
for derivatives of modular forms or, more generally, for quasimodular forms~$f$ (the quantity~$N_\lambda(f)$ is well-defined if $f$ is quasimodular). 

As an example, consider the critical points of the modular discriminant~$\Delta$. Note $\Delta'=\Delta E_2$ (with $f'=\frac{1}{2\pi \ii}\frac{\mathrm{d}}{\mathrm{d}\tau}f$), where 
\[E_2(\tau) \= 1 - 24 \sum_{m,r\geq 1} m\,q^{mr}  \qquad\qquad (\tau \in \mathfrak{h}, \text{ the complex upper half plane})\]
is the quasimodular Eisenstein series of weight~$2$ transforming as
\[ (E_2|\gamma)(\tau) \= E_2(\tau) + \frac{12}{2\pi \ii}\frac{c}{c\tau+d} \= E_2(\tau) + \frac{12}{2\pi \ii}\frac{1}{\tau-\lambda(\gamma)}\]
for all $\gamma = \left(\begin{smallmatrix} a & b \\ c & d \end{smallmatrix}\right)\in \sltwoz$ and with $\lambda(\gamma)=-\frac{d}{c}.$
For~$E_2$ the number of zeros in a fundamental domain depends on the choice of this domain. There are infinitely many non-equivalent zeros of~$E_2$; in fact, two zeros are only equivalent if one is a $\z$-translate of the other \cite{BS10}.  Nevertheless, one can still count the number of zeros of~$E_2$ in~$\gamma\mathcal{F}$:
\begin{equation}
\label{eq:E2zeros}N_\lambda(E_2) \=
\begin{cases}
 0 &  |\lambda| \in (\frac{1}{2},\infty] \\
1 &  |\lambda| \in [0,\frac{1}{2}),
\end{cases}\end{equation}
as follows from \cite{IJT14,WY14}.  
Recently, Gun and Oesterl\'e counted the number of critical points of the Eisenstein series~$E_{k}$ for $k>2$ \cite{GO20}
\begin{equation}
\label{eq:DEkzeros}N_\lambda(E_{k}') \=
\begin{cases}
\bigl\lfloor \frac{k+2}{6}\bigr\rfloor+\frac{1}{3}\delta_{k\equiv 2 \, (6)} &  |\lambda|\in (1,\infty] \\[3pt]
\frac{1}{3}\delta_{k\equiv 2 \, (6)} & |\lambda|\in [0,1).
\end{cases}\end{equation}
(Note $E_k$ has a double zero at $\rho=-\frac{1}{2}+ \frac{1}{2}\sqrt{3}\,\ii$ for $k\equiv 2 \mod 6$. Hence, the summand $\frac{1}{3}\delta_{k\equiv 2 \, (6)}$ corresponds to the trivial zero of $E_k'$ at a $\gamma$-translate of $\rho$.)

\paragraph{Critical points of modular forms}
Modular forms admit infinitely many non-equivalent critical points \cite{Seb12}. By counting the number of critical points within a fundamental domain $\gamma\mathcal{F}$ ($\gamma \in \sltwoz$), we refine this statement.
Notice that the only zero of~$\Delta$ in the fundamental domain~$\mathcal{F}$ is at the cusp, whereas the Eisenstein series have all their zeros in~$\mathcal{F}$ on the unit circle \cite{RSD}. Our main theorem expresses the number of critical points of a modular form 
in terms of the number of zeros of this modular form on the boundary of~$\mathcal{F}$. 

Write~$C(g)$ for the number of distinct zeros~$z$ of~$g$ satisfying $|z|=1$ and $-\frac{1}{2}\leq \Re (z)\leq 0$, 
where a zero $z$ is counted with weight $e_z^{-1}$ (i.e., a zero at $\rho=-\frac{1}{2}+ \frac{1}{2}\sqrt{3}\,\ii$ or at~$\ii$ is counted with weight~$\frac{1}{3}$ or~$\frac{1}{2}$ respectively).
Write~$L(g)$ for the number of distinct zeros~$z$ of~$g$ at the cusp or satisfying $\mathrm{Re}(z)=-\frac{1}{2}$ and $|z|> 1$. 
\begin{thm}\label{thm:crit} Let~$g$ be a modular form of weight~$k$ with \emph{real} Fourier coefficients.
Then,
 \begin{align}
N_\lambda(g')\= \frac{k}{12} \+ \begin{cases}
\phantom{-}C(g)\+ \frac{1}{3}\delta_{g(\rho)=0} & |\lambda| \in (1,\infty]\\
-C(g) & |\lambda| \in (\frac{1}{2},1) \\
-C(g)+L(g) & |\lambda| \in [0,\frac{1}{2}).
\end{cases}
\end{align}
\end{thm}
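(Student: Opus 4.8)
The plan is to reduce $N_\lambda(g')$ to the number of zeros of an auxiliary \emph{holomorphic} function $G_\lambda$ in the fixed domain $\mathcal{F}$, to count those by the argument principle, and to read off the dependence on $|\lambda|$ from the position of the single pole of $G_\lambda$ relative to $\partial\mathcal{F}$. Write $\gamma=\left(\begin{smallmatrix}a&b\\c&d\end{smallmatrix}\right)$ with $\lambda=-d/c=\gamma^{-1}(\infty)$ (and $c=0$, $\lambda=\infty$ when $|\lambda|=\infty$). Differentiating $g(\gamma\tau)=(c\tau+d)^{k}g(\tau)$ yields
\[
(g'|_{k+2}\gamma)(\tau)\=g'(\tau)\+\frac{k}{12}\cdot\frac{12}{2\pi\ii}\cdot\frac{g(\tau)}{\tau-\lambda}\=\vartheta_k g(\tau)\+\frac{k}{12}(E_2|\gamma)(\tau)\,g(\tau),
\]
where $\vartheta_k g=g'-\frac{k}{12}E_2 g$ is the Serre derivative, a modular form of weight $k+2$. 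Since the automorphy factor is nonzero on $\mathfrak{h}$, the zeros of $g'$ in $\gamma\mathcal{F}$ correspond, with multiplicity and stabilizer weight, to the zeros in $\mathcal{F}$ of
\[
G_\lambda\defis g'\+\frac{k}{12}\cdot\frac{12}{2\pi\ii}\cdot\frac{g}{\tau-\lambda},
\]
so $N_\lambda(g')$ is the weighted number of zeros of $G_\lambda$ in $\mathcal{F}$. From the real Fourier coefficients of $g$, hence of $g'$, one gets $\overline{G_\lambda(\tau)}=G_{-\lambda}(-\bar\tau)$; reflecting in the imaginary axis gives $N_\lambda(g')=N_{-\lambda}(g')$, so I may assume $\lambda\in(-\infty,0]\cup\{\infty\}$. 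Two facts are used throughout: $G_\lambda$ is holomorphic on $\mathfrak{h}$ (its only pole, at $\tau=\lambda$, lies on $\r$), and $G_\lambda|_{k+2}\gamma'=G_{(\gamma')^{-1}(\lambda)}$ for $\gamma'\in\sltwoz$; in particular $G_\lambda|_{k+2}S=G_{-1/\lambda}$ and $G_\lambda|_{k+2}T=G_{\lambda-1}$.

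I then count the zeros of $G_\lambda$ in $\mathcal{F}$ from
\[
N_\lambda(g')\=\lim_{T\to\infty}\frac{1}{2\pi\ii}\oint_{\partial\mathcal{F}_T}\frac{\dd G_\lambda}{G_\lambda}\;+\;(\text{contribution at the cusp}),
\]
$\mathcal{F}_T=\mathcal{F}\cap\{\Im\tau\le T\}$, with the usual half-order detours around the elliptic points $\rho,\ii,\rho'$ and around any zero of $G_\lambda$ on $\partial\mathcal{F}$. Split $\partial\mathcal{F}$ into the unit-circle arc $A$, the vertical sides $V_{\pm}=\{\Re\tau=\pm\tfrac12\}$ and the top. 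The real coefficients pin the phase of $G_\lambda$ on each piece. On $V_-$, with $\tau=-\tfrac12+\ii t$, both $g$ and $g'$ are real and $\Im G_\lambda=\frac{k}{2\pi}\,g\cdot\frac{\tfrac12+\lambda}{(\tfrac12+\lambda)^2+t^2}$, so $G_\lambda$ is real on $V_-$ exactly at the zeros of $g$ there, with the direction in which $\arg G_\lambda$ crosses the real axis at such a zero governed by $\sgn(\tfrac12+\lambda)$, which flips at $|\lambda|=\tfrac12$; on $V_+$ the analogous sign $\sgn(\tfrac12-\lambda)$ is constant for $\lambda\le0$. On $A$, combining $G_\lambda|_{k+2}S=G_{-1/\lambda}$ with $\overline{G_\lambda(\tau)}=G_{-\lambda}(-\bar\tau)$ gives, at $\tau=\mathrm{e}^{\ii\theta}$,
\[
\mathrm{e}^{-\ii(k+2)\theta}\,G_\lambda(\mathrm{e}^{\ii\theta})\,G_{1/\lambda}(\mathrm{e}^{\ii\theta})\=\bigl|G_\lambda(\mathrm{e}^{\ii\theta})\bigr|^{2}\;\ge\;0,
\]
so $G_\lambda G_{1/\lambda}$ has explicit phase along $A$ and the arc-windings of $\arg G_\lambda$, $\arg G_{1/\lambda}$ are coupled; since $|1/\lambda|\gtrless1\iff|\lambda|\lessgtr1$, this is the source of the threshold at $|\lambda|=1$.

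Carrying out the boundary integral piece by piece, using these two couplings: the arc together with the corner detours gives the ``valence part'', which reduces to $\tfrac{k}{12}$ plus $+C(g)$ when $|\lambda|>1$ and $-C(g)$ when $|\lambda|<1$, as $C(g)$ counts exactly the zeros of $g$ on the left half $\{-\tfrac12\le\Re\tau\le0\}$ of $A$, weighted by $e_z^{-1}$; the detour at $\rho$ is where, if $g(\rho)=0$, Serre-differentiation forces a zero of $g'$ of weight $\tfrac13$, giving the extra $\tfrac13\delta_{g(\rho)=0}$ when $|\lambda|>1$. The contributions of $V_+$, of the top and of the cusp combine into a $\lambda$-independent quantity that, once the valence formula for $g$ is taken into account, contributes no further term; and $V_-$ contributes $0$ for $|\lambda|>\tfrac12$ and $+L(g)$ for $|\lambda|<\tfrac12$, because $L(g)$ counts exactly the zeros of $g$ on $V_-$ together with the cusp and the extra crossings of $\arg G_\lambda$ at these zeros switch on precisely when $\sgn(\tfrac12+\lambda)$ changes sign. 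Collecting the cases $|\lambda|>1$, $\tfrac12<|\lambda|<1$ and $|\lambda|<\tfrac12$ gives the asserted formula; the open-interval hypotheses guarantee that $G_\lambda$ has no zero on $\partial\mathcal{F}$, so the count is locally constant in $\lambda$ and equals this generic value.

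The main obstacle is the sign bookkeeping. One must (i) prove, by a Rankin--Swinnerton-Dyer type argument combining the reality of $g$, $g'$ and $\vartheta_k g$ on the boundary pieces, that $\arg G_\lambda$ winds along each piece by exactly the amount dictated by the zeros of $g$ lying on it, with no spurious windings caused by zeros of $G_\lambda$ in the interior of $\mathcal{F}$ near the boundary; (ii) determine whether a zero of $G_\lambda$ sits at $\rho,\ii,\rho'$ or ``at the cusp'', and with what weight, using the local expansions of $g$, $g'$ and the fact that $g/(\tau-\lambda)\to0$ as $\Im\tau\to\infty$ more slowly than any power $q^{N}$ but no slower than $q^{0}$, so that it produces a zero of $G_\lambda$ at the cusp precisely when $g$ is a cusp form; and (iii) verify that for $|\lambda|$ in each of the three open intervals $G_\lambda$ has no zero on $\partial\mathcal{F}$, so that no half-order detours occur. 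Step (i) is the crux.
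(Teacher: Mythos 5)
Your skeleton is sound and is in substance the same machinery the paper uses: the function $G_\lambda=g'|_{k+2}\gamma=g'+\tfrac{k}{2\pi\ii}\tfrac{g}{\tau-\lambda}$ is (up to the factor $f=g'$) the paper's equivariant function $h=\tau+\tfrac{12}{2\pi\ii}\tfrac{f_1}{f}$ with $f_1=\tfrac{k}{12}g$, the reflection symmetry gives $N_\lambda=N_{-\lambda}$, the reality of $G_\lambda$ on the vertical sides at the zeros of $g$ with orientation governed by $\sgn(\tfrac12+\lambda)$ produces the threshold at $|\lambda|=\tfrac12$ and the $L(g)$ term, and the $S$-coupling on the arc produces the threshold at $|\lambda|=1$. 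But the proposal stops exactly where the proof has to start. The arc coupling only pins down the \emph{sum} of the windings of $G_\lambda$ and $G_{1/\lambda}$ (this is the paper's Theorem~\ref{thm:5.3}, $N_\lambda+N_{-1/\lambda}=\lfloor k/6\rfloor$, resp.\ the longer formula); to separate the cases $|\lambda|>1$ and $\tfrac12<|\lambda|<1$ you must evaluate one of them unconditionally, i.e.\ actually compute the variation of the argument of $\widehat{g'}(\theta)=e^{(k+2)\ii\theta/2}g'(e^{\ii\theta})$ along the arc. That is the content of Section~\ref{sec:3}: one shows $\Im\widehat{g'}=\tfrac{k}{4\pi}\widehat{g}$, so the real-axis crossings occur at the zeros of $g$ on the arc, and one then needs the alternation $(-1)^j\sgn\widehat{g'}(\theta_j)=\text{const}$, which comes from $g'$ changing sign between consecutive zeros of the real-valued function $\theta\mapsto\widehat{g}(\theta)$; only then does $C(g)$ materialize with the correct sign in each regime. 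Your sentence ``the arc together with the corner detours gives the valence part, which reduces to $\tfrac{k}{12}$ plus $+C(g)$ when $|\lambda|>1$ and $-C(g)$ when $|\lambda|<1$'' is the statement to be proved, not a step, and you concede as much (``Step (i) is the crux'').

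Two assertions would also fail as written. First, ``the open-interval hypotheses guarantee that $G_\lambda$ has no zero on $\partial\mathcal{F}$'' is false: whenever $v_\rho(g)\geq 2$ (which is forced if $g(\rho)=0$ and $k\equiv 0,2\bmod 6$), $g'$ and hence every $G_\lambda$ vanishes at $\rho\in\partial\mathcal{F}$; this forced boundary zero of weight $e_\rho^{-1}=\tfrac13$ is precisely where the term $\tfrac13\delta_{g(\rho)=0}$ comes from, and deciding in \emph{which} of the three $\lambda$-ranges it survives requires first dividing $g'$ and $g$ by their common factor $E_4^{\delta'}E_6^{\epsilon'}$ (the paper's $\phi$) and then restoring the contribution through the floor/ceiling identities --- a one-line appeal to ``Serre-differentiation forces a zero of weight $\tfrac13$'' does not settle the asymmetry between the cases. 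Second, the top and cusp contributions are not $\lambda$-independent: whether $a_0(g)$ vanishes decides whether the cusp belongs to $L(g)$ and contributes the $\pm\tfrac12$ boundary term on the upper vertical segment (the paper's $s(f)$), and this enters only in the range $|\lambda|<\tfrac12$. So the plan is the right one, but the quantitative heart of the theorem --- the arc evaluation at $\lambda=\infty$, the elliptic-point bookkeeping, and the cusp term --- is missing rather than merely routine.
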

In particular, for any sequence of cuspidal Hecke eigenforms~$g_k$ of weight~$k$ we have
\[ \frac{N_\lambda(g_k')}{N_\lambda(g_k)} \to 1 \quad\quad \text{ as } k\to \infty,\]
as by the holomorphic quantum unique ergodicity theorem the zeros of~$g_k$ are equidistributed in~$\mathcal{F}$ as $k\to\infty$ \cite{HS10}. This leads to the question of whether the zeros of derivatives of Hecke eigenforms are also equidistributed.

Moreover, if~$g$ is a modular form with \emph{all} its zeros in the interior of~$\mathcal{F}$, we find
\[ N_\lambda(g') \= \frac{k}{12} \= N_\lambda(g).\]
(In case all the zeros of~$g$ lie in the interior of~$\mathcal{F}$, then $k\equiv 0 \mod 12$.) Observe that as $\Delta$ has a unique zero at the cusp, we have $C(\Delta)=0$ and $L(\Delta)=1$, by which we recover~\eqref{eq:E2zeros}. Moreover, for modular Eisenstein series we have $C(E_k)=\frac{k}{12}-\frac{1}{3}\delta_{k\equiv 2\,(6)}$ and $L(E_k)=0$, by which we recover~\eqref{eq:DEkzeros}.

\paragraph{Further results}
We aim to generalize these formulae to \emph{all} quasimodular forms, i.e., polynomials in $E_2$ with modular coefficients.
First of all, we show that for any quasimodular form~$f$ the number~$N_\lambda(f)$ only takes finitely many different values if we vary~$\lambda$.
\begin{thm}\label{thm:main1} For a given quasimodular form~$f$, there exist finitely many disjoint intervals~$I$ such that $\mathbb{R}=\cup_{I\in \mathscr{I}} I$, and for each~$I$ there exists a constant~$N_I(f)\in \frac{1}{6}\z$ such that
\[N_\lambda(f) = N_{I}(f) \qquad \text{if }\lambda \in I.\]
\end{thm}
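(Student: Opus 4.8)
The plan is to track how $N_\lambda(f)$ changes as $\lambda$ varies, using the argument principle applied to $f$ on the boundary of the translated fundamental domain $\gamma\mathcal{F}$. Write $f = \sum_{j=0}^{p} f_j E_2^j$ with $f_j$ modular of weight $k-2j$, and recall that $(f|_k\gamma)(\tau)$ is a polynomial in $\frac{1}{\tau-\lambda}$ with modular-form coefficients (by the transformation law for $E_2$ quoted in the introduction). Hence the zeros of $f$ in $\gamma\mathcal{F}$ correspond, via $\tau\mapsto\gamma^{-1}\tau$, to zeros of a fixed meromorphic function $F_\lambda(\tau) := (f|_k\gamma)(\tau)\cdot(\tau-\lambda)^{p}$ — a holomorphic function on $\mathfrak h$ depending real-analytically (in fact polynomially) on the single real parameter $\lambda$ — lying in the standard domain $\mathcal{F}$. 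By the argument principle,
\[
N_\lambda(f) \;=\; \frac{1}{2\pi\ii}\oint_{\partial\mathcal{F}} \frac{F_\lambda'(\tau)}{F_\lambda(\tau)}\,\dd\tau \;+\; (\text{boundary-identification corrections}),
\]
where the contour is the truncated boundary of $\mathcal{F}$ (capped at height $T$), and the correction terms account for the $\psltwoz$-identifications of $\partial\mathcal{F}$ and for the contribution at the cusp (the $q$-expansion of $F_\lambda$ at $\infty$ has a lowest-order term whose vanishing order is a step function of $\lambda$). The key point is that each of these ingredients is piecewise constant in $\lambda$.

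The main steps, in order. First I would show $N_\lambda(f)$ is locally constant on $\mathbb{R}$ away from a finite set of ``transition'' values: the integral above changes only when a zero of $F_\lambda$ crosses $\partial\mathcal{F}$, and by Hurwitz's theorem such a crossing forces a zero of $F_\lambda$ actually on the boundary arc for the transition value $\lambda_0$. Because $F_{\lambda_0}$ restricted to each of the three boundary pieces (the two vertical lines $\Re\tau=\pm\frac12$ and the arc $|\tau|=1$) is a nonzero real-analytic function of one real variable — nonzero because $f$, hence $F_\lambda$, is not identically zero and the real Fourier coefficients keep things from degenerating identically — it has only finitely many zeros there. Second, I would bound the \emph{number} of transition values of $\lambda$. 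Here is where the polynomial structure matters: ``$F_{\lambda_0}$ vanishes at a boundary point $\tau_0$'' is a single real-analytic equation $P(\lambda_0,\tau_0)=0$ (a polynomial in $\lambda_0$ of degree $\leq p$ with coefficients that are values of modular forms) together with $\tau_0$ on a fixed one-dimensional arc, so the set of admissible $\lambda_0$ is the zero set of a nonconstant real-analytic function of $\lambda_0$ on a compact parameter range — and on the range $|\lambda|\in[0,\tfrac12]$ one can even list them explicitly from the arc parametrization — hence finite. For $|\lambda|$ large (equivalently $\gamma^{-1}(\infty)=\lambda$ far out), one checks directly that $N_\lambda(f)$ stabilizes: as $|\lambda|\to\infty$, $f|_k\gamma\to f_0$ (the weight-$k$ modular part) uniformly on compacta and at the cusp, so $N_\lambda(f)=N_\infty(f)$ for all sufficiently large $|\lambda|$, giving the two unbounded intervals. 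Third, I would observe the values lie in $\tfrac16\z$: each $\tfrac{v_\tau(f)}{e_\tau}$ has denominator dividing $e_\tau\in\{1,2,3\}$, and a finite sum of such is in $\tfrac16\z$.

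The hard part will be controlling the behavior at the cusp uniformly, and ruling out ``accumulation'' of transition values. The subtlety is that $F_\lambda$ has a pole at $\tau=\lambda$ (when $\lambda$ lands inside or near $\overline{\mathcal F}$), so for $|\lambda|$ small one must carefully separate the genuine zeros in $\mathcal F$ from the pole and from zeros escaping to the cusp; this is exactly the regime $|\lambda|\in[0,\tfrac12)$ where, in the modular case, the extra $L(g)$ term appears, and it must be handled by a residue/indentation argument on the contour near $\tau=\lambda$ and near $\ii\infty$. To preclude accumulation, I would invoke that $P(\lambda,\tau_0)$ is real-analytic (genuinely, a polynomial) in $\lambda$ and \emph{not identically zero} in $\lambda$ for each boundary arc — the latter because specializing to $|\lambda|=\infty$ gives $f_0\neq 0$ unless $f$ is purely a polynomial in $E_2$ with no weight-$k$ part, a case one treats separately by factoring out the appropriate power of $(\tau-\lambda)$ — so its real zero set is finite on any bounded interval, and combined with stabilization at $\infty$ this yields the finite partition $\mathscr I$ of $\mathbb R$ claimed.
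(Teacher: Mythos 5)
Your overall strategy is the same as the paper's: express $N_\lambda(f)$ as a contour integral over (a modification of) $\partial\mathcal{F}$, observe that it can only jump when a zero crosses the boundary, show the set of such transition values of $\lambda$ is finite, and note stabilization as $|\lambda|\to\infty$. Indeed your $F_\lambda(\tau)=(\tau-\lambda)^p(f|_k\gamma)(\tau)$ factors as $f(\tau)\prod_j\bigl(h_j(\tau)-\lambda\bigr)$, where the $h_j$ are the equivariant solution branches the paper introduces in Section~4, so the two formulations are equivalent.

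However, the step that carries the entire content of the theorem --- finiteness of the transition set --- is justified incorrectly. You assert that the set of admissible $\lambda_0$ is ``the zero set of a nonconstant real-analytic function of $\lambda_0$,'' but it is not: the condition is $\exists\,\tau_0\in\partial\mathcal{F}$ with $P(\lambda_0,\tau_0)=0$, i.e.\ the \emph{projection} to the $\lambda$-axis of a zero locus in the two real variables $(\lambda_0,t_0)$ (with $t_0$ parametrizing a boundary arc). Projections of real-analytic zero sets can be intervals (e.g.\ $P(\lambda,t)=\lambda-t$), so real-analyticity in $\lambda$ alone proves nothing. Concretely, for each fixed $t$ the roots of $\lambda\mapsto P(\lambda,\tau(t))$ are precisely $h_1(\tau(t)),\dots,h_p(\tau(t))$, so the transition set is $\bigcup_j h_j(\partial\mathcal{F})\cap\r$, and finiteness requires showing that each curve $t\mapsto h_j(\tau(t))$ meets the real axis only finitely often --- equivalently, that $\Im h_j$ is not identically zero on any boundary sub-arc. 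This degenerate possibility is exactly what your argument does not rule out; the paper deals with it by perturbing the contour to a family $\mathscr{C}_\epsilon$ on which the $h_j$ take only finitely many real values, and then letting $\epsilon\to 0$ (and, in depth~$1$, by the equivariance relations of Lemma~4.3, which pin the possible real values down to $\pm\frac12,\pm1,\infty$ exactly). To repair your proof you would need either this perturbation device or a direct argument that $\Im F_\lambda$ (as a function of the pair $(\lambda,t)$) cannot vanish on a set projecting onto an interval. The remaining ingredients of your proposal (stabilization $N_\lambda=N_\infty$ for $|\lambda|$ large, values in $\frac16\z$) are fine and match the paper.
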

For instance, in \cref{ex:criticalE2} we will see that 
\begin{align}\label{eq:E2'} N_\lambda(E_2') \= \begin{cases}
1 & |\lambda|\in (\frac{1}{v},\frac{1}{2})\cup (v,\infty] \\
0 & |\lambda|\in (0,\frac{1}{v})\cup (\frac{1}{2},v).
\end{cases}\end{align}
for some $v\in (5,6)$, which should be compared to the results in~\cite{CL19}. 

Secondly, we study in more detail the case that $f=f_0+f_1E_2$ for some modular forms~$f_0$ and~$f_1$ of weight~$k$ and $k-2$ respectively and with real Fourier coefficients. For example, 
the first derivative of a modular form can be written in such a way. From now on, assume that $f_0$ and $f_1$ admit no common zeros in the extended upper half plane~${\mathfrak{h}}^*$. 
We give closed formulas for~$N_\lambda(f)$ depending on the behaviour of~$f_1$ at~$\rho$, its zeros on the boundary of~$\mathcal{F}$, and the value of~$f$ at $\ii\infty,\rho$ and these zeros of~$f_1$. That is, let $z_1,\ldots, z_m$ be the zeros of~$f_1$ such that $\Re z_i=-\frac{1}{2}$ and $\Im z_i>\frac{1}{2}\sqrt{3}$, counted with multiplicity and ordered by imaginary part, and let $z_0=\rho$. % and $z_{m+1}=-\frac{1}{2}+\ii \infty$. 
Also, let $\theta_1,\ldots,\theta_n$ be the angles of those zeros of~$f_1$ on the unit circle   satisfying $\frac{2\pi}{3}\geq \theta_1\geq \theta_2\geq \ldots \geq \theta_n\geq \frac{\pi}{2}$ (counted with multiplicity). We introduce the following notation:
\begin{itemize}\itemsep0pt
\item $\widehat{f}(\theta) = e^{\frac{1}{2}k \ii \theta}f(e^{\ii \theta})$
\item $r(f_1)$ denotes the sign of the first non-zero Taylor coefficient in the \emph{natural} Taylor expansion of~$f_1$ around~$\rho$ (see~\eqref{eq:r}); $\nu_\rho(f_1)$ denotes the order of vanishing of $f_1$ at $\rho$. 
\item $s(f)=\sgn a_0(f)$ if~$f$ does not vanish at infinity, and $s(f)=-\sgn a_0(f_1)$ else. Here, $a_0$ denotes the constant term in the Fourier expansion, i.e., $a_0(f)=\lim_{z\to -\frac{1}{2}+\ii \infty} f(z)$. 
\item $w(z_0)=2$ if~$z_0$ equals $\rho,\ii$ or $-\frac{1}{2}+\ii \infty$, and $w(z)=1$ for all other $z\in \mathcal{F}$.
\end{itemize}

\begin{thm}\label{thm:main2} Let $f=f_0+f_1E_2$ be a quasimodular form of weight~$k$ for which~$f_0$ and~$f_1$ are modular forms without common zeros on ${\mathfrak{h}}^*$ and with real Fourier coefficients. Then, there exist constants $N_{(1,\infty]}(f),N_{(\frac{1}{2},1)}(f),N_{[0,\frac{1}{2})}(f)\in \z$ such that 
\[ N_\lambda(f) = \begin{cases}
N_{(1,\infty]}(f) & |\lambda|\in (1,\infty] \\
N_{(\frac{1}{2},1)}(f) & |\lambda| \in (\frac{1}{2},1) \\
N_{[0,\frac{1}{2})}(f) & |\lambda|\in [0,\frac{1}{2}).
\end{cases} \]
Moreover, these constants are uniquely determined by
\begin{align}
\label{eq:Ninf}    N_{(1,\infty]}(f)  &\= \frac{1}{2}\biggl \lfloor \frac{k}{6} \biggr \rfloor \meno  (-1)^{\nu_\rho(f_1)} r(f_1)\sum_{j=1}^n \frac{(-1)^j}{w(e^{\ii \theta_j})}  \,\sgn \widehat{f}(\theta_j)\\
\label{eq:N1/2} N_{(\frac{1}{2},1)}(f) &\= \phantom{\frac{1}{2}} \biggl\lfloor \frac{k}{6} \biggr\rfloor \meno N_{(1,\infty]}(f)  \\
\label{eq:N0} N_{[0,\frac{1}{2})}(f) &\= \phantom{\frac{1}{2}} \biggl\lceil\frac{k}{6}\biggr\rceil  \meno N_{(1,\infty]}(f)
\meno r(f_1)\sum_{j=0}^{m} \frac{(-1)^j}{w(z_j)}\, \sgn f(z_j) - \frac{1}{2} (-1)^{m+1} r(f_1)\, s(f). 
\end{align} 
\end{thm}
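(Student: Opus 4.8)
The plan is to count zeros of $f=f_0+f_1E_2$ in $\gamma\mathcal{F}$ by an argument-principle/winding-number computation along the boundary $\partial(\gamma\mathcal{F})$, reducing everything to the behaviour of $f$ and $f_1$ on the boundary of the standard domain $\mathcal{F}$. Since $f_0,f_1$ are modular with real Fourier coefficients, both are real on the vertical lines $\Re z=0$ and $\Re z=-\tfrac12$ (up to a fixed phase $e^{k\ii\theta/2}$ on the unit arc), and $E_2$ has the explicit transformation recalled in the introduction. The key observation is that on $\gamma\mathcal{F}$ the function $f|\gamma$ differs from a modular-looking piece by the term $f_1|\gamma \cdot \tfrac{12}{2\pi\ii}\,\tfrac{1}{\tau-\lambda}$, so the pole $\tau=\lambda$ of $E_2|\gamma$ enters (or does not enter) the region depending on which of the three ranges $|\lambda|\in(1,\infty], (\tfrac12,1), [0,\tfrac12)$ we are in — this is exactly what produces the three-case split and the discrepancy between $\lfloor k/6\rfloor$ and $\lceil k/6\rceil$. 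I would first establish, using \cref{thm:main1} (or rather its proof), that $N_\lambda(f)$ is constant on each of these three $|\lambda|$-ranges, so it suffices to compute one representative in each range, e.g. $\lambda=\infty$ (i.e.\ $\gamma=\mathrm{id}$, counting zeros in $\mathcal{F}$ itself), a $\lambda$ slightly bigger than $1$, and $\lambda=0$.

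For the range $|\lambda|\in(1,\infty]$ the region is $\mathcal{F}$ and we must count zeros of $f=f_0+f_1E_2$ there. Here I would integrate $\tfrac{f'}{f}$ around $\partial\mathcal{F}$, splitting the boundary into the two vertical sides, the arc of the unit circle, and the arc near $\ii\infty$. On each vertical side $f$ takes real values (after removing the known phase), so the contribution is a count of sign changes of a real function; the sign of $f$ at the corner points $\rho$, $\ii$, $\ii\infty$ and at intermediate points is controlled by the data $\sgn\widehat f(\theta_j)$, $\sgn f(z_j)$, $s(f)$, while the sign pattern is forced to alternate across consecutive zeros of $f_1$ because between two zeros of $f_1$ the dominant term $f_1E_2$ cannot vanish once $\Im\tau$ is large (where $E_2$ has no zeros and $|E_2|$ is large), and near $\rho$ the natural Taylor expansion of $f_1$ contributes the sign $r(f_1)$ and the parity $(-1)^{v_\rho(f_1)}$. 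Collecting these sign changes along the three boundary pieces, together with the $\tfrac12\lfloor k/6\rfloor$ coming from the bulk/phase winding (the $k$-dependent part, exactly as in the classical valence formula but halved because only ``half'' of $\mathcal{F}$ is cut out by the symmetry $z\mapsto -\bar z$), yields formula \eqref{eq:Ninf}. The alternating sum $\sum_{j=1}^n (-1)^j w(e^{\ii\theta_j})^{-1}\sgn\widehat f(\theta_j)$ is precisely the net count of sign changes along the unit arc weighted by the stabilizer orders at $\rho$ and $\ii$.

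For $|\lambda|\in(\tfrac12,1)$ and $|\lambda|\in[0,\tfrac12)$ I would argue by comparing the region $\gamma\mathcal F$ with $\mathcal F$: moving $\lambda$ across $1$ and across $\tfrac12$ pulls the pole of $E_2|\gamma$ (equivalently, pulls a translate of the ``singular'' locus) past a side of the fundamental domain, changing the winding-number count in a controlled way. Relation \eqref{eq:N1/2}, $N_{(1/2,1)}=\lfloor k/6\rfloor - N_{(1,\infty]}$, should drop out of the valence formula \eqref{eq:vf} applied to the product $f_0f_1$ (or to $f_1$) together with the fact that the zeros of $f$ and of a suitable modular ``conjugate'' partition the zeros of a weight-$k$-ish object, mirrored by $z\mapsto -1/\bar z$ which swaps the two regions; the $\lfloor k/6\rfloor$ is the total and $N_{(1,\infty]}$ is one half, so the other is the complement. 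For \eqref{eq:N0}, the passage $|\lambda|<\tfrac12$ additionally lets the relevant pole cross the vertical line $\Re z=-\tfrac12$, contributing the extra terms $-r(f_1)\sum_{j=0}^m (-1)^j w(z_j)^{-1}\sgn f(z_j)$ (sign changes of the real function $f$ along that vertical line, anchored at $\rho$ and weighted at $\rho,\ii,\ii\infty$) and the boundary term $-\tfrac12(-1)^{m+1}r(f_1)\,s(f)$ at the cusp, where $s(f)$ encodes whether $f$ or $f_1$ controls the leading Fourier behaviour; the shift from $\lfloor k/6\rfloor$ to $\lceil k/6\rceil$ reflects that when $k\equiv 2\bmod 6$ the zero at $\rho$ is counted in this region but not the other. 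The main obstacle, I expect, is the bookkeeping of signs at the three special points $\rho,\ii,\ii\infty$ and the justification that the sign of $f$ genuinely alternates between consecutive zeros of $f_1$ on each boundary component — this requires a quantitative estimate that $|f_1 E_2|$ dominates $|f_0|$ once $\Im\tau$ is above the largest $\Im z_i$ (so no ``extra'' zeros hide high up), plus a careful local analysis at $\rho$ via the natural Taylor expansion to pin down $r(f_1)$ and the parity factor $(-1)^{v_\rho(f_1)}$.
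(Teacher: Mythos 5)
Your sketch gets the $|\lambda|\in(1,\infty]$ case essentially right: that part of the paper's proof (Proposition~\ref{prop:Ninf}) is exactly an argument-principle computation for $\widehat f$ on the arc $\mathcal{C}$, with $\Im\widehat f=\tfrac{3}{\pi}f_1$ so that the crossings of the real axis occur at the zeros $\theta_j$ of $f_1$, and the alternating sum recording the half-integer jumps of the continuous argument. But for the rest of the theorem there is a genuine gap: you never introduce the equivariant function $h(\tau)=\tau+\tfrac{12}{2\pi\ii}\tfrac{f_1(\tau)}{f(\tau)}$, and without it (or an equivalent) your plan does not go through. The whole point is that $f$ vanishes at $\gamma\tau$ with $\tau\in\mathcal{F}$ iff $h(\tau)=\lambda(\gamma)$, so $N_\lambda(f)$ is the winding number of $h-\lambda$ along $\partial\mathcal{F}$; the fact that the only breakpoints are $|\lambda|=\tfrac12,1,\infty$ is \emph{not} a consequence of \cref{thm:main1} alone (which only gives finitely many intervals) but of the equivariance $h(\gamma\tau)=\gamma h(\tau)$, which forces $|h(z)|=1$ on $|z|=1$ and $h(-\tfrac12+\ii t)\in-\tfrac12+\ii\r$, so that $h(\partial\mathcal{F})$ meets $\r$ only at $\pm\tfrac12,\pm1,\infty$ (\cref{lem:equivariance}, \cref{cor:5.2}). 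Your heuristic that ``the pole $\tau=\lambda$ of $E_2|\gamma$ enters or does not enter the region'' cannot replace this: $\lambda$ is real and never lies in $\gamma\mathcal{F}\cap\mathfrak{h}$, so nothing literally crosses the boundary as $\lambda$ varies.

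Concretely, the identities \eqref{eq:N1/2} and \eqref{eq:N0} are obtained in the paper by computing the \emph{pair} $N_\lambda(f)+N_{-1/\lambda}(f)$ via $\tfrac{1}{2\pi\ii}\oint_{\partial\mathcal{F}}\bigl(\tfrac{h'}{h-\lambda}+\tfrac{h'}{h+1/\lambda}\bigr)\dd\tau$ (\cref{thm:5.3}): on $\mathcal{C}$ the product $(h-\lambda)(h+\tfrac1\lambda)$ is rewritten via \eqref{eq:prodh} as $(\tau-\lambda)(\tau+\tfrac1\lambda)\,f(\gamma\tau)f(\gamma S\tau)/f(\tau)^2$, yielding $\tfrac{k-1}{6}-2N_\infty(f)$, and on the vertical sides one does a case analysis of the oriented angles subtended at $\lambda$ and $-\tfrac1\lambda$ by $h(\rho)\in\{\pm\rho\}$ and $h(-\tfrac12+\ii\infty)$, which is precisely where the dichotomy $1<|\lambda|<2$ versus $|\lambda|>2$ (equivalently $|-1/\lambda|\in(\tfrac12,1)$ versus $[0,\tfrac12)$) and hence $\lfloor k/6\rfloor$ versus $\lceil k/6\rceil$ plus the $z_j$- and $s(f)$-corrections arises. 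Your proposed derivation of \eqref{eq:N1/2} from ``the valence formula applied to $f_0f_1$'' does not produce this relation, and your $[0,\tfrac12)$ discussion asserts the extra terms rather than deriving them. Two smaller corrections: the alternation encoded by $(-1)^j$ is that of $\sgn f_1$ at its consecutive zeros $z_j$ (automatic for a real-analytic function at odd-order zeros), not of $\sgn f$; and the dominance estimate $|f_1E_2|\gg|f_0|$ for large $\Im\tau$ is not needed -- zeros of $f$ on $\mathcal{L}$ are poles of $h$ and are handled by indenting the contour with small semicircles, whose contributions on $\mathcal{L}$ and $\mathcal{R}$ cancel in pairs.
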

This result is proved later as a consequence of \cref{prop:Ninf} and \cref{thm:5.3}, which have similar arguments. However, \cref{thm:5.3} is technically more complicated.

\clearpage
\begin{remark}\mbox{}\\[-18pt]
\begin{enumerate}[{\upshape(i)}]
\itemsep0pt
\item 
Observe that the conditions of the theorem guarantee that the sign function is applied to a non-zero real number, that is, $\widehat{f}(\theta_j),f(z_j)\in \r^*$. 
\item In case~$f_0$ and~$f_1$ do admit common zeros, there always exists a modular form~$g$ such that~$f_0/g$ and~$f_1/g$ are (holomorphic) modular forms without common zeros (see \cref{rk:irreducible1} on page~\pageref{rk:irreducible1}).
\item For quasimodular forms of depth $>1$ (i.e., if~$f$ is a polynomial in~$E_2$ of degree~$>1$ with modular coefficients), the first part of the statement is wrong. This has already been illustrated with the example 
in~\eqref{eq:E2'}; for more details, see \cref{ex:criticalE2}. 
\qedhere
\end{enumerate} 
\end{remark}

\paragraph{An extreme example} %A gap QMF
To illustrate some of the characteristic properties of zeros of quasimodular forms, consider the unique quasimodular form~$f=f_0+f_1E_2$ in the~$7$-dimensional vector space $M_{36}^{\leq 1}$ with $q$-expansion $f=1+O(q^7)$
(as the constant coefficient is $1$, the quasimodular form~$f$ cannot be the derivative of a modular form)\footnote{Following \cite{JP13} one could call this a \emph{gap quasimodular form}. The theta series of an extremal lattice is a gap modular form. Do gap quasimodular forms have a similar interpretation?}. Explicitly, $f$ equals
\[\small 1 \+ 212963830173619200 q^7 \+ 45122255555990230800 q^8 \+ 
 3920264199663225523200 q^9 \+ O(q^{10}). \] \vspace{-25pt}
\begin{figure}[!hbtp] 
\caption{\small Zeros of the unique quasimodular form of weight~$36$, depth~$1$ and of the form $1+O(q^7)$.}\vspace{2pt}\setcounter{figure}{0}
  \begin{subfigure}[b]{0.49\textwidth}
    \includegraphics[width=\textwidth]{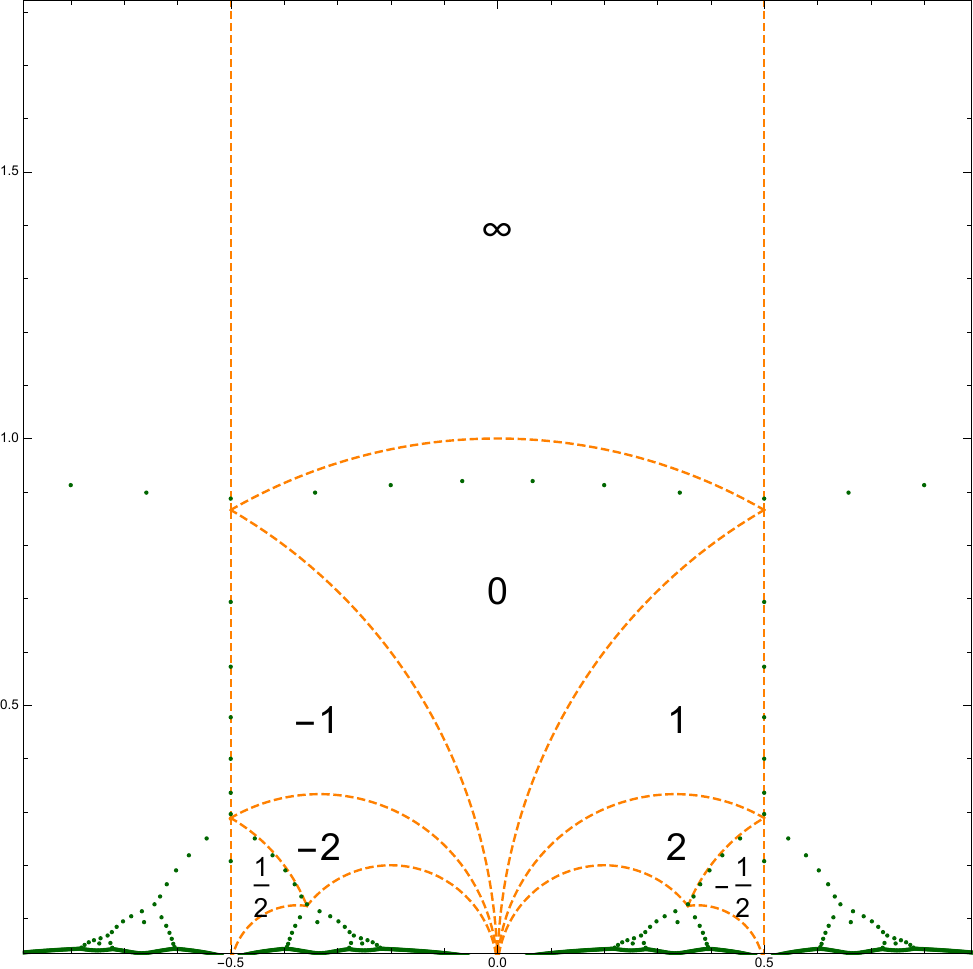}
    \caption{The approximate location of 1000 zeros of~$f$ and fundamental domains~$\gamma\mathcal{F}$ for ${\lambda(\gamma)=0,\pm \frac{1}{2}, \pm {1},\infty}$.}
    \label{fig:zerosa}
  \end{subfigure}
  \hfill
  \begin{subfigure}[b]{0.49\textwidth}
    \includegraphics[width=\textwidth]{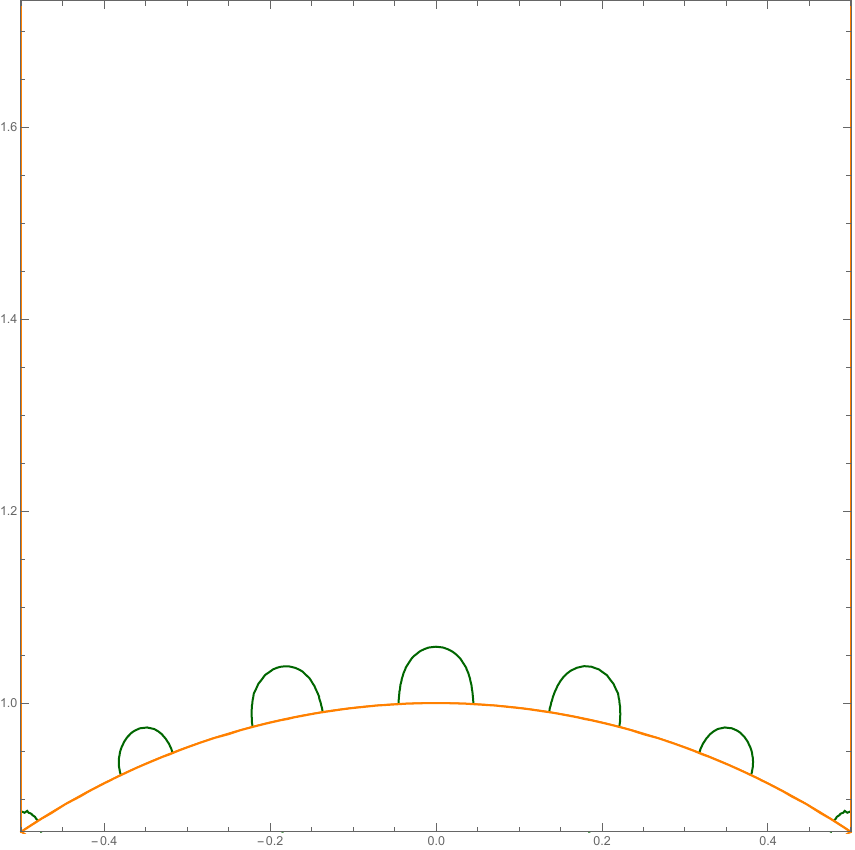}
    \caption{The curves~\eqref{eq:curves} associated to~$f$.\\\phantom{ }\\\phantom{ }}
    \label{fig:zerosb}
  \end{subfigure}
  \label{fig:zeros}
\end{figure}

The zeros of~$f$, depicted in \cref{fig:zerosa}, satisfy
\[ N_{(1,\infty]}=1,\qquad N_{(\frac{1}{2},1)}=5,\qquad N_{[0,\frac{1}{2})}(f)=6.\]
Moreover, in \cref{fig:zerosb}, we depicted the rational curves
\begin{equation}\label{eq:curves}\{\gamma z \mid \gamma \in \sltwoz, f(z)=0\} \cap \mathcal{F}. \end{equation}
By \cref{cor:meromorphic}\eqref{cor:meromorphic(iii)} these curves~\eqref{eq:curves} are given by $\{ z\in \mathcal{F} \mid h(z)\in \q\}$, where the function~$h:\mathfrak{h}\to \c$ is given by
\[ h(\tau) \= \tau + \frac{12}{2\pi i}\frac{f_1(\tau)}{f(\tau)}. \]
In fact,~$h$ is an equivariant function, i.e.,
\[ h(\tau+1) = h(\tau)+1,\qquad h\Bigl(-\frac{1}{\tau}\Bigr) = -\frac{1}{h(\tau)}, \qquad h(-\overline{\tau}) = -\overline{h(\tau)}.\]
For other quasimodular forms of depth~$1$ the corresponding function~$h$ is also equivariant, and these transformation properties are the main ingredients for \cref{thm:main2}.

\paragraph{Extremal quasimodular forms} 
Write $\widetilde{M}_k^{\leq p}$ is the space of holomorphic quasimodular forms of weight~$k$ and depth~$\leq p$. Recall that $\widetilde{M}_4^{\leq 1}=M_4=\c E_4$ and $E_4$ has a unique zero at $\rho$, so that $N_\lambda(E_4)=\frac{1}{3}$. Excluding this modular form, we find the following upper bound.
\begin{cor}\label{cor:upperbound}
For all $f\in \widetilde{M}^{\leq 1}_{k}$ such that $\frac{f}{E_4}\not\in \widetilde{M}^{\leq 1}_{k-4}$, we have
\[N_\lambda(f) \:\leq\: \dim \widetilde{M}^{\leq 1}_{k} \+ \begin{cases} -1 & \lambda\in (\frac{1}{2},\infty] \\
0 & \lambda \in [0,\frac{1}{2}).\end{cases} \]
\end{cor}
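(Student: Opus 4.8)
The plan is to obtain the bound directly from \cref{thm:main2}, after two reductions, using essentially only that each of $N_{(1,\infty]}(f)$, $N_{(\frac12,1)}(f)$, $N_{[0,\frac12)}(f)$ is non-negative (being a weighted count of zeros), together with the identities \eqref{eq:N1/2} and \eqref{eq:N0}. First a dimension count: since $M_\bullet=\c[E_4,E_6]$ and a quasimodular form of depth $\leq 1$ is uniquely of the form $f_0+f_1E_2$ with $f_0\in M_k$, $f_1\in M_{k-2}$, one has $\dim\widetilde M^{\leq1}_k=\dim M_k+\dim M_{k-2}$, with generating series $\frac{1+x^2}{(1-x^4)(1-x^6)}=\frac1{(1-x^2)(1-x^6)}$; hence $\dim\widetilde M^{\leq1}_k=\lfloor k/6\rfloor+1$ for all even $k\geq0$. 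So the claim is $N_\lambda(f)\leq\lfloor k/6\rfloor$ for $|\lambda|\in(\tfrac12,\infty]$ and $N_\lambda(f)\leq\lfloor k/6\rfloor+1$ for $|\lambda|\in[0,\tfrac12)$.

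For the reductions, write $f=f_0+f_1E_2$. If $f_1=0$, then $f\in M_k$ and $N_\lambda(f)=k/12$ by the valence formula \eqref{eq:vf}; the hypothesis (that $E_4\nmid f$) rules out $k=4$, and $k/12\leq\lfloor k/6\rfloor$ for every even $k\neq2,4$, which settles this case. If instead $f_0$ and $f_1$ share a common zero, let $g$ be a modular form of maximal weight dividing both (which may be taken with real coefficients, as $f_0,f_1$ are); then $f=g\tilde f$ with $\tilde f=\tilde f_0+\tilde f_1E_2$, the modular forms $\tilde f_0=f_0/g$, $\tilde f_1=f_1/g$ coprime and with real coefficients, and $N_\lambda(f)=\tfrac{\wt g}{12}+N_\lambda(\tilde f)$. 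The hypothesis forces $E_4\nmid g$, so (using $M_2=0$, $M_8=\c E_4^2$, $M_{10}=\c E_4E_6$) $\wt g\in\{0,6\}\cup\{12,14,16,\dots\}$, and a short check modulo $6$ gives $\tfrac{\wt g}{12}+\lfloor(k-\wt g)/6\rfloor\leq\lfloor k/6\rfloor$ in every such case, which handles both ranges at once. Thus we may assume $f_0,f_1$ coprime, so that \cref{thm:main2} applies.

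The range $|\lambda|\in(\tfrac12,\infty]$ is then immediate: there $N_\lambda(f)$ equals $N_{(1,\infty]}(f)$ or $N_{(\frac12,1)}(f)$, and by \eqref{eq:N1/2} these are non-negative integers summing to $\lfloor k/6\rfloor$, hence each is $\leq\lfloor k/6\rfloor=\dim\widetilde M^{\leq1}_k-1$. For $|\lambda|\in[0,\tfrac12)$ we have $N_\lambda(f)=N_{[0,\frac12)}(f)$, and \eqref{eq:N0} together with $N_{(1,\infty]}(f)\geq0$ gives
\[
N_{[0,\frac12)}(f)\;\leq\;\Bigl\lceil\tfrac k6\Bigr\rceil\;-\;r(f_1)\Bigl(\sum_{j=0}^{m}\tfrac{(-1)^j}{w(z_j)}\sgn f(z_j)+\tfrac12(-1)^{m+1}s(f)\Bigr);
\]
since $\lceil k/6\rceil\leq\lfloor k/6\rfloor+1$, it remains to bound the bracketed correction from below (by $-1$, resp.\ by $0$ when $6\nmid k$, after spending a little more of $N_{(1,\infty]}(f)\geq0$).

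This last estimate is the main obstacle. The plan for it is an interlacing argument on the left edge $E=\{\Re\tau=-\tfrac12,\ \Im\tau\geq\tfrac{\sqrt3}2\}$ of $\mathcal F$. Because $f$ has real Fourier coefficients and is $1$-periodic, $f$ and $f_1$ are real-valued on $E$; away from the zeros $z_1,\dots,z_m$ of $f_1$ on $E$ the meromorphic function $f_0/f_1$ is finite (by coprimality), and between two consecutive such zeros it runs from $\pm\infty$ to $\mp\infty$, so it meets $-E_2$ an odd number of times. This pins down the sign pattern $\sgn f(z_j)$---so that the alternating sum in the bracket above cannot oscillate against itself---and at the same time forces $f$ to have at least $m$ zeros on $E$, whence $N_{(1,\infty]}(f)\geq m$; combined with the positivity of $E_2$ on the boundary of $\mathcal F$ (in particular $E_2(\rho)>0$), which controls the $j=0$ term $\tfrac12\sgn f(\rho)$ and the cusp term $\tfrac12 s(f)$, this closes the bound. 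The residual work is the bookkeeping at $\rho$, at the cusp, and for zeros of $f_1$ of higher multiplicity --- most of which is already available from the proof of \cref{thm:main2}.
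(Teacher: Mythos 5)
Your reductions and your treatment of the range $|\lambda|\in(\tfrac{1}{2},\infty]$ are correct, and that half is in fact handled more simply than in the paper: the paper bounds $N_{(1,\infty]}(f)$ directly from \eqref{eq:Ninf} by estimating each sign by $1$ and then invoking the valence formula for $f_1$ to control the number $n'$ of its zeros on the arc, whereas you get both bounds at once from the non-negativity of the two zero counts together with the identity \eqref{eq:N1/2}. That part stands as written.

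The range $|\lambda|\in[0,\tfrac{1}{2})$, however, contains a genuine gap, precisely where you flag "the main obstacle". Your plan is to discard $N_{(1,\infty]}(f)\geq 0$ (or spend only "a little" of it) and to show that the correction $-r(f_1)\bigl(\sum_{j=0}^{m}\tfrac{(-1)^j}{w(z_j)}\sgn f(z_j)+\tfrac{1}{2}(-1)^{m+1}s(f)\bigr)$ is at most $1$. This is false in general: the correction can be as large as roughly $m$, the number of zeros of $f_1$ on $\mathcal{L}$. Concretely, take $k\equiv 0\pmod{12}$ and $g=\Delta^{k/12}\prod_{i}(j-c_i)$ with distinct $c_i<0$, so that all $k/12$ zeros of $g$ are simple and lie on $\mathcal{L}$; for $f=g'$ one has $f_1=\tfrac{k}{12}g$, $m=k/12$, and by \eqref{eq:N0} combined with \cref{thm:crit} the correction equals $N_{[0,\frac{1}{2})}(f)+N_{(1,\infty]}(f)-\lceil\tfrac{k+2}{6}\rceil=\tfrac{k}{6}+\tfrac{k}{12}-\tfrac{k}{6}-1=\tfrac{k}{12}-1$, which is unbounded. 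So you must retain a comparably large lower bound on $N_{(1,\infty]}(f)$, and the interlacing argument you sketch does not supply one: between consecutive simple zeros $t_j<t_{j+1}$ of $f_1$ on $\mathcal{L}$ the function $f_0/f_1$ tends to $\pm\infty$ with signs $s\cdot\sgn f_0(t_j)$ and $s\cdot\sgn f_0(t_{j+1})$ (where $s$ is the constant sign of $f_1$ on the gap), and these are opposite only when $f_0$ changes sign there, which nothing forces; hence $f$ need not vanish in each gap, the sign pattern of $f(z_j)$ is not pinned down, and $N_{(1,\infty]}(f)\geq m$ is not established (nor need the relevant zeros of $f$ lie on $\mathcal{L}$ at all). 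The missing ingredient is the valence formula for $f_1$ itself: $f_1$ has exactly $\tfrac{k-2}{12}$ weighted zeros in $\mathcal{F}$, whence $n'+m\leq\tfrac{k-2}{12}-\tfrac{1}{3}\delta_{k\equiv 0\,(6)}$. The paper combines the lower bound $N_{(1,\infty]}(f)\geq\tfrac{1}{2}\lfloor\tfrac{k}{6}\rfloor-n'-\tfrac{1}{2}\delta_{k\equiv 0\,(6)}$ coming from \eqref{eq:Ninf} with the upper bound $N_{(1,\infty]}(f)+N_{[0,\frac{1}{2})}(f)\leq\lceil\tfrac{k}{6}\rceil+m+\delta_{k\equiv 0\,(6)}$ coming from \eqref{eq:N0}, so that $n'$ and $m$ recombine into the fixed total $\tfrac{k-2}{12}$ and the numerology closes; without this global constraint the two boundary sums in \eqref{eq:Ninf} and \eqref{eq:N0} cannot be controlled simultaneously.
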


Observe that in any vector subspace of~$\c\llbracket q \rrbracket$ of dimension~$m$, there exists an element~$f$ with $v_{\ii \infty}(f)\geq m-1$. Hence, there exists a quasimodular form~$f$ such that (i)~the inequality~\eqref{cor:upperbound} is sharp for $\lambda=\infty$ and (ii)~$f$ admits no zeros in~$\mathcal{F}$ outside infinity.
\begin{cor}\label{cor:extreme} There exists a quasimodular form $f=f_0+f_1E_2\in \widetilde{M}^{\leq 1}_{k}$ such that 
\[N_\infty(f) \=v_{\ii\infty}(f) \= \dim\widetilde{M}^{\leq 1}_{k}-1\]
and all zeros of~$f_1$ in~$\mathcal{F}$ are located on the unit circle. 
\end{cor}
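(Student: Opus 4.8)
The plan is to start from the extremal quasimodular form furnished by the discussion preceding the statement and to read off the location of the zeros of its depth-one part~$f_1$ from the closed formula~\eqref{eq:Ninf} of \cref{thm:main2}, comparing the resulting identity with the valence formula for~$f_1$.

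First I would fix $f=f_0+f_1E_2\in\widetilde M_k^{\le1}$ as in that discussion, so that $v_{\ii\infty}(f)=N_\infty(f)=\dim\widetilde M_k^{\le1}-1$ and $f$ has no zero in~$\mathcal F$ outside the cusp (the trivial case $\widetilde M_k^{\le1}=\c E_4$ being excluded throughout). Since $f$ has no zero on $\mathcal F\setminus\{\ii\infty\}$, the modular forms~$f_0$ and~$f_1$ have no common zero there; nor do they at the cusp, for if both were cuspidal then $f/\Delta\in\widetilde M_{k-12}^{\le1}$ would have $v_{\ii\infty}(f/\Delta)=\dim\widetilde M_k^{\le1}-2=\dim\widetilde M_{k-12}^{\le1}$, contradicting \cref{cor:upperbound} (after dividing out a power of~$E_4$ if needed). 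Hence \cref{thm:main2} applies to~$f$, and since $f_0$ and~$f_1$ are not both cuspidal whereas $f_0(\ii\infty)=-f_1(\ii\infty)$, the form~$f_1$ is not cuspidal, so all its zeros in~$\mathcal F$ lie in~$\mathfrak h$.

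Next I would substitute $N_{(1,\infty]}(f)=\dim\widetilde M_k^{\le1}-1$ into~\eqref{eq:Ninf}, which yields
\[
\Bigl|\sum_{j=1}^n\frac{(-1)^j}{w(e^{\ii\theta_j})}\,\sgn\widehat f(\theta_j)\Bigr|\=B,\qquad\text{where}\qquad B\defis\dim\widetilde M_k^{\le1}-1-\tfrac12\bigl\lfloor\tfrac k6\bigr\rfloor .
\]
Because $\widehat f(\theta)$ depends only on~$\theta$, the indices~$j$ for which $\theta_j$ equals a fixed zero of~$f_1$ of order~$\mu$ form a block of $\mu$ consecutive terms with a common value of $\sgn\widehat f$, so such a block contributes~$0$ to the sum if $\mu$ is even and at most $1/w(e^{\ii\theta_j})$ in absolute value if $\mu$ is odd. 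Adding these block estimates and invoking the valence formula $\sum_{\tau\in\mathcal F}v_\tau(f_1)/e_\tau=(k-2)/12$ for $f_1\in M_{k-2}$ --- with $p$ the number of zeros of~$f_1$ in the interior of~$\mathcal F$ and $m$ the number on the segment $\Re\tau=-\tfrac12$, $\Im\tau>\tfrac12\sqrt3$ --- I expect to obtain
\[
p+m\:\le\:\frac{k-2}{12}+\frac16-B,
\]
the stray~$\tfrac16$ arising only because a simple zero at~$\rho$ consumes~$\tfrac12$ of the sum above but merely~$\tfrac13$ of the valence, while every other configuration (higher multiplicity at~$\rho$, a zero at~$\ii$, a zero elsewhere on the unit arc) is at least as favourable.

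Finally I would verify that $B>\frac{k-12}{12}$ for every even~$k\ge6$ --- a short case check in~$k\bmod12$ using $\dim\widetilde M_k^{\le1}=\dim M_k+\dim M_{k-2}$, the margin being precisely the ``$+1$'' by which $\dim\widetilde M_k^{\le1}$ outruns the pertinent multiple of~$\tfrac16$. Then $p+m<1$, hence $p=m=0$, and all zeros of~$f_1$ in~$\mathcal F$ lie on the unit circle. The main obstacle is this last sharpness: the crude bound $|S|\le n$ is too weak, and one genuinely needs the parity refinement of the block estimate together with the exact values $w(\rho)=w(\ii)=2$ to force $p+m=0$ rather than only $p+m\le1$; the residue $k\equiv2\pmod{12}$, where $B=\tfrac{k-2}{12}$ exactly, is where the bound is tightest.
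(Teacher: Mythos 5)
Your argument is correct and is essentially the route the paper intends: the estimate in the proof of \cref{cor:upperbound} already bounds $N_{(1,\infty]}(f)$ by $\frac12\lfloor k/6\rfloor$ plus the weighted number of zeros of $f_1$ on the arc, and compares that count with the valence formula for $f_1$, so that equality $N_\infty(f)=\lfloor k/6\rfloor$ forces all zeros of $f_1$ onto the unit circle — exactly your comparison of $|S|=B$ with $\frac{k-2}{12}$. Your additions (the parity refinement of the block estimate, and the $f/\Delta$ argument ruling out a common cuspidal zero of $f_0$ and $f_1$ so that \cref{thm:main2} applies) are correct sharpenings of details the paper leaves implicit, not a different method.
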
 
The existence of a quasimodular form for which $v_{\ii\infty}(f) = \dim\widetilde{M}^{\leq 1}_{k}-1$ was proven by Kaneko and Koiko, who called such a quasimodular form \emph{extremal} \cite{KK06}. It is natural to generalize their question whether $v_{\ii\infty}(f) \leq \dim\widetilde{M}^{\leq p}_{k}-1$ for all $f\in \widetilde{M}_{k}^{\leq p}$ (which has been confirmed for $p\leq 4$ in \cite{Pel20}) to the following one.
 
\begin{question}Let $k,p>0$. Do all $f\in \widetilde{M}_{k}^{\leq p}$ with $\frac{f}{E_4}\not\in \widetilde{M}^{\leq p}_{k-4}$ satisfy
\[ N_\lambda(f) \:\leq\: \dim \widetilde{M}_{k}^{\leq p}\+ \begin{cases} -1 & \lambda\in (\frac{1}{2},\infty] \\
0 & \lambda \in [0,\frac{1}{2})\end{cases} \quad?\]
\end{question}

\paragraph{Contents} We start by recalling some basic properties of quasimodular forms in \cref{sec:set-up}. 
In \cref{sec:h} we discuss equivariant functions~$h$ associated to quasimodular forms of depth~$1$ and of higher depth, which results in the proof of \cref{thm:main1} in Section~\ref{sec:5}.  The proof of \cref{thm:main2} is obtained in Section~\ref{sec:3} (for $\lambda=\infty$) and in Section~\ref{sec:5} (for $\lambda<\infty$). We indicate how \cref{thm:crit} and \cref{cor:upperbound} then follow as corollaries of \cref{thm:main1}. 
Moreover, in all sections we give many additional examples.

\paragraph{Acknowledgements}
We would like to thank Gunther Cornelissen and Wadim Zudilin for inspiring conversations and helpful feedback, as well as Joseph Oesterlé and the anonymous reviewer for many helpful comments on a previous version of this paper. A great part of this research has been carried out in the library of the mathematical institute of Utrecht University (even when both authors were not affiliated with this university any more), as well as during a visit of the second author to the Max-Planck-Institut für Mathematik, for which we would like to thank both institutes.

\section{Set-up: zeros of quasimodular forms}\label{sec:set-up}
\paragraph{Set-up} Fix a holomorphic quasimodular form~$f$ for~$\sltwoz$, of weight~$k$ and depth~$p$, and with real Fourier coefficients, i.e., let $f\in \r[E_2,E_4,E_6]$ of homogenous weight~$k$ and depth~$p$. We write \[ f \=\sum_{j=0}^p f_j \, E_2^j \]
where~$f_j$ is a modular form of weight ${k-2j}$ and $f_p\neq 0$. 

\begin{remark}\label{rk:exp}
For all $\gamma\in\sltwoz$ we have
\begin{align}\label{eq:transfo} (f|_k\gamma)(\tau) \defis (c\tau+d)^{-k} f\Bigl(\frac{a\tau+b}{c\tau+d}\Bigr) \= \sum_{j=0}^p \frac{(\mathfrak{d}^jf)(\tau)}{j!} \Bigl(\frac{1}{2\pi \ii}\frac{c}{c\tau+d}\Bigr)^j,\end{align}
where $\mathfrak{d}$ is the derivation on quasimodular forms uniquely determined by $\mathfrak{d}(E_2) = 12$ and the fact that it annihilates modular forms (see \cite[Section~5.3]{Zag08}), i.e.,
\begin{align}\label{eq:der} \frac{\mathfrak{d}^m(f)}{m!} &\= (12)^m \sum_{j=m}^p \binom{j}{m}  f_j E_2^{j-m} \qquad (m\leq p).\qedhere\end{align}
In fact, one cannot understand the theory of quasimodular forms without recognizing the $\mathfrak{sl}_2$-action on quasimodular forms by the derivation~$D=\frac{1}{2\pi \ii}\pdv{}{\tau} = q \pdv{}{q}$, the weight derivation $W$, which multiplies a quasimodular form with its weight, and the derivation $\mathfrak{d}$, satisfying 
\[ [W, D] = 2 D,\qquad [W, \mathfrak{d}] = -2 \mathfrak{d}, \qquad
[\mathfrak{d}, D] = W. \]
\end{remark}

\begin{remark} Restricting to quasimodular forms with real Fourier coefficients isn't that restrictive, for the following two reasons:
\begin{enumerate}[{\upshape (i)}]
    \item All Hecke eigenforms for $\sltwoz$ have real Fourier coefficients;
    \item Suppose $g$ is a quasimodular with complex, rather than real, Fourier coefficients.  
    Then, $\tilde{g}(\tau) := \overline{g(-\overline{\tau})}$  
    is a quasimodular form which vanishes at~$\tau$ if $g$ vanishes at $-\overline{\tau}$. 
    Hence, $N_\lambda(g)=N_{-\lambda}(\tilde{g})$, $N_\lambda(g \tilde{g})=N_\lambda(g)+N_{-\lambda}(g)$ and $g\tilde{g}$ is a quasimodular form with real Fourier coefficients. Therefore, theorems established for quasimodular forms with real Fourier coefficients, lead to theorems for $N_\lambda(g)+N_{-\lambda}(g)$.
    \qedhere
\end{enumerate}
\end{remark}

\paragraph{The fundamental domain}
Let $\mathfrak{h}=\{z\in \c \mid \mathrm{Im}(z)>0\}$ be the complex upper half plane, $\mathfrak{h}^*=\mathfrak{h}\cup \mathbb{P}^1(\q)$ be the extended upper half plane and
 \[\mathcal{F} \defis \{z \in \mathfrak{h} \mid |z| > 1, -\tfrac{1}{2}\leq \mathrm{Re}(z) <\tfrac{1}{2}\} \,\cup\, \{z \in \mathfrak{h} \mid |z| = 1, -\tfrac{1}{2}
 \leq \mathrm{Re}(z) \leq 0\} \cup \{\ii\infty\}\]
the standard (strict) fundamental domain for the action of $\sltwoz$ on $\mathfrak{h}^*$, where $\ii\infty$ is the point $[1,0]\in \mathbb{P}^1(\q)$ at infinity. Recall that the $\sltwoz$-translates of $\rho=-\frac{1}{2}+\frac{1}{2}\sqrt{3}\, \ii$ and of $\ii$ have a non-trivial stabilizer, i.e., $e_\rho=3, e_\ii=2$ and $e_z=1$ if $z\in \mathfrak{h}^*\backslash\bigl(\sltwoz \rho\cup \sltwoz \ii\bigr)$. 

Moreover, we write $\mathcal{C}, \mathcal{L}$ and $\mathcal{R}$ for the positively oriented circular part, left vertical half-line and right vertical half-line of the boundary $\partial \mathcal{F}$ of $\mathcal{F}$, i.e., $\partial \mathcal{F} = \mathcal{L} \cup \mathcal{C} \cup \mathcal{R} \cup \{\ii\infty\}$ with
\begin{align}\label{eq:C} \mathcal{C} &\= \{ z\in \mathfrak{h} \mid |z|=1, -\tfrac{1}{2}\leq \mathrm{Re}(z)\leq \tfrac{1}{2} \},\\
\label{eq:L} \mathcal{L} &\= \{ z\in \mathfrak{h} \mid |z|\geq 1, \mathrm{Re}(z)= -\tfrac{1}{2}\},\\
\mathcal{R} &\=\{ z\in \mathfrak{h} \mid |z|\geq 1, \mathrm{Re}(z)= \tfrac{1}{2}\}.
\end{align}

\paragraph{Order of vanishing at the cusps}
Note that for a quasimodular form $f$ around $\tau_0=-\frac{d}{c}\in \mathbb{P}^1(\q)$ we have
\[ (c\tau+d)^kf(\tau) \= \sum_{n=1}^\infty a_n(f,\tau,\tau_0) \exp\Bigl(2\pi \ii n\,  \frac{a\tau+b}{c\tau+d}\Bigr),\]
where $a,b\in \z$ are such that $\left(\begin{smallmatrix} a & b \\ c &d \end{smallmatrix}\right)\in \sltwoz$, and with
\[ a_n(f,\tau,\tau_0) \= \sum_{j=0}^p \frac{a_{n,j}}{j!} \Bigl(\frac{-c(c\tau+d)}{2\pi\ii}\Bigr)^j \,\in\, \c[\tau],\]
where~$a_{n,j}$ is the $n$th Fourier coefficient of~$\mathfrak{d}^jf$. 
We define the order of vanishing as follows.

\begin{defn}\label{def:vtau}
 For a quasimodular form~$f$ and $\tau_0 \in \mathfrak{h}$, let $\nu_{\tau_0}(f)$ be the order of vanishing of $f$ at $\tau_0$. If $\tau_0 \in \mathbb{P}^1(\q)$, we let
$\nu_{\tau_0}(f)$ be the minimal value of~$n$ for which $a_n(f,\tau,\tau_0) \in \c[\tau]$ is not the zero polynomial.
\end{defn}
\begin{remark}
Equivalently, for a cusp $\tau_0$ which is not the cusp at infinity we have
\begin{align} \nu_{\tau_0}(f) = \min(\nu_{\ii\infty}(f_0),\ldots,\nu_{\ii\infty}(f_p)). & \qedhere\end{align}
\end{remark}

\paragraph{The counting function}
\begin{defn} Given $\lambda\in \mathbb{P}^1(\q)$, denote by $N_\lambda(f)$ the weighted number of zeros of $f$ in $\gamma\mathcal{F}$, where $\gamma=\left(\begin{smallmatrix} a & b \\ c & d\end{smallmatrix}\right) \in \sltwoz$ and $-\frac{d}{c}=\lambda$, i.e.,
\begin{equation} N_\lambda(f) \= \sum_{\tau \in \gamma\mathcal{F}} \frac{\nu_\tau(f)}{e_\tau},\end{equation}
where $\nu_\tau(f)$ is defined by \cref{def:vtau}. 
\end{defn}
Observe that as $f(\tau+1)=f(\tau)$, the weighted number of zeros in $\left(\begin{smallmatrix} 1 & 1 \\ 0 & 1\end{smallmatrix}\right)\mathcal{F}$ and $\mathcal{F}$ agree. Hence, after fixing a rational number $\lambda=-\frac{d}{c}$ with $c,d$ coprime integers, for all possible choices $a,b\in \z$ such that $ad-bc=1$ the weighted number of zeros in $\left(\begin{smallmatrix} a & b \\ c & d\end{smallmatrix}\right)\mathcal{F}$ agree. 

Without loss of generality, we often restrict to irreducible quasimodular forms:
\begin{defn}
A quasimodular form is \emph{irreducible} if it cannot be written as the product of two quasimodular forms of strictly lower weights. 
\end{defn}

\begin{remark}\label{rk:irreducible1}
If $f$ is a quasimodular form $f=f_0+f_1E_2$ of depth~$1$, then $f$ is irreducible if and only if $f_0$ and $f_1$ have no common zeros. As $f_0$ and $f_1$ are modular, they have a common zero if and only if they have a modular form as common factor. If $z\in \mathcal{F}$ is a common zero, then this modular form is given by
%\begin{itemize}
    %\item
    $E_4$ if $z=\rho$,
    %\item 
    $E_6$ if $z =i$,
    %\item 
    $\Delta$ if $z$ is at the cusp at infinity, and 
   % \item 
   $\Delta(j-j(z))$, where $j$ is the modular $j$-invariant, else. \qedhere
%\end{itemize}
\end{remark}
\begin{remark}\label{rk:irreducible}
Suppose that $f$ is quasimodular with \emph{algebraic} Fourier coefficients. As noted by Gun and Oesterl\'e, if $a\in \mathfrak{h}$ is a zero of~$f$, there exists an irreducible factor $g$ of $f$, unique up to multiplication by a scalar, such that $g$ has a single zero in $a$ \cite[Corollary~2]{GO20}. Hence, if $f$ is irreducible, it has only single zeros. Moreover, if~$f$ has a zero at $\mathrm{i}$ or $\rho$ (or one of their $\sltwoz$-translates), then it has $E_6$ or $E_4$ respectively as one of its factors. In particular, if~$f$ is an irreducible quasimodular form with algebraic Fourier coefficients, then
\begin{align} N_\lambda(f) = \sum_{\tau \in \gamma \mathcal{F}} \nu_\tau(f) \:\in\: \z_{\geq 0}\, 
\end{align}
if $f$ is not a modular form.
\end{remark}

\paragraph{Local behaviour of modular forms around~$\rho$}
Recall $\rho=-\frac{1}{2}+\frac{1}{2}\sqrt{3}\,\ii$. Let $g$ be a modular form of weight~$k$ \emph{with real Fourier coefficients}. Note that the mapping $w\mapsto \frac{\rho-\overline{\rho}w}{1-w}$ maps the unit disc to~$\mathfrak{h}$. 
Then, the natural Taylor expansion of $g$ on $\mathfrak{h}$ (see \cite[Proposition~17]{Zag08}) around $\tau=\rho$ is given by
\[ (1-w)^{-k} \,g\Bigl(\frac{\rho-\overline{\rho}w}{1-w}\Bigr) \= \sum_{n=\nu_\rho(g)}^\infty b_n(g) \, w^n \qquad (|w|<1),\]
for some $\nu_\rho(g)\geq 0$ and coefficients $b_n(g)\in \mathbb{R}$ with $b_{\nu_\rho(g)}\neq 0$. (This Taylor expansion is natural as the image of $w \mapsto \frac{\rho-\overline{\rho}w}{1-w}$ for $|w|<1$ equals the full domain~$\mathfrak{h}$ on which $g$ is holomorphic.) Alternatively, $g$ admits an ordinary Taylor expansion
${g(z) = \sum_{n=\nu_\rho(g)}^\infty c_n(g) \, (2\pi\ii)^n(z-\rho)^n}$ (with $|z|$ sufficiently small, and for the same value of $\nu_\rho(g)$) and for some coefficients $c_n(g)\in \c$ with $c_{\nu_\rho(g)}(g)\neq 0$. Let 
\begin{equation}\label{eq:r} r(g) \defis \sgn b_{\nu_\rho(g)}(g).\end{equation}
In the sequel, we need the following relation between $r(g)$ and the limiting behaviour of $g$ on the boundary of $\mathcal{F}$.
\begin{lem}\label{lem:r} Let $g$ be a modular form of weight~$k$ with real Fourier coefficients. Then, for all $t\in \r_{>0}$ and $0<\theta<\pi$ the values of $g(\rho+\ii t)$ and $e^{k\ii\theta/2}g(e^{\ii \theta})$ are real. 
Moreover, 
\[ \lim_{t\downarrow 0} \sgn(g(\rho+\ii t)) \= r(g) \= (-1)^{\nu_\rho(g)}\sgn(c_{\nu_\rho(g)}) \=  (-1)^{\nu_\rho(g)} \lim_{\theta\uparrow2\pi/3} \sgn(e^{k\ii\theta/2}g(e^{\ii \theta})), \]
where $r(g)$ is defined by~\eqref{eq:r} and $c_n$ are the Taylor coefficients of $g$ around $\rho$ as above.
\end{lem}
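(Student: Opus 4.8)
The plan is to establish the three reality claims first, then identify all four quantities in the displayed chain of equalities. For the reality of $g(\rho+\ii t)$, I would use the fact that $g$ has real Fourier coefficients, so $\overline{g(\tau)} = g(-\overline{\tau})$; the vertical geodesic $\rho + \ii t$ (more precisely the arc $\Re(z) = -\tfrac12$) is stabilized up to $\sltwoz$-action by the composition of $\tau \mapsto \tau + 1$ and $\tau \mapsto -\overline{\tau}$, which sends $-\tfrac12 + \ii s$ to itself. Tracking the automorphy factor of weight $k$ through this symmetry forces $g(\rho + \ii t) \in \r$. Similarly, the unit-circle arc $e^{\ii\theta}$ is preserved by $z \mapsto -1/z$ composed with $z \mapsto -\overline{z}$, i.e. $z \mapsto -\overline{1/z}$, which fixes $e^{\ii\theta}$; pushing this through the weight-$k$ transformation law gives that $\tau^{k/2} g(\tau)$ — equivalently $e^{k\ii\theta/2} g(e^{\ii\theta})$ with the natural branch — is real on the arc.

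Next I would connect $r(g)$ to the boundary limit at $\rho$ along $\mathcal{L}$. Writing $w = w(z) = \frac{z-\rho}{z-\overline{\rho}}$ for the inverse of the given disc parametrization, a short computation shows that as $z = \rho + \ii t$ with $t \downarrow 0$, the parameter $w$ tends to $0$ along the \emph{positive real axis} (one checks $w(\rho + \ii t)$ is a positive real multiple of $t$ to leading order, using $\rho - \overline{\rho} = \sqrt{3}\,\ii$). Since $(1-w)^{-k} g\bigl(\tfrac{\rho - \overline{\rho}w}{1-w}\bigr) = \sum_{n \geq v_\rho(g)} b_n(g) w^n$ and $(1-w)^{-k} \to 1$, the sign of $g(\rho + \ii t)$ for small $t$ equals the sign of $b_{v_\rho(g)}(g) w^{v_\rho(g)}$, which is $\sgn b_{v_\rho(g)}(g) = r(g)$ since $w > 0$. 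This proves the first equality.

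For the middle equality $r(g) = (-1)^{v_\rho(g)} \sgn c_{v_\rho(g)}(g)$, I would compare the two Taylor expansions directly: the change of variables between $w$ and $2\pi\ii(z-\rho)$ is analytic and invertible near $0$, and its leading linear coefficient is $w/(2\pi\ii(z-\rho)) \to \frac{1}{\rho - \overline{\rho}} = \frac{1}{\sqrt{3}\,\ii}$ as $z \to \rho$. Raising to the power $v_\rho(g)$ and reading off leading coefficients gives $b_{v_\rho(g)}(g) = (\sqrt{3}\,\ii)^{-v_\rho(g)} c_{v_\rho(g)}(g)$ up to the $(1-w)^{-k}$ factor which is $1$ to leading order; since $b_{v_\rho(g)}(g)$ is real (it is a limit of real values by the first equality) this pins down $\sgn c_{v_\rho(g)}(g)$ modulo the sign ambiguity $(\sqrt3\,\ii)^{-v_\rho(g)}$, whose sign contribution I would need to extract carefully — this bookkeeping of powers of $\ii$ is where the $(-1)^{v_\rho(g)}$ comes from. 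Finally, the last equality: along $\mathcal{C}$, parametrize $z = e^{\ii\theta}$ and let $\theta \uparrow 2\pi/3$ so $z \to \rho$. Here I would use the ordinary Taylor expansion $g(z) = \sum_n c_n(g)(2\pi\ii)^n(z-\rho)^n$, note $e^{k\ii\theta/2} = (e^{\ii\theta})^{k/2}$ with the natural branch near $\rho$, and compute the sign of $(z - \rho)^{v_\rho(g)}$ as $z$ approaches $\rho$ along the unit circle from the $\Re(z) < -\tfrac12$... rather, from inside the arc: the tangent direction to the circle at $\rho$ picks up a fixed phase relative to the radial direction at $\rho$ used in the $\mathcal{L}$-limit, and this phase, raised to the $v_\rho(g)$ power and combined with the weight factor $e^{k\ii\theta/2}$, accounts for the extra $(-1)^{v_\rho(g)}$ relative to $r(g) = (-1)^{v_\rho(g)}\sgn c_{v_\rho(g)}$, i.e. restores the sign to $\sgn c_{v_\rho(g)} \cdot (-1)^{v_\rho(g)} \cdot (-1)^{v_\rho(g)}$...

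\textbf{Main obstacle.} The genuine difficulty is not any single limit but the consistent tracking of branch choices and powers of $\ii$ (equivalently, of the angles $\arg(z-\rho)$ along $\mathcal{L}$ versus along $\mathcal{C}$, and of the half-integral weight factor $e^{k\ii\theta/2}$) so that all the signs $(-1)^{v_\rho(g)}$ land in exactly the right places and the claimed string of four equalities closes up; I would organize this by fixing once and for all the local coordinate $2\pi\ii(z-\rho)$, expressing both boundary approaches as rays in that coordinate with explicitly computed arguments ($0$ for the $\mathcal{L}$-ray after accounting for $\rho - \overline\rho = \sqrt3\,\ii$, and angle $-\pi/3$ or $+\pi/3$ for the $\mathcal{C}$-ray since the unit circle meets the vertical line $\Re = -\tfrac12$ at $60^\circ$ at $\rho$), and then simply reading leading terms.
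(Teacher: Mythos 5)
Your overall strategy coincides with the paper's: reality via the real Fourier coefficients and the $S$-transformation, the limit along $\mathcal{L}$ via the disc coordinate $w$ (the paper makes your ``positive real multiple'' exact: $w=\tfrac{t}{\sqrt{3}+t}$ sends $\rho+\ii t$ to $\tfrac{\rho-\overline{\rho}w}{1-w}$), and the limit along $\mathcal{C}$ via the tangent direction at $\rho$. But two steps do not close as written. First, the relation $b_{v_\rho(g)}=(\sqrt{3}\,\ii)^{-v_\rho(g)}c_{v_\rho(g)}$ cannot be correct: both sides are real, yet $\ii^{-v_\rho(g)}$ is imaginary for odd $v_\rho(g)$. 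You dropped the $2\pi\ii$ in the normalization $g(z)=\sum c_n(2\pi\ii)^n(z-\rho)^n$; since $2\pi\ii(z-\rho)=-2\pi\sqrt{3}\,\tfrac{w}{1-w}$, the correct relation is $b_{v_\rho(g)}=(-2\pi\sqrt{3})^{v_\rho(g)}c_{v_\rho(g)}$, whose sign is the desired $(-1)^{v_\rho(g)}$. The paper avoids this coefficient comparison altogether by evaluating the ordinary expansion along the same vertical ray, $g(\rho+\ii t)=\sum_n c_n(-2\pi t)^n$, so that the limit sign is $(-1)^{v_\rho(g)}\sgn c_{v_\rho(g)}$ immediately and the middle equality follows from the first.

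Second, and more seriously, the final equality is exactly where your sketch trails off, and the missing ingredient is an arithmetic input you never name. Carrying out your plan, one finds $e^{k\ii\theta/2}g(e^{\ii\theta})\sim e^{\pi\ii k/3}\,e^{2\pi\ii v_\rho(g)/3}\,(2\pi)^{v_\rho(g)}\,\bigl|\theta-\tfrac{2\pi}{3}\bigr|^{v_\rho(g)}\,c_{v_\rho(g)}$ as $\theta\uparrow 2\pi/3$, so the accumulated phase is the sixth root of unity $e^{\pi\ii(k+2v_\rho(g))/3}$. This equals $1$ only because $v_\rho(g)\equiv k\pmod 3$ --- a consequence of the valence formula, which is precisely what the paper invokes at this point --- combined with $k$ being even; without that congruence the ``fixed phase raised to the $v_\rho(g)$ power'' need not combine with $e^{k\ii\theta/2}$ to give a positive number, or even a real one. (One could alternatively deduce $k+2v_\rho(g)\equiv 0\pmod 6$ from the already-established reality of $e^{k\ii\theta/2}g(e^{\ii\theta})$ and of $c_{v_\rho(g)}$ together with the parity of $k$, but some such argument must be supplied: the sign bookkeeping you defer is not mere bookkeeping, it is the actual content of the last equality. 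Note also that in your chosen coordinate $2\pi\ii(z-\rho)$ the $\mathcal{L}$-ray points along the negative real axis, not at angle $0$.)
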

\begin{proof}
The fact that $g(\rho+\ii t)$ is real for real $t$, follows directly from the assumption that the Fourier coefficients of $g$ are real. Moreover, this assumption implies that
\begin{align}\label{eq:imghat}
    \overline{e^{k\ii\theta/2}g(e^{\ii \theta})} \= e^{-\ii k\theta/2}g\Bigl(\frac{-1}{e^{\ii \theta}}\Bigr)
    \= e^{-\ii k\theta/2} e^{\ii k\theta}  g(e^{\ii \theta})
\end{align}
Hence, $\Im e^{k\ii\theta/2}g(e^{\ii \theta}) =0$. 

Now, note $g(\rho+\ii t)=g(\frac{\rho-\overline{\rho}w}{1-w})$ for $w=\frac{t}{\sqrt{3}+t}.$ Hence, 
\[ \lim_{t\downarrow 0} \sgn(g(\rho+\ii t)) \= \lim_{w\downarrow 0} \sgn g\Bigl(\frac{\rho-\overline{\rho}w}{1-w}\Bigr) \= \sgn(b_{\nu_\rho(g)}).\]
Also, $g(\rho+\ii t) = \sum_{n=\nu_\rho(g)}^\infty c_n(g) (-2\pi t)^n$, hence, $\sgn(b_{\nu_\rho(g)})=(-1)^{\nu_\rho(g)}\sgn(c_{\nu_\rho(g)}).$ Finally,
for the last equality, we observe that by the valence formula, we know that $g$ has order ${\nu_\rho(g)}=3\ell+\delta$ at $\rho$ for some non-negative integer~$\ell$. Here, $\delta\in \{0,1,2\}$ is the reduced value of $k\mod 3$. In particular, in all cases we find that
\[e^{k\ii\theta/2}g(e^{\ii\theta}) \sim (-2\pi)^{{\nu_\rho(g)}} (\theta -\tfrac{2 \pi }{3})^{{\nu_\rho(g)}} c_{{\nu_\rho(g)}}\]
as $\theta\uparrow 2\pi/3$. 
\end{proof}

\section{Zeros in the standard fundamental domain \texorpdfstring{$(\lambda=\infty)$}{}}\label{sec:3}
Let $f$ be a quasimodular form. In order to compute $N_\infty(f)$, we compute the contour integral of the logarithmic derivative of $f$ over the boundary of $\mathcal{F}$ (suitably adapted with small circular arcs, if $f$ has zeros on this boundary). For simplicity of exposition, assume $f$ has no zeros on the circular part of the boundary~$\mathcal{C}$. Then, by a standard argument
\begin{equation}\label{eq:Ninfty1}
N_\infty(f) \= \frac{1}{2 \pi \ii}\int_{\mathcal{C}} \frac{f'(z)}{f(z)} \,\dd z \= -\frac{1}{2\pi \ii} \int_{\frac{\pi}{3}}^{\frac{2 \pi}{3}} \frac{\dd}{\dd \theta} \log(f(e^{\ii\theta}))  \, \dd \theta.
\end{equation}
If $g$ is quasimodular of weight $k$, we define $\widehat{g}:[\frac{\pi}{3}, \frac{2 \pi}{3}]\to \c$ by
\begin{equation}
    \widehat{g}(\theta) = e^{k\ii\theta/2}g(e^{\ii\theta}).
\end{equation}
We express $N_\infty(f)$ in terms of $\widehat{f}$, as follows
\begin{equation}
  N_\infty(f) \=   -\frac{1}{2 \pi \ii} \int_{\frac{\pi}{3}}^{\frac{2 \pi}{3}} \frac{\dd}{\dd \theta} \log(e^{-k\ii\theta/2}\widehat{f}(\theta)) \, \dd \theta 
  \= \frac{k}{12} - \frac{1}{2 \pi \ii} \int_{\frac{\pi}{3}}^{\frac{2 \pi}{3}} \frac{\widehat{f}'(\theta)}{\widehat{f}(\theta)} \, \dd \theta.
\end{equation}
Since $N_\infty(f)$ is real-valued, we find
\begin{equation}
N_\infty(f) \=   \frac{k}{12} - \frac{1}{2 \pi} \Im\!\left(\int_{\frac{\pi}{3}}^{\frac{2 \pi}{3}} \frac{\widehat{f}'(\theta)}{\widehat{f}(\theta)} \, \dd \theta \right).
\end{equation}
We have the following interpretation for the latter integral. 
Write $\widehat{f}(\theta) = r(\theta) e^{2\pi \ii \alpha(\theta)}$, where $r$ and $\alpha$ are real-valued continuous functions, i.e., $r$ is the radius of $\widehat{f}$ and $\alpha$ is called the \emph{continuous argument} of~$\widehat{f}$. Recall that by assumption $f$ has no zeros on $\mathcal{C}$, so $r(\theta)>0$ for all~$\theta$.
Then,
\begin{align}\label{eq:Ninfty2} N_\infty(f) \= \frac{k}{12} - \Big(\alpha\Bigl(\frac{2\pi}{3}\Bigr)-\alpha\Bigl(\frac{\pi}{3}\Bigr)\Big).\end{align}
In order to compute \emph{the variation of the argument} $\alpha(\frac{2\pi}{3})-\alpha(\frac{\pi}{3})$, we first determine all $\theta\in [\pi/3,2\pi/3]$ for which $\alpha(\theta) \in \frac{1}{2}\z$, or equivalently, for which $\Im (\widehat{f})=0$. By making use of the assumption that our quasimodular form~$f$ has \emph{real} Fourier coefficients, we obtain:
\begin{lem}\label{lem:Imhatf}
We have
\begin{align}
    \Im (\widehat{f}) &\=  \frac{\ii}{2}
    \sum_{m\geq 1}\frac{1}{(2\pi \ii)^m m!} \widehat{\mathfrak{d}^mf} .
\end{align}
\end{lem}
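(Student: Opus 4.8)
The plan is to start from the transformation formula \eqref{eq:transfo} evaluated at the inversion $S = \left(\begin{smallmatrix} 0 & -1 \\ 1 & 0\end{smallmatrix}\right)$ and at $\tau = e^{\ii\theta}$, where $S\tau = -1/\tau = -e^{-\ii\theta}$ lies again on the unit circle. First I would record that for $\tau = e^{\ii\theta}$ we have $c\tau + d = \tau$, so $(f|_kS)(\tau) = \tau^{-k}f(-1/\tau)$, and the right-hand side of \eqref{eq:transfo} reads $\sum_{j\geq 0}\frac{(\mathfrak d^jf)(\tau)}{j!}\bigl(\frac{1}{2\pi\ii}\frac{1}{\tau}\bigr)^j$. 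Multiplying through by $\tau^{k/2} = e^{k\ii\theta/2}$ and using $\tau^{-k}\cdot\tau^{k/2} = \tau^{-k/2} = e^{-k\ii\theta/2} = \overline{e^{k\ii\theta/2}}$ (since $|\tau|=1$), the left side becomes $\overline{e^{k\ii\theta/2}}\,f(-1/\tau)$. On the other hand, because $f$ has real Fourier coefficients, $f(-1/\tau) = f(-\overline{e^{\ii\theta}}) = \overline{f(e^{\ii\theta})}$, so the left side is exactly $\overline{\widehat f(\theta)}$. Meanwhile, on the right side, each $\mathfrak d^jf$ again has real Fourier coefficients and has weight $k - 2(j - p)$... more carefully, $\mathfrak d^jf$ is quasimodular of weight $k+2j$; writing $\tau^{(k+2j)/2} = e^{(k+2j)\ii\theta/2}$ we get $\tau^{k/2}\cdot\tau^{-j} = \tau^{(k+2j)/2}\cdot\tau^{-2j}\cdot\ldots$; the clean bookkeeping is that $e^{k\ii\theta/2}(\mathfrak d^jf)(e^{\ii\theta})\tau^{-j} = e^{-j\ii\theta}\cdot e^{(k)\ii\theta/2}(\mathfrak d^jf)(e^{\ii\theta})$, and I want to repackage this as $\widehat{\mathfrak d^jf}(\theta) = e^{(k+2j)\ii\theta/2}(\mathfrak d^jf)(e^{\ii\theta})$ times a power of $e^{\ii\theta}$; tracking the exponents gives $e^{k\ii\theta/2}(\mathfrak d^jf)(e^{\ii\theta})\cdot e^{-j\ii\theta} = \widehat{\mathfrak d^jf}(\theta)$.

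Thus multiplying \eqref{eq:transfo} (at $S$, $\tau = e^{\ii\theta}$) by $e^{k\ii\theta/2}$ yields the identity
\begin{equation}
\overline{\widehat f(\theta)} \= \sum_{m\geq 0} \frac{1}{(2\pi\ii)^m\, m!}\,\widehat{\mathfrak d^mf}(\theta),
\end{equation}
where the $m=0$ term is just $\widehat f(\theta)$ itself. Subtracting $\widehat f(\theta)$ from both sides and dividing by $-2\ii$ (so that $\frac{\overline{\widehat f} - \widehat f}{-2\ii} = \Im\widehat f$), I obtain
\begin{equation}
\Im\bigl(\widehat f(\theta)\bigr) \= \frac{1}{-2\ii}\sum_{m\geq 1}\frac{1}{(2\pi\ii)^m\,m!}\,\widehat{\mathfrak d^mf}(\theta) \= \frac{\ii}{2}\sum_{m\geq 1}\frac{1}{(2\pi\ii)^m\,m!}\,\widehat{\mathfrak d^mf}(\theta),
\end{equation}
which is the claimed formula. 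Convergence of the series is not an issue: $f$ has depth $p$, so $\mathfrak d^mf = 0$ for $m > p$ by \eqref{eq:der}, and the sum is finite.

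The only genuinely delicate point is the exponent bookkeeping in the middle paragraph: one must be careful that $\widehat{\mathfrak d^mf}$ is defined with the weight $k+2m$ of $\mathfrak d^mf$ (not $k$), and check that the factor $\tau^{-m}$ coming from $(c\tau+d)^{-m} = \tau^{-m}$ in \eqref{eq:transfo} combines with $e^{k\ii\theta/2}$ to give precisely $\widehat{\mathfrak d^mf}(\theta)$ with no leftover phase. Writing $e^{k\ii\theta/2}\cdot e^{-\ii m\theta} = e^{(k+2m)\ii\theta/2}\cdot e^{-2\ii m\theta/2 - \ii m\theta}$... indeed $e^{k\ii\theta/2 - \ii m\theta} = e^{(k-2m)\ii\theta/2}$, so I should double-check: is $\widehat{\mathfrak d^mf}(\theta) = e^{(k+2m)\ii\theta/2}(\mathfrak d^mf)(e^{\ii\theta})$, in which case $e^{(k-2m)\ii\theta/2}(\mathfrak d^mf)(e^{\ii\theta}) = e^{-2\ii m\theta}\widehat{\mathfrak d^mf}(\theta)$, which is \emph{not} quite $\widehat{\mathfrak d^mf}(\theta)$. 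So I expect the resolution is that one should instead use the matrix $\gamma$ with $\gamma^{-1}(\infty)$ on the unit circle, or use that $\Im\widehat f$ only needs the statement up to terms with vanishing imaginary part, or — most likely — that the correct normalization identity from \cite[Section 5.3]{Zag08} absorbs this phase because $(\mathfrak d^mf)(e^{\ii\theta})$ itself satisfies a reality relation like \eqref{eq:imghat} making $e^{-2\ii m\theta}\widehat{\mathfrak d^mf}(\theta)$ have the same imaginary part as $\widehat{\mathfrak d^mf}(\theta)$ up to sign. Pinning down this phase correctly — equivalently, verifying directly from the series definition of $E_2$ in a small case such as $f = E_2$ — will be the main obstacle, but it is a finite check.
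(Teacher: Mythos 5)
Your route is different from the paper's and, once one bookkeeping point is fixed, it is correct and arguably cleaner. The paper proves the lemma by first computing $\overline{\widehat{g}}=\widehat{g}$ for modular $g$ and $\overline{\widehat{E_2}}=\widehat{E_2}+\frac{12}{2\pi\ii}$, then binomially expanding $\Im\widehat{E_2}^{\,j}$ and reassembling the sum over the components $f_j$ via \eqref{eq:der}. You instead apply the packaged transformation law \eqref{eq:transfo} directly at $S$ on the unit circle and use reality of the Fourier coefficients once, on the left-hand side, to get $\overline{\widehat f(\theta)}=\sum_{m\ge0}\frac{1}{(2\pi\ii)^m m!}\widehat{\mathfrak d^m f}(\theta)$; subtracting the $m=0$ term and dividing by $-2\ii$ gives the lemma. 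This avoids the $E_2$-power expansion entirely and makes the finiteness of the sum ($\mathfrak d^m f=0$ for $m>p$) transparent.

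The ``genuinely delicate point'' you flag at the end is not actually delicate --- it dissolves once you use the correct weight of $\mathfrak d^m f$. You asserted that $\mathfrak d^j f$ has weight $k+2j$; in fact $\mathfrak d$ \emph{lowers} the weight by $2$ (the paper's $\sltwo$-relations give $[W,\mathfrak d]=-2\mathfrak d$, and in \eqref{eq:der} the term $f_jE_2^{j-m}$ has weight $k-2j+2(j-m)=k-2m$). Hence
\begin{equation}
\widehat{\mathfrak d^m f}(\theta)\=e^{(k-2m)\ii\theta/2}\,(\mathfrak d^m f)(e^{\ii\theta})\=e^{k\ii\theta/2}\,\tau^{-m}\,(\mathfrak d^m f)(\tau)\qquad(\tau=e^{\ii\theta}),
\end{equation}
which is \emph{exactly} the factor $e^{k\ii\theta/2}\cdot(c\tau+d)^{-m}=e^{k\ii\theta/2}\tau^{-m}$ appearing when you multiply \eqref{eq:transfo} by $e^{k\ii\theta/2}$. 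There is no leftover phase $e^{-2\ii m\theta}$, no need to pass to another matrix $\gamma$, and no extra reality relation for $\mathfrak d^m f$ is required. With that correction your argument is complete as written.
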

\begin{proof}
First, assume~$g$ is a modular form (rather than a \emph{quasi}modular form) of homogeneous weight~$k$. 
Then, using that~$g$ has real Fourier coefficients, have
\begin{align}\label{eq:imghat}
    \overline{\widehat{g}(\theta)} = e^{-\ii k\theta/2}g\Bigl(\frac{-1}{e^{\ii \theta}}\Bigr)
    = e^{-\ii k\theta/2} e^{\ii k\theta}  g(e^{\ii \theta})
\end{align}
Hence, $\Im\widehat{g}=0$. Similarly, for $\widehat{E}_2$ one has
\begin{align}\label{eq:imE2hat}
    \overline{\widehat{E}_2(\theta)} = e^{-\ii \theta}E_2\Bigl(\frac{-1}{e^{\ii \theta}}\Bigr)
    = e^{-\ii \theta} \bigl(e^{2\ii\theta}  E_2(e^{\ii \theta})+\tfrac{12}{2\pi i}e^{\ii\theta}\bigr)
    = \widehat{E}_2(\theta) + \frac{12}{2\pi \ii}
\end{align}
Hence, 
\[ \Im \widehat{E}_2^j = \frac{\ii}{2}\sum_{m=1}^{j} \binom{j}{m}\Bigl(\frac{12}{2\pi \ii }\Bigr)^{m} \widehat{E}_2^{j-m}  \]
Applying this to the expansion
$f=\sum_{j\geq 0} f_j \, E_2^j$ and using the expansion as in \cref{rk:exp}, we find
\begin{align}
    \Im (\widehat{f}) &\= \frac{\ii}{2}
    \sum_{m\geq 1}\sum_{j\geq m} \binom{j}{m}\Bigl(\frac{12}{2\pi \ii}\Bigr)^{m} \, \widehat f_j \widehat{E}_2^{j-m} 
    \=   \frac{\ii}{2}
    \sum_{m=1}^{j}\frac{1}{(2\pi \ii)^m m!} \widehat{\mathfrak{d}^mf}. \qedhere
\end{align}
\end{proof}

\paragraph{Depth 1}
We now restrict to irreducible quasimodular forms of depth~$1$, i.e., $f=f_0+E_2f_1$. Then, by the previous lemma we have $\Im(\widehat{f}) = \frac{3}{\pi} \widehat{f_1}$. We write
\[\frac{2\pi}{3}\geq \theta_1 > \ldots >\theta_n > \frac{\pi}{3}\]
for the zeros of $\theta\mapsto f_1(e^{\ii \theta})$ on $(\frac{\pi}{3},\frac{2\pi}{3}]$, counted with multiplicity.
Recall that ${\nu_\rho(f_1)}$ denotes the order of vanishing of $f_1$ at $\rho$. 
Then, Equation~\eqref{eq:Ninf} in \cref{thm:main2} will follow from the following lemma and proposition.
\begin{lem} The mapping $\iota:\theta_j\mapsto \theta_{n-j-{\nu_\rho(f_1)}+1}$ defines an involution on $\{\theta_j \mid \theta_j\neq \frac{2\pi}{3}\}$ such that \[(-1)^j\sgn(\widehat{f}(\theta_j)) = (-1)^{n-j-{\nu_\rho(f_1)}+1} \sgn(\widehat{f}(\theta_{n-j-{\nu_\rho(f_1)}+1})).\]
\end{lem}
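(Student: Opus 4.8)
## Proof plan

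The plan is to exploit the two transformation laws that $\widehat{f}$ inherits from the modular/quasimodular structure, namely the reflection $\widehat{f}(\theta) \leftrightarrow \overline{\widehat{f}(\pi-\theta)}$ coming from $z\mapsto -1/z$ on the unit circle, together with the explicit expression $\Im(\widehat{f}) = \tfrac{3}{\pi} f_1(e^{\ii\theta})$ from \cref{lem:Imhatf}. First I would record that $z\mapsto -1/\bar z$ fixes the unit circle and sends $e^{\ii\theta}$ to $e^{\ii(\pi-\theta)}$, so the interval $[\pi/3,2\pi/3]$ is stable under $\theta\mapsto \pi-\theta$, with fixed point $\theta=\pi/2$. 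Since $f_1$ has real Fourier coefficients and weight $k-2$, the function $\widehat{f_1}(\theta)=e^{(k-2)\ii\theta/2}f_1(e^{\ii\theta})$ is real-valued (this is exactly the computation in \eqref{eq:imghat} applied to $f_1$), and the zeros $\theta_j$ of $\theta\mapsto f_1(e^{\ii\theta})$ coincide with the zeros of $\widehat{f_1}$; because $\widehat{f_1}(\pi-\theta)=\pm\widehat{f_1}(\theta)$ up to an explicit unimodular factor that is in fact $\pm1$ on this symmetric interval, the set of zeros is invariant under $\theta\mapsto\pi-\theta$.

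Next I would count: if $f_1$ has a zero of order $v_\rho(f_1)$ at $\rho=e^{2\pi\ii/3}$ (the right endpoint), then among $\theta_1>\dots>\theta_n$ the multiplicity at $\theta_1=2\pi/3$ is $v_\rho(f_1)$ and there is no zero at $\theta=\pi/3$ (that point is the $\sltwoz$-translate of $\rho$, already accounted for). Removing the $v_\rho(f_1)$ copies of $2\pi/3$, the remaining $n-v_\rho(f_1)$ zeros in the open interval $(\pi/3,2\pi/3)$ are permuted by $\theta\mapsto\pi-\theta$; since the ordering is by decreasing $\theta$, this permutation is precisely the order-reversal $\theta_j\mapsto\theta_{n-v_\rho(f_1)+1 - (j - v_\rho(f_1))} = \theta_{n-j+1}$ on the index set $\{v_\rho(f_1)+1,\dots,n\}$. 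Re-indexing to match the statement's convention (the involution $\iota$ acts on $\{\theta_j\mid \theta_j\neq 2\pi/3\}$ and is written $\theta_j\mapsto \theta_{n-j-v_\rho(f_1)+1}$) is a bookkeeping check I would carry out carefully but not belabor. The parity identity $(-1)^j \leftrightarrow (-1)^{n-j-v_\rho(f_1)+1}$ is then immediate once one verifies that $n-v_\rho(f_1)$ and hence the relevant indices have the parity forced by the pairing.

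The remaining point is the sign statement $\sgn\widehat{f}(\theta_j) = \sgn\widehat{f}(\iota(\theta_j))$. At a zero $\theta_j$ of $f_1$ we have $\Im\widehat{f}(\theta_j)=0$, so $\widehat{f}(\theta_j)=\Re\widehat{f}(\theta_j)$ is real and nonzero (nonvanishing because $f=f_0+f_1E_2$ is irreducible, so $f_0(e^{\ii\theta_j})\neq0$, and $\Re\widehat{f}(\theta_j)=\widehat{f_0}(\theta_j)+\,\Re(\widehat{E_2}(\theta_j))\widehat{f_1}(\theta_j)/(\dots)$ reduces to $\widehat{f_0}(\theta_j)$ when $f_1(e^{\ii\theta_j})=0$). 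Thus $\sgn\widehat{f}(\theta_j)=\sgn\widehat{f_0}(\theta_j)$, and $\widehat{f_0}$ satisfies the same reflection symmetry $\widehat{f_0}(\pi-\theta)=\pm\widehat{f_0}(\theta)$ with the sign being $(-1)^{k_0}$-type constant — which I expect, after chasing the unimodular factors $e^{(k_0)\ii(\pi-\theta)/2}$ versus $e^{-(k_0)\ii\theta/2}e^{\ii k_0\theta}$, to come out to $+1$ on the symmetric interval. That last sign-tracking is the main obstacle: one has to show the reflection acts on $\widehat{f_0}$ (equivalently on the real-valued function $\widehat{f}$ restricted to the zero set of $f_1$) without introducing an extra minus sign, so that $\sgn\widehat{f_0}$ is genuinely invariant under $\theta\mapsto\pi-\theta$ rather than equivariant with a sign. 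I would handle this by writing everything in terms of the manifestly real functions $\widehat{f_0}$ and $\widehat{f_1}$, using that $\widehat{f_1}(\pi/2)=0$ or not and the continuity/parity of $\widehat{f_1}$ across $\pi/2$ to pin down the constant, and then the claimed identity follows by combining the index-parity computation with this sign invariance.
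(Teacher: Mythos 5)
Your overall strategy coincides with the paper's: reflect via $\theta\mapsto\pi-\theta$ (i.e.\ $z\mapsto -1/z$ on the unit circle), use that at a zero of $f_1$ the quasimodular correction vanishes so $f$ transforms like a weight-$k$ modular form, reduce $\sgn\widehat f(\theta_j)$ to $\sgn\widehat{f_0}(\theta_j)$, and match index parities. But the step you flag as ``the main obstacle'' is resolved incorrectly, and it is exactly where the content of the lemma lives. Chasing the unimodular factors gives
\begin{equation}
\widehat{f_0}(\pi-\theta) = e^{\ii k\pi/2}\,e^{-\ii k\theta/2}\,(e^{\ii\theta})^{k}\,f_0(e^{\ii\theta}) = (-1)^{k/2}\,\widehat{f_0}(\theta),
\end{equation}
so $\sgn\widehat{f_0}$ is \emph{not} invariant under the reflection: it flips whenever $k\equiv 2\pmod 4$, i.e.\ for $k\equiv 2,6\pmod{12}$, which are precisely residues that irreducibility does not exclude. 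A consistency check makes the problem visible: if $\sgn\widehat f$ were $\iota$-invariant, the displayed identity of the lemma would force $(-1)^{\,n-v_\rho(f_1)+1}=1$, i.e.\ $n-v_\rho(f_1)$ odd, for every irreducible $f$; but $n-v_\rho(f_1)$ is odd exactly when $f_1$ vanishes to odd order at $\ii$, which fails for $k\equiv 2,6\pmod{12}$. So your sign claim and your parity claim cannot both hold.

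The missing idea is that the factor $(-1)^{k/2}$ has to be cancelled against the index parity, i.e.\ one must prove the congruence $\tfrac{k}{2}\equiv n-v_\rho(f_1)+1\pmod 2$. That is the actual work of the proof: $n-v_\rho(f_1)$ is odd iff $f_1$ has a zero of odd order at $\ii$ (the only possible fixed point of the pairing), which for the weight-$(k-2)$ form $f_1$ happens iff $k-2\equiv 2,6,10\pmod{12}$; one then discards $k\equiv 4\pmod 6$ (there $E_4$ would divide both $f_0$ and $f_1$, contradicting irreducibility) and checks the surviving cases $k\equiv 0,2,6,8\pmod{12}$. None of this appears in your plan --- you treat the parity of $n-v_\rho(f_1)$ as something ``forced by the pairing,'' but the pairing alone only determines it up to the multiplicity of $f_1$ at $\ii$. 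A smaller issue: your index formula $\iota(\theta_j)=\theta_{n-j+1}$ differs from order-reversal on $\{v_\rho(f_1)+1,\dots,n\}$, which is $j\mapsto n+v_\rho(f_1)+1-j$, by $v_\rho(f_1)$, so the deferred ``bookkeeping check'' would itself change the parity when $v_\rho(f_1)$ is odd and cannot be skipped.
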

\begin{proof}
Note that if $\theta_j$ is the angle of an element on the unit disk for which $f_1$ has a zero, then $\pi-\theta_j$ also is such an angle. Leaving out the~${\nu_\rho(f_1)}$ angles $\theta_1=\ldots=\theta_{\nu_\rho(f_1)}=\frac{2\pi}{3}$, we see that $\iota$ is a well-defined involution.

As $f_1(e^{\ii\theta_j})=0$, we obtain
\begin{align} \widehat{f}(\theta_{n-j-{\nu_\rho(f_1)}+1}) &\= e^{\frac{1}{2}\ii k(\pi-\theta_j)}\, f\Bigl(-\frac{1}{e^{\ii \theta_{j}}}\Bigr) 
\= e^{\frac{1}{2}\ii k(\pi-\theta_j)}\, e^{k\ii\theta_{j}}\, f(e^{\ii \theta_{j}}) 
\= (-1)^{k/2} \,\widehat{f}(\theta_j).
\end{align}
We finish the proof by showing that $\frac{k}{2}\equiv n-{\nu_\rho(f_1)}+1 \mod 2$. Namely, $n-{\nu_\rho(f_1)}$ is odd precisely if $f_1$ admits a zero of odd order at $\ii$, or, equivalently, if $k-2\equiv 2,6$ or $10 \mod 12.$ We can exclude the case where $k\equiv 4 \, (6)$. Namely, then both $f_0$ and $f_1$ are divisible by $E_4$, contradicting the irreducibility of $f$. Hence,  $n-{\nu_\rho(f_1)}$ is odd if $k\equiv 0,8 \mod 12$ and even if $k\equiv 2,6 \mod 12$ as desired. 
\end{proof}

\begin{prop}\label{prop:Ninf}
For an irreducible quasimodular form~$f$ of weight~$k$ and depth 1, we have
\begin{align}\label{eq:zeros1}
N_\infty(f)  &\= \frac{1}{2}\left \lfloor \frac{k}{6} \right \rfloor - \frac{(-1)^{\nu_\rho(f_1)}r(f_1)}{2}\sum_{j} (-1)^j \sgn(\widehat{f}(\theta_j)).
\end{align}
\end{prop}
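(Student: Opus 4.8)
The plan is to start from the identity \eqref{eq:Ninfty2}, namely $N_\infty(f) = \frac{k}{12} - \bigl(\alpha(\tfrac{2\pi}{3})-\alpha(\tfrac{\pi}{3})\bigr)$, and to compute the variation of the continuous argument $\alpha$ by tracking the sign changes of $\Im(\widehat f)$ on the arc $[\tfrac{\pi}{3},\tfrac{2\pi}{3}]$. By \cref{lem:Imhatf} specialized to depth $1$ we have $\Im(\widehat f) = \tfrac{3}{\pi}\widehat{f_1}(\theta)$ (up to the normalization stated in the paper), so the zeros of $\Im(\widehat f)$ on the open arc are exactly the angles $\theta_1 > \dots > \theta_n$ at which $f_1$ vanishes on the unit circle, together with possible contributions at the endpoints. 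Since $\widehat f(\theta)$ is, away from those $\theta_j$, a nonzero complex number whose imaginary part is a nonzero real multiple of $\widehat{f_1}$, the continuous argument $\alpha$ can only cross a half-integer at the $\theta_j$; between consecutive $\theta_j$ the point $\widehat f(\theta)$ stays in one open half-plane ($\Im>0$ or $\Im<0$), and the total variation $\alpha(\tfrac{2\pi}{3})-\alpha(\tfrac{\pi}{3})$ is a sum of half-integer jumps, one for each crossing, whose signs are determined by $\sgn\bigl(\Re\widehat f(\theta_j)\bigr)=\sgn\bigl(\widehat f(\theta_j)\bigr)$ (using that $\widehat f$ is real whenever $\Im\widehat f$ vanishes, as established already) and by whether $\Im\widehat f=\tfrac{3}{\pi}\widehat{f_1}$ changes sign at $\theta_j$ — i.e. the parity of the order of the zero of $f_1$ there.

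Concretely, I would first handle the generic case where all the $\theta_j$ are simple and none equals $\tfrac{\pi}{3}$ or $\tfrac{2\pi}{3}$. Moving $\theta$ from $\tfrac{\pi}{3}$ to $\tfrac{2\pi}{3}$, each simple zero of $f_1$ flips the sign of $\Im\widehat f$, so $\widehat f$ crosses the real axis transversally, contributing $\pm\tfrac12$ to the argument variation according to the sign of $\Re\widehat f = \widehat f$ at the crossing and the direction of the crossing; the direction alternates, which is why the factor $(-1)^j$ appears. The overall sign is pinned down by the behaviour near the endpoint $\theta=\tfrac{2\pi}{3}$ (i.e. near $\rho$): \cref{lem:r} tells us $\lim_{\theta\uparrow 2\pi/3}\sgn\widehat{f_1}(\theta) = (-1)^{v_\rho(f_1)}r(f_1)$ (after unwinding the relation between $\widehat{\,\cdot\,}$ and the Taylor expansion), which fixes on which side of the real axis $\widehat f$ sits just before the last crossing and hence the sign of the first jump; this produces the prefactor $(-1)^{v_\rho(f_1)}r(f_1)$. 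Summing the alternating $\pm\tfrac12\sgn\widehat f(\theta_j)$ contributions, and combining with the previously proven involution lemma to see that the terms with $\theta_j=\tfrac{2\pi}{3}$ and the symmetry $\theta_j\leftrightarrow\pi-\theta_j$ are compatible, yields the stated sum $-\tfrac{(-1)^{v_\rho(f_1)}r(f_1)}{2}\sum_j(-1)^j\sgn\widehat f(\theta_j)$. Finally, the $\tfrac{k}{12}$ from \eqref{eq:Ninfty2} has to be combined with the endpoint contributions of $\alpha$ at $\tfrac{\pi}{3}$ and $\tfrac{2\pi}{3}$: a careful bookkeeping of the half-integer ``partial crossings'' forced at the corners $\rho$ and $\ii$ of $\mathcal F$ (where $\mathcal C$ meets $\mathcal L$ and $\mathcal R$, and where $\widehat f$ may already lie on the real axis) converts $\tfrac{k}{12}$ into $\tfrac12\lfloor\tfrac{k}{6}\rfloor$; here one uses that $k\not\equiv 4\pmod 6$ for an irreducible non-modular $f$, exactly as in the involution lemma.

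For the degenerate cases — $f_1$ having a zero of higher order on the arc, or a zero exactly at $\rho$ or $\ii$ — I would argue that at a zero of even order $\Im\widehat f$ does not change sign, so $\alpha$ returns to the same half-integer and such a $\theta_j$ contributes $0$ net; this is consistent with the involution $\iota$ pairing it with itself and with the $(-1)^j$ pattern. Zeros at the corners require treating $\widehat f$ on the adjacent vertical segment too, but \cref{lem:r} again controls the signs, and the contributions are absorbed into the $\lfloor k/6\rfloor$ term. Throughout I would keep the original assumption (made at \eqref{eq:Ninfty1}) that $f$ itself has no zeros on $\mathcal C$; if it does, one indents the contour with small arcs around those zeros and the standard argument shows each such zero of order $v$ contributes $+v$ (or $+\tfrac v2$ at $\rho,\ii$) to $N_\infty(f)$, matching the left-hand side, so no change is needed on the right.

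The main obstacle I anticipate is the sign bookkeeping at the two corners $\tfrac{\pi}{3},\tfrac{2\pi}{3}$: getting the passage from $\tfrac{k}{12}$ to $\tfrac12\lfloor\tfrac{k}{6}\rfloor$ right requires carefully deciding, using \cref{lem:r} and the known multiplicity $v_\rho(g)=3\ell+\delta$ with $\delta\equiv k\pmod 3$, exactly how much of a half-integer the argument has already ``used up'' at each endpoint, and showing this endpoint contribution is the same regardless of the detailed configuration of the $\theta_j$. Everything else — the alternation of crossing directions, the identification of the global sign via the behaviour near $\rho$, and the reduction of higher-order and corner zeros — is routine once that corner analysis is fixed.
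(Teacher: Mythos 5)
Your proposal is correct and follows essentially the same route as the paper's proof: both compute $\alpha(\tfrac{2\pi}{3})-\alpha(\tfrac{\pi}{3})$ by locating the real-axis crossings of $\widehat f$ at the zeros of $f_1$ on the arc, fixing the overall sign of the alternating $\pm\tfrac12$ contributions via \cref{lem:r} near $\rho$, and absorbing the endpoint bookkeeping (using $k\not\equiv 4 \pmod 6$ for irreducible $f$) to convert $\tfrac{k}{12}$ into $\tfrac12\lfloor\tfrac{k}{6}\rfloor$. The only slip is immaterial to the main argument: in your aside about zeros of $f$ on $\mathcal{C}$, a zero of order $v$ at $\rho$ would be weighted $\tfrac{v}{3}$ (since $e_\rho=3$), not $\tfrac{v}{2}$.
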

\begin{proof}
The idea of the proof is to determine the value $\alpha(\frac{2\pi}{3})-\alpha(\frac{\pi}{3})$ in~\eqref{eq:Ninfty2}. 
Denote by $A(\theta)$ the argument of $\widehat{f}$, i.e., the unique value in $(-\frac{1}{2},\frac{1}{2}]$ such that $\alpha(\theta)\equiv A(\theta)\mod 1$. 

As $\alpha$ is real analytic, this value can uniquely be determined by knowing $A(\frac{\pi}{3}), A(\frac{2\pi}{3})$ and all the values of $\theta$ for which $A(\theta)\in \{0,\frac{1}{2}\}$. For example, if $0<A(\frac{2\pi}{3})<\frac{1}{2}$, and $A(\theta_1)=\frac{1}{2}$, whereas $A(\theta_2) =0$, then for $\frac{2\pi} 3>\theta\geq  \theta_2$, $\alpha$ increases by $1-A(\frac{2\pi} 3).$ Observe that $A(\theta)\in \{0,\frac{1}{2}\}$ precisely if $\Im f(e^{\ii \theta})=0$, or equivalently, $f_1(e^{\ii \theta})=0$. 

Now, in order to compute the value of $\alpha(\frac{2\pi}{3})-\alpha(\frac{\pi}{3})$, first assume that all zeros of $\theta\mapsto f_1(e^{\ii \theta})$ on $(\frac{\pi}{3},\frac{2 \pi}{3}]$ are simple and satisfy $\theta\in \{\frac\pi 3,\frac\pi 2\}$.
Whether $f_1(e^{\ii\theta})=0$ for such $\theta\in \{\frac\pi 3,\frac\pi 2\}$ (or, equivalently, $A(\frac{2\pi}{3})\in \{0,\frac{1}{2}\}$) is determined by the value of $k$ modulo $12$, see below. Note that as $f$ is \emph{irreducible}, we have $k\not\equiv 4 \, (6)$.
\[ 
\begin{array}{r c c}
k \mod 12  & A(\frac{2\pi}{3})\in \{0,\frac{1}{2}\} &  A(\frac\pi 2)\in \{0,\frac{1}{2}\}    \\\hline
0 & \checkmark & \checkmark\\
2 & \mathbf{x} & \mathbf{x} \\
6 & \checkmark &\mathbf{x} \\
8 & \mathbf{x} & \checkmark 
\end{array}
\]
Temporarily, denote by $\phi_i$ the elements of $\{2\pi/3, \pi/2\}$ for which $\theta\mapsto f_1(e^{\ii \theta})$ admits a zero, and such that $\phi_1\geq \phi_2$.
As $f$ is irreducible, we have $\widehat{f}(\phi_i)\neq 0$, so that  $\sgn(\widehat{f}(\phi_i))$ is well-defined. The sign being positive (or negative) corresponds to $\alpha(\phi_i)\equiv 0 \mod 1$ (or $\frac{1}{2} \mod 1$ respectively). 
By \cref{lem:r} we have $(-1)^{{\nu_\rho(f_1)}}r(f_1) = \lim_{\theta\uparrow2\pi/3} \sgn(e^{k\ii\theta/2}f_1(e^{\ii \theta}))$ with ${\nu_\rho(f_1)}$ the order of vanishing of $f_1$ at $\rho$. Hence, a case-by-case analysis using the symmetry
$\Im \widehat{f}(\theta) = (-1)^{k/2+1} \Im \widehat{f}(\pi-\theta)$ shows
\[A\Bigl(\frac{2\pi}{3}\Bigr)-A\Bigl(\frac{\pi}{3}\Bigr) \meno \frac{(-1)^{\nu_\rho(f_1)}r(f_1)}{2}\sum_{j} (-1)^j \sgn(\widehat{f}(\phi_j)) \= 
\begin{cases}
0 & k\equiv 0 \, (6) \\
\frac{1}{6} & k\equiv 2 \, (6).
\end{cases}\]

Now, in the general case, note that the contribution to the variation of the argument on each interval $[\theta_j, \theta_{j+1}]$ is \[
\frac{(-1)^{\nu_\rho(f_1)}r(f_1)}{4} \left((-1)^j \sgn(\widehat{f}(\theta_j)) + (-1)^{j+1} \sgn(\widehat{f}(\theta_{j+1}))\right).\]
Adding these contributions with special care at the boundary cases as above leads to the result
\[\alpha\Bigl(\frac{2\pi}{3}\Bigr)-\alpha\Bigl(\frac{\pi}{3}\Bigr) \meno \frac{(-1)^{\nu_\rho(f_1)}r(f_1)}{2}\sum_{j} (-1)^j \sgn(\widehat{f}(\theta_j)) \= 
\begin{cases}
0 & k\equiv 0 \, (6) \\
\frac{1}{6} & k\equiv 2 \, (6).
\end{cases}\]
By Equation~\eqref{eq:Ninfty2} the result follows.
\end{proof}

\begin{remark}
For a mixed modular form~$F=\sum_{j=0}^p f_j$ with $f_j$ of weight $k-2j$, we analogously find
\begin{align}
    \Im (\widehat{F}) &= \sum_{j\geq 1}\widehat{f_j}(\theta)\sin(j\theta). 
\end{align}
From this we similarly deduce that for a mixed modular form~$F=f_0+f_j$ (with $f_j$ of weight $k-2j$) we have
\begin{align}\label{eq:zerosmixed}
N_\infty(F) &\= \frac{1}{2}\left \lfloor \frac{k}{6} \right \rfloor - \frac{(-1)^{\nu_\rho(f_1)}r(f_j)}{2}\sum_{i} (-1)^i \sgn(\widehat{F}(\theta_i)),
\end{align}
where, accordingly, the $\theta_i$ are the zeros of $\theta\mapsto f_j(e^{\ii\theta})$. 
\end{remark}

\paragraph{Examples in depth 1}
\begin{exmp}\label{E2Exp}
Consider $f = E_2$. In this case, $f_0 \equiv 0$ and $f_1 \equiv 1$. As $f_1$ has no zeros on the arc, application of Proposition~\ref{prop:Ninf} gives
\begin{align}
    N_\infty(E_2) = \frac{1}{2}\left \lfloor \frac{2}{6} \right \rfloor = 0.
\end{align}
Hence, $E_2$ has no zeros in the standard fundamental domain---a result which was discovered and proven in \cite[Proposition~4.2]{BS10} by different means.
\end{exmp}

\begin{exmp}\label{ex:intro}
We now return to the example in the introduction, i.e., let $f$ be the unique quasimodular form~$f=f_0+f_1E_2$ in the~$7$-dimensional vector space $M_{36}^{\leq 1}$ with $q$-expansion $f=1+O(q^7)$.
%\[ f = 1 \+ 212963830173619200 q^7 \+ 45122255555990230800 q^8 \+  3920264199663225523200 q^9 \+ O(q^{10}),\]
% this is a depth $1$ quasimodular form of weight $36$. 
In order to apply Theorem~\ref{thm:main2}, %we write $f$ in the $E_2$-basis $f = f_0 + f_1E_2$, where $f_0$ and $f_1$ are modular forms of weight $36$ and $34$ respectively. 
we compute $f_0$ and $f_1$ explicitly:
\[
     f_0 \= \frac{43976643}{108264772}{\Delta^3}\left(j^3-\frac{28903981960}{14658881}j^2\+\frac{9706007861928}{14658881}j\+\frac{396402626858112}{14658881}\right) 
\]
and
\[
    f_1 \= \frac{64288129}{108264772}{E_4E_6}{\Delta^2}\left(j^2 - \frac{2225338584}{1737517}j + \frac{373036607496}{1737517}\right), 
\]
where $j$ is the modular $j$-invariant, given by $j=1728\frac{E_4^3}{E_4^3-E_6^2}$. 
We find that $f_1$ has zeros at $\ii$ and $\rho$, coming from the factors $E_4E_6$. Moreover, the roots of the degree~$2$ polynomial in the $j$-invariant are given by $j(\tau_1) \approx 198.3495\ldots$ and $j(\tau_2) \approx 1082.4083\ldots$. Recall $j(\mathcal{L}) = (-\infty,0]$ and $j(\mathcal{C}) = [0,1728]$, where $\mathcal{L}$ and $\mathcal{C}$ are the left vertical and circular boundary of the fundamental domain as in~\eqref{eq:C} and~\eqref{eq:L}. Therefore, the zeros of $f_1$ in $\mathcal{F}$ are all located on $\mathcal{C}$.
Similarly, the zeros of $f_0$ are $\tau_3, \tau_4, \tau_5$, where $j(\tau_3) \approx -36.7451\ldots$, $j(\tau_4) \approx 482.1402\ldots$ and $j(\tau_5) \approx 1526.3776\ldots$, indicating that $\tau_4$ and $\tau_5$ lie on~$\mathcal{C}$ (and $\tau_3$ lies on~$\mathcal{L}$).

As $\Delta(\rho) < 0$ and $j(\rho) = 0$, we have that
\[ 
f(\rho) \= \widehat{f}\Bigl(\frac{2\pi}{3}\Bigr) \= \widehat{f_0}\Bigl(\frac{2\pi}{3}\Bigr) \= f_0(\rho) \:<\: 0.
\]
From the location of the zeros of $f_0$ on $\mathcal{C}$, we conclude (writing $\tau_j = e^{\ii \theta_j}$ for $\frac{\pi}{2} \leq \theta_j \leq \frac{2 \pi}{3}$ if $\tau_j$ is located on $\mathcal{C}$)
\begin{align}
    \widehat{f}(\theta_1) &\= \widehat{f_0}(\theta_1) \:<\: 0, \\
    \widehat{f}(\theta_2) &\= \widehat{f_0}(\theta_2) \:>\: 0, \\
    \widehat{f}(\tfrac{\pi}{2}) &\= \widehat{f_0}(\tfrac{\pi}{2}) \:<\: 0 .
\end{align}
Further, $f_1(-\frac{1}{2} + \ii \infty) > 0$ and $f_1$ does not change sign on $\mathcal{L}$, as it has no zeros there. Therefore, 
%$r(f_1) = (-1)^{\nu_\rho(f_1)} = -1$.
$r(f_1)=1$ and $(-1)^{\nu_\rho(f_1)}=-1$.
We now apply \cref{prop:Ninf} (in the form of Equation~\eqref{eq:Ninf}): 
\begin{align}
    N_{(1, \infty]}(f) \= \frac{1}{2} \left \lfloor \frac{36}{6} \right \rfloor \+ \Bigl(\frac{-1}{2}\cdot -1 + 1 \cdot -1 + -1 \cdot 1 + \frac{1}{2} \cdot -1 \Bigr) \= 1.
\end{align}
We come back to this example in \cref{ex:intro2}.
\end{exmp}

\begin{exmp}
For $k = 6n$ $(n = 1, 2, 3, \dots)$, we consider the Kaneko--Zagier differential equation 
\begin{equation}
    f''(\tau) - \frac{k}{6}E_2(\tau) f'(\tau) + \frac{k(k-1)}{12}E_2'(\tau)f(\tau) = 0.
\end{equation}
In \cite[Theorem 2.1]{KK06}, it was shown that a solution to this equation is given by an \emph{extremal} quasimodular form $g$ of depth $1$ and weight $k$, i.e.

\begin{equation}
    g(\tau) \= cq^{m-1} + \mathcal{O}(q^m),
\end{equation}
where $c \neq 0$ and $m$ is the dimension of the space of weight $k$ forms of depth $1$ (i.e., $m=n+1$).
Clearly, $g$ has a zero at $i \infty$ of order $m-1=\frac{k}{6}$. From \cref{prop:Ninf} we learn that $g$ has no other zeros in $\mathcal{F}$ (see also \cref{cor:extreme}).
\end{exmp}

\paragraph{Examples in higher depth}
In depth~$1$ we have seen that the number of zeros of a quasimodular form $f=f_0+f_1E_2$ only depends on the sign of $\widehat{f_0}$ in the zeros of $f_1$ on the arc $\mathcal{C}$. The next example shows that this is not anymore the case for higher depth.
\begin{exmp}
Consider the following quasimodular form of weight~$4$ and depth~$2$ for a real parameter~$t$
\begin{equation}
    f_t = E_4 - t\,E_2^2.
\end{equation}
We are interested in the value of $N_\infty(f_t)$.
By \cref{lem:Imhatf} we have
\begin{align}
    \Im(\widehat{f_t}) &\= 
    -t \frac{6}{\pi}\Bigl(\widehat{E}_2 + \frac{3}{\pi \ii} \Bigr). 
\end{align}
It can be seen that $\Im(\widehat{f_t})$ only vanishes once, at $\theta_0 \, = \frac{\pi}{2}$. Hence, 
$$\widehat{f_t}(\theta_0) \= \widehat{E}_4(\theta_0) + t \frac{9}{\pi^2}.$$

Now, first assume $t<0$. 
As $\widehat{E}_4 < 0$ on $(\frac{\pi}{3},\frac{2\pi}{3})$, we have $\widehat{f_t}(\theta_0) < 0$.
Also $\widehat{f}_t(\pi/3) = r e^{2 \pi \ii/3}$ for some positive $r$ and $\widehat{f}_t(2 \pi /3) = r e^{4 \pi i /3}$.
This means that $\widehat{f}_t(\theta)$ with $\theta \in (\frac{\pi}{3},\frac{2\pi}{3})$ moves from $re^{2 \pi \ii/3}$ to $re^{4 \pi\ii /3}$, crossing the (negative) real axis exactly once. Hence, the variation of the argument $\alpha(\frac{2\pi}{3})-\alpha(\frac{\pi}{3}) = \frac{2 \pi}{3}$.
Therefore, $$N_\infty(f_t) = \frac{4}{12} - \frac{1}{2 \pi} \frac{2 \pi}{3} = 0,$$ for $t < 0$.

For $t > 0$, we have two cases.
Let
$$t_1 \defis - \frac{\pi^2}{9} \widehat{E}_4(\theta_0) \approx 1.596.. .$$
Now assume $0 < t < t_1$. Since $t>0$, we have
$\widehat{f}_t(\pi/3) = s e^{5 \pi \ii/3 }$ for some positive $s$ and $\widehat{f}_t(2 \pi /3 ) = s e^{\pi \ii/3}$. Since $t < t_1$, we still have $\widehat{f}_t(\theta_0) < 0$.
Hence the variation of the argument is now $-\frac{4 \pi}{3}$. Therefore, 
$$N_\infty(f_t) = \frac{4}{12} - \frac{1}{2 \pi} \cdot \frac{-4 \pi}{3} = 1.$$
For the case $t > t_1$, we have that $\widehat{f_t}(\theta_0) > 0$. So the variation of the argument equals $\frac{2 \pi}{3}$ in that case, and $N_\infty(f_t)=0$.

We conclude that
$$
N_\infty(f_t) =
\begin{cases}
0 &\text{if } t < 0 \text{ or } t> t_1\\
1 &\text{if } 0 < t < t_1\,. \\
\end{cases}
$$
\end{exmp}

\section{Vector-valued equivariant forms}\label{sec:h}
By the quasimodular transformation equation~\eqref{eq:transfo}
\[ (f|_k\gamma)(\tau)  \= \sum_{j=0}^p \frac{(\mathfrak{d}^jf)(\tau)}{j!} \Bigl(\frac{1}{2\pi \ii}\frac{1}{\tau-\lambda}\Bigr)^j,\]
where $\lambda=-\frac{d}{c}$ for $\gamma=\left(\begin{smallmatrix} a & b \\ c & d \end{smallmatrix}\right) \in \sltwoz$. We study the solutions $h:\mathfrak{h}\to \c$ of 
\begin{align}\label{eq:defh} 0 \= \sum_{j=0}^p \frac{(\mathfrak{d}^jf)(\tau)}{j!} \Bigl(\frac{1}{2\pi \ii}\frac{1}{\tau-h(\tau)}\Bigr)^j.\end{align}
The main property of the solutions~$h$ is that $f$ has a zero at $\gamma\tau$ with $\tau\in \mathcal{F}$ if and only if there is a solution satisfying $h(\tau)=\lambda$. 
Another property 
is that if $h_1(\tau),\ldots,h_p(\tau)$ are different solutions (for a fixed $\tau\in \mathfrak{h}$), we have
\begin{align}\label{eq:prodh}
\prod_{i} (h_i(\tau)-\lambda) = (\tau-\lambda)^p \frac{(f|_k\gamma)(\tau)}{f(\tau)},
\end{align}
where as always $\gamma=\left(\begin{smallmatrix} a & b \\ c & d \end{smallmatrix}\right)\in \sltwoz$ with $\lambda=-\frac{d}{c}$.

We now study the invariance of the solutions $h$ under $\sltwoz$.
\begin{prop}\label{prop:hequiv}
If $h:\mathcal{F}\to \c$ is a solution of~\eqref{eq:defh} for $\tau=\tau_0$, then
\begin{enumerate}[{\upshape (i)}]
\item  $\displaystyle \gamma h=\frac{a h+b}{c h+d}:\mathcal{F}\to \c$ is a solution of~\eqref{eq:defh} for $\tau=\gamma\tau_0$. 
\item $-\overline{h}: \mathcal{F}\to \c$ is a solution of~\eqref{eq:defh} for $\tau=-\overline{\tau_0}$.
\end{enumerate}
\end{prop}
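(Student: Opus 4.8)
The plan is to verify each of the two claimed equivariance properties by direct substitution into the defining equation~\eqref{eq:defh}, using the transformation laws of the quasimodular form $f$ and of its $\mathfrak{d}$-derivatives. The defining equation says that $\lambda = h(\tau_0)$ is a root of the polynomial $P_{\tau_0}(X) = \sum_{j=0}^p \frac{(\mathfrak{d}^j f)(\tau_0)}{j!}\bigl(\tfrac{1}{2\pi\ii}\tfrac{1}{\tau_0 - X}\bigr)^j$, equivalently (after clearing denominators) that $(\tau_0 - \lambda)^p P_{\tau_0}(\lambda) = \sum_{j=0}^p \frac{(\mathfrak{d}^j f)(\tau_0)}{j!}\bigl(\tfrac{1}{2\pi\ii}\bigr)^j (\tau_0-\lambda)^{p-j} = 0$. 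The key structural input is the comparison of this expression with $(f|_k\gamma')(\tau)$ for a suitable $\gamma'$ with $\lambda(\gamma') = \lambda$: by~\eqref{eq:transfo}, $(\tau_0-\lambda)^p \cdot (f|_k\gamma')(\tau_0)$ equals, up to the nonzero factor $(c')^{-p}$ coming from writing $\tfrac{1}{2\pi\ii}\tfrac{c'}{c'\tau_0+d'} = \tfrac{1}{2\pi\ii}\tfrac{1}{\tau_0-\lambda}$, exactly the left-hand side above. So a clean way to phrase the whole proof is: \emph{$h(\tau_0) = \lambda$ is a solution of~\eqref{eq:defh} iff $(f|_k\gamma')(\tau_0) = 0$ for one (hence any) $\gamma' \in \sltwoz$ with $\lambda(\gamma') = \lambda$}, and then both (i) and (ii) become statements about how the vanishing locus of $f$ (as a function on $\mathfrak{h}$, twisted by $|_k$) transforms.

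For part (i): I want to show $\gamma h$ solves~\eqref{eq:defh} at $\gamma\tau_0$, i.e.\ that $\mu := \tfrac{a h(\tau_0)+b}{c h(\tau_0)+d} = \gamma(\lambda)$ is a root of $P_{\gamma\tau_0}$. By the reformulation, this means showing $(f|_k\gamma'')(\gamma\tau_0) = 0$ for some $\gamma''$ with $\lambda(\gamma'') = \gamma(\lambda)$. Pick $\gamma''$ so that $\gamma'' \gamma = \gamma'$ (i.e.\ $\gamma'' = \gamma'\gamma^{-1}$); one checks directly from the formula $\lambda(\delta) = -d_\delta/c_\delta$ that $\lambda(\gamma'\gamma^{-1}) = \gamma(\lambda(\gamma'))$, which is a short $2\times 2$ matrix computation. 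Then the cocycle property of the slash action gives $(f|_k\gamma'')(\gamma\tau_0) = ((f|_k\gamma'')|_k\gamma)(\tau_0) \cdot (\text{automorphy factor})^{-1}$, wait — more precisely $(f|_k\gamma')( \tau_0) = (f|_k(\gamma''\gamma))(\tau_0) = ((f|_k\gamma'')|_k\gamma)(\tau_0)$, so $(f|_k\gamma')(\tau_0) = 0$ forces $(f|_k\gamma'')(\gamma\tau_0) = 0$ since the automorphy factor $(c\tau_0+d)^{-k}$ is nonzero. This is the crux and it is essentially just associativity of $|_k$.

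For part (ii): the input is the third equivariance-type symmetry already used repeatedly in the paper for forms with real Fourier coefficients, namely that $\overline{(\mathfrak{d}^j f)(-\overline{\tau})} = (\mathfrak{d}^j f)(\tau)$ (conjugation commutes with $\tau \mapsto -\overline{\tau}$ because the Fourier/$q$-expansion has real coefficients, and $\mathfrak{d}$ preserves this since $\mathfrak{d}(E_2)=12$ and $\mathfrak{d}$ kills modular forms, both real operations). Substituting $\tau = -\overline{\tau_0}$ and $X = -\overline{h(\tau_0)}$ into $\sum_j \frac{(\mathfrak{d}^j f)(-\overline{\tau_0})}{j!}\bigl(\tfrac{1}{2\pi\ii}\tfrac{1}{-\overline{\tau_0} + \overline{h(\tau_0)}}\bigr)^j$, I conjugate the whole sum: conjugation turns $(\mathfrak{d}^j f)(-\overline{\tau_0})$ into $(\mathfrak{d}^j f)(\tau_0)$, turns $\tfrac{1}{2\pi\ii}$ into $-\tfrac{1}{2\pi\ii}$, and turns $\tfrac{1}{-\overline{\tau_0}+\overline{h(\tau_0)}}$ into $\tfrac{1}{-\tau_0 + h(\tau_0)} = -\tfrac{1}{\tau_0 - h(\tau_0)}$; the two sign flips inside the $j$-th power cancel, so the conjugated sum is exactly $\sum_j \frac{(\mathfrak{d}^j f)(\tau_0)}{j!}\bigl(\tfrac{1}{2\pi\ii}\tfrac{1}{\tau_0 - h(\tau_0)}\bigr)^j = 0$ by hypothesis. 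Hence the original (unconjugated) sum is $\overline{0} = 0$, which is precisely the statement that $-\overline{h}$ solves~\eqref{eq:defh} at $-\overline{\tau_0}$.

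The main obstacle is really only bookkeeping: keeping track of which $\gamma$ implements which Möbius transformation and checking the identity $\lambda(\gamma'\gamma^{-1}) = \gamma(\lambda(\gamma'))$ cleanly, together with being careful that the "one hence any $\gamma'$ with $\lambda(\gamma')=\lambda$" reformulation is legitimate — this uses that $f$ is $1$-periodic so that replacing $\gamma'$ by $\left(\begin{smallmatrix}1&n\\0&1\end{smallmatrix}\right)\gamma'$ does not change vanishing, exactly as recorded in the discussion preceding \cref{def:vtau}. Once that reformulation is in place, both (i) and (ii) are one-line consequences of, respectively, the cocycle identity for $|_k$ and the reality of the Fourier coefficients; no hard analysis is involved. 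I would present part (i) via the cocycle argument and part (ii) via the explicit conjugation computation, since the latter is transparent and self-contained.
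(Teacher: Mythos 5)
Your part (ii) is correct and coincides with the paper's argument: conjugate the defining sum, use that $\overline{(\mathfrak{d}^jf)(-\overline{\tau_0})}=(\mathfrak{d}^jf)(\tau_0)$ because the $\mathfrak{d}^jf$ have real Fourier coefficients, and observe that the two sign changes inside the $j$-th power cancel. The gap is in part (i). Your reformulation ``$\lambda=h(\tau_0)$ solves \eqref{eq:defh} iff $(f|_k\gamma')(\tau_0)=0$ for some $\gamma'\in\sltwoz$ with $\lambda(\gamma')=\lambda$'' presupposes that such a $\gamma'$ exists, i.e.\ that $\lambda\in\mathbb{P}^1(\q)$. But the solutions of \eqref{eq:defh} at a fixed $\tau_0$ are the roots of a degree-$p$ polynomial whose coefficients are the (generically transcendental) values $(\mathfrak{d}^jf)(\tau_0)$, so $h(\tau_0)$ is an arbitrary complex number and in general no such $\gamma'$ exists. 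The identity \eqref{eq:transfo}, which you need in order to identify $P_{\tau_0}(\lambda)$ with $(f|_k\gamma')(\tau_0)$, holds only for $\gamma'\in\sltwoz$ (for $p=0$ it would otherwise assert $\mathrm{SL}_2(\r)$-invariance of a modular form), so your cocycle argument proves the equivariance only at those $\tau_0$ for which $h(\tau_0)$ happens to be rational. Since the proposition is a pointwise statement about a single $\tau_0$, you cannot appeal to continuity or to density of $h^{-1}(\q)$ either; meromorphy of the solutions is only established afterwards, in \cref{cor:meromorphic}, using this very proposition.

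The argument can be completed along your lines, but the missing step must be made explicit. Put $Q_{\tau}(X):=\sum_{j=0}^p\frac{(\mathfrak{d}^jf)(\tau)}{j!}\bigl(\frac{1}{2\pi\ii}\bigr)^j(\tau-X)^{p-j}$, the cleared-denominator form of \eqref{eq:defh}. Your cocycle computation, run over all $\gamma'\in\sltwoz$, shows that the two polynomials $Q_{\tau_0}(X)$ and $(c\tau_0+d)^{p-k}(cX+d)^p\,Q_{\gamma\tau_0}(\gamma X)$, both of degree at most $p$ in $X$, agree at every rational $X$ and hence agree identically; evaluating at the genuine complex root $X=h(\tau_0)$ then gives $Q_{\gamma\tau_0}(\gamma h(\tau_0))=0$, modulo a separate word for the degenerate case $c\,h(\tau_0)+d=0$. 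This polynomial-identity (Zariski-density) step is the missing idea; without it the ``one-line consequence of the cocycle identity'' does not prove the statement. The paper sidesteps the issue by verifying \eqref{eq:defh} directly for the generators $T$ and $S$ via the transformation law of the $\mathfrak{d}^jf$, which works uniformly in the complex value $h(\tau_0)$; your route, once repaired, is a legitimate and somewhat more conceptual alternative.
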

\begin{proof}
It suffices to show the first part for the two generators $\left(\begin{smallmatrix} 1 & 1 \\ 0  & 1 \end{smallmatrix}\right)$ and $\left(\begin{smallmatrix} 0 & -1 \\ 1  & 0 \end{smallmatrix}\right)$ of $\sltwoz$. For the first generator, the result is almost immediate, so we only have to consider the inversion in the unit disk. This follows from the following computation:
\begin{align}\label{eq:hsl2z} 
\sum_{j=0}^p &
\frac{(\mathfrak{d}^jf)(-\tau^{-1})}{j!} \Bigl(\frac{1}{2\pi\ii}\frac{1}{-\tau^{-1}+h(\tau)^{-1}}\Bigr)^j \\
&= 
\tau^{k}\sum_{j=0}^p\sum_{m=0}^{p-j}  \frac{(\mathfrak{d}^{j+m}f)(\tau)}{j!m!} \Bigl(\frac{1}{2\pi \ii}\frac{1}{-\tau+\tau^2h(\tau)^{-1}}\Bigr)^j \Bigl(\frac{1}{2\pi \ii}\frac{1}{\tau}\Bigr)^m \\
&= 
\tau^{k}\sum_{\ell=0}^p \frac{(\mathfrak{d}^{\ell}f)(\tau)}{\ell!} \Bigl(\frac{1}{2\pi \ii}\frac{-\tau+\tau^2h(\tau)^{-1}+\tau}{\tau(-\tau+\tau^2h(\tau)^{-1})}\Bigr)^\ell  \\
&= 
\tau^{k}\sum_{\ell=0}^p \frac{(\mathfrak{d}^{\ell}f)(\tau)}{\ell!} \Bigl(\frac{1}{2\pi \ii}\frac{1}{-h(\tau)+\tau}\Bigr)^\ell =0.
\end{align}
The second statement follows from the fact that for quasimodular forms $g$ with real Fourier coefficients one has $g(-\overline{\tau})=\overline{g(\tau)}$. \end{proof}

Extend the action of~$\sltwoz$ on~$\mathfrak{h}$ to an action of~$\mathrm{GL}_2(\z)$ by
\[\gamma\tau = \frac{a\overline{\tau}+b}{c\overline{\tau}+d} \qquad \text{if } \det\gamma=-1\]
for $\gamma=\left(\begin{smallmatrix} a & b \\ c & d \end{smallmatrix}\right)\in \mathrm{GL}_2(\z)$ and $\tau \in \mathfrak{h}$. Then, we find that the vector $\vec{h}=(h_1,\ldots,h_p)$ is a meromorphic vector-valued equivariant form for~$\mathrm{GL}_2(\z)$: 
\begin{cor}\label{cor:meromorphic}
Let $U$ be a simply connected open subset of $\mathfrak{h}$ for which the $p$ solutions of~\eqref{eq:defh} are distinct. Then, one can choose solutions $h_1,\ldots,h_p:U\to \c$ of~\eqref{eq:defh} such that
\begin{enumerate}[{\upshape (i)}]
    \item for all $\tau \in U$ and $\gamma\in \mathrm{GL}_2(\z)$ the solutions $h_j(\gamma \tau):\gamma U\to \c$ are meromorphic;
    \item if $\Gamma\leq \mathrm{GL}_2(\z)$ such that $\Gamma U \subseteq U$, then there exists a homomorphism $\sigma:\Gamma\to \mathfrak{S}_p$ such that for all $\tau \in U$ and $\gamma\in \Gamma$ one has\[ h_j(\gamma\tau)=\gamma\, h_{\sigma(\gamma)j}(\tau);\]
    \item\label{cor:meromorphic(iii)} $f$ has a zero at $\gamma\tau$ if and only if $h_j(\tau)=\lambda(\gamma)$ for some $j$. 
\end{enumerate}
\end{cor}
\begin{proof}
By the implicit function theorem, there exist $p$ meromorphic solutions~$h_j$ on~$U$, which, by construction, satisfy the third property. By the previous proposition for all $\gamma \in \mathrm{GL}_2(\z)$ we have $h_j(\gamma\tau)=\gamma\, h_{\sigma(\gamma) j}(\tau)$ for some $\sigma(\gamma) \in \mathfrak{S}_p$, possibly depending on $\tau$. However, by continuity of $h_j$ on $U$, we find $\sigma(\gamma)$ does not depend on $\tau$ for $\gamma \in \Gamma$.  In particular, $h_j:\gamma U\to \c$ is meromorphic and $\sigma$ is easily seen to be a homomorphism. 
\end{proof}

We often make use of the fact that $h_j$ is a vector-valued equivariant function in the following way. 
Write 
\[C=\left(\begin{smallmatrix} -1 & 0 \\ 0 & 1 \end{smallmatrix}\right),\quad
S=\left(\begin{smallmatrix} 0 & -1 \\ 1 & 0 \end{smallmatrix}\right),\quad
T=\left(\begin{smallmatrix} 1 & 1 \\ 0 & 1 \end{smallmatrix}\right)
\]
for complex \emph{C}onjugation, \emph{S}piegeln (reflecting) in the unit disc and \emph{T}ranslation. Then,
\[
\mathrm{PGL}_2(\z) = \langle C,S,T \mid C^2=1,(CT)^2=1,(CS)^2=1, S^2=1, (ST)^3=1\rangle.
\]
Given $\gamma \in \mathrm{PGL}_2(\z)$, let $\c^\gamma$ be the set of $\tau\in \c$ such that $\gamma \tau=\tau$. Then,
for all $\tau \in U\cap \c^\gamma$ we have
\[ h_j(\tau) = h_j(\gamma\tau) = \gamma h_{\sigma(\gamma)j}(\tau).\]
Hence, if $\sigma(\gamma)=e$, then $h_{j}(\tau)\in \c^\gamma.$ For example, as a corollary we get the following.
\begin{lem}\label{lem:equivariance} 
Given the solutions $h_j$ and $\sigma:\Gamma\to \mathfrak{S}_p$ as in \cref{cor:meromorphic}, write $\Gamma_j=\{\gamma \in \Gamma \mid \sigma(\gamma)j=j\}$. Then,
\begin{enumerate}[{\upshape (i)}]
    \item\label{it:R} $h_j(\frac{n}{2}+\r\mathrm{i}) \in \frac{n}{2}+\r\mathrm{i}$ if $n\in \z$ is such that $CT^{-n}\in \Gamma_j$\,,
    \item\label{it:C} $|h_j(\tau)|=1$ for $|\tau|=1$ if $CS\in \Gamma_j$\,,
    \item\label{it:rho} $h_j(\rho)\in \{\pm \rho\}$ if $ST\in \Gamma_j$\,.
\end{enumerate}
\end{lem}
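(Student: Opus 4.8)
The plan is to exploit the equivariance property from \cref{cor:meromorphic}(ii), namely $h_j(\gamma\tau)=\gamma\, h_{\sigma(\gamma)j}(\tau)$, in the special case where $\tau$ is a fixed point of $\gamma$ and $\gamma\in\Gamma_j$, i.e.\ $\sigma(\gamma)j=j$. In that situation the identity collapses to $h_j(\tau)=\gamma\, h_j(\tau)$, which says exactly that $h_j(\tau)$ is a fixed point of $\gamma$ acting on $\c$ (via the $\mathrm{PGL}_2(\z)$-action, using the conjugate action for $\det=-1$). So the three statements are obtained by identifying the relevant $\gamma\in\mathrm{PGL}_2(\z)$, computing its fixed-point set $\c^\gamma$, and checking that the hypothesis on $z$ forces $z$ itself to lie on $\c^\gamma$ so that the equivariance can be applied pointwise along that locus.

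For part \ref{it:R}: the matrix $CT^{2n}=\left(\begin{smallmatrix}-1 & -2n\\ 0 & 1\end{smallmatrix}\right)$ acts on $\tau$ by $\tau\mapsto -\overline{\tau}-2n$, whose fixed-point set in $\c$ is $\{\Re(\tau)=-n\}$, i.e.\ the vertical line $-n+\r\ii$; after the harmless relabelling $n\mapsto -n$ (or adjusting the sign convention on $T$) this is $\tfrac n2$... more carefully, one uses $CT^{n}$ if one wants $\Re = -n/2$; in any case the fixed locus is a vertical line of the stated real part. If $z=\tfrac n2+\r\ii$ lies on this line and $CT^{2n}\in\Gamma_j$, then $h_j(z)=CT^{2n}h_j(z)$, forcing $h_j(z)$ onto the same vertical line. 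For part \ref{it:C}: the element $CS=\left(\begin{smallmatrix}-1&0\\0&1\end{smallmatrix}\right)\left(\begin{smallmatrix}0&-1\\1&0\end{smallmatrix}\right)=\left(\begin{smallmatrix}0&-1\\-1&0\end{smallmatrix}\right)$ has $\det=-1$ and acts by $\tau\mapsto \frac{-1}{-\overline{\tau}}=\frac{1}{\overline{\tau}}$, whose fixed points are $\{|\tau|=1\}$; so on the unit circle $h_j(z)=CS\,h_j(z)$ gives $|h_j(z)|=1$. For part \ref{it:rho}: $ST=\left(\begin{smallmatrix}0&-1\\1&0\end{smallmatrix}\right)\left(\begin{smallmatrix}1&1\\0&1\end{smallmatrix}\right)=\left(\begin{smallmatrix}0&-1\\1&1\end{smallmatrix}\right)$ has order $3$ in $\mathrm{PSL}_2(\z)$ and fixes exactly $\rho$ and $\bar\rho=-1-\rho$ (the roots of $\tau^2+\tau+1=0$); within $\mathfrak h$ only $\rho$, but as a point of $\c$ the fixed set is $\{\rho,\bar\rho\}$, and $\bar\rho\equiv -\rho$ under the translation $T$ that we are implicitly allowed to shift by (recall the whole setup is $1$-periodic, so $h_j$ takes values only up to the lattice $\z$, hence $h_j(\rho)\in\{\rho,\bar\rho\}=\{\pm\rho\}\pmod{\z}$); this yields $h_j(\rho)\in\{\pm\rho\}$.

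The main technical point — and the only real obstacle — is being careful about the $\mathrm{PGL}_2(\z)$-action conventions: which matrices act antiholomorphically (those with $\det=-1$, via $\overline{\tau}$ as defined just before \cref{cor:meromorphic}), and that $\sigma$ is a genuine homomorphism so that "$\gamma\in\Gamma_j$'' is well-behaved under the relations $C^2=(CT)^2=(CS)^2=S^2=(ST)^3=1$. One must verify that $h_j$ is actually \emph{defined} (i.e.\ holomorphic/meromorphic, not running into a branch issue) along the relevant fixed locus — this is where one needs $U$ to contain the line segment or arc in question and the $p$ solutions to stay distinct there, which is part of the standing hypothesis in \cref{cor:meromorphic}; on a generic such locus this holds, and the finitely many bad points are handled by continuity. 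Once these bookkeeping matters are settled, each of the three assertions is literally one line: substitute the fixed point into the equivariance identity and read off the constraint. Since the excerpt already records "Hence, if $\sigma(\gamma)=e$, then $h_j(\tau)\in\c^\gamma$'' and says "we have proven the following lemma,'' the intended proof is exactly this specialization, so I would simply present the three fixed-point computations and note that the conclusions follow immediately from the displayed identity $h_j(\tau)=\gamma\,h_{\sigma(\gamma)j}(\tau)$.
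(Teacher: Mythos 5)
Your proposal is exactly the paper's argument: the lemma is obtained by specializing the equivariance $h_j(\gamma\tau)=\gamma\, h_{\sigma(\gamma)j}(\tau)$ at a fixed point $\tau$ of $\gamma\in\Gamma_j$, so that $h_j(\tau)\in\c^\gamma$, and then computing the fixed-point sets of the three reflections/rotations --- which is precisely the paragraph preceding the lemma in the paper. The only caveat is in your reconciliations with the printed statement: the fixed set of $ST$ is $\{\rho,\rho^2\}$ with $\rho^2=\overline{\rho}=-\rho-1$ (and this is what the paper actually uses later, in \cref{tab:1}), so the claim that ``$h_j$ takes values only up to the lattice $\z$'' is not literally true and is not needed; likewise the mismatch between $CT^{2n}$ and the line $\Re=\tfrac n2$ is a normalization slip in the statement itself, which you correctly diagnose as an exponent/sign convention rather than a mathematical obstacle.
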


\paragraph{Depth 1}
For a quasimodular form $f=f_0+E_2f_1$ of depth~$1$, we have $h:\mathfrak{h}\to \c$ is a holomorphic equivariant function, i.e.,
\[ h(\gamma\tau) = \gamma h(\tau)\]
for all $\tau\in \mathfrak{h}$ and $\gamma \in \mathrm{GL}_2(\z)$. In fact, we can write~$h$ as 
\[ h(\tau) \=  \tau + \frac{12}{2\pi \ii}\frac{f_1(\tau)}{f(\tau)} \= \tau\frac{f|S(\tau)}{f(\tau)} .\]

\begin{remark} There are several additional interesting properties of equivariant functions~$h$ of which we do not make use in this work. Among those are: \vspace{-5pt}
\begin{enumerate}[{\upshape(i)}]\itemsep0pt
    \item The Schwarzian~derivative~$\displaystyle \{h(\tau),\tau\}$ is a meromorphic modular form of weight~$4$. This follows directly from the properties of the Schwarzian~derivative, see, e.g., \cite{ES12}.
    \item The derivative $h'$ equals $Ff^{-2}$ for some (holomorphic) modular form $F$ of weight~$2k$ (where $k$ is the weight of $f$). Explicitly,
    \[ F \= f_0^2 \+ 12\,f_0\,\vartheta(f_1)\meno 12\,\vartheta(f_0)\,f_1 +f_1^2\,E_4\,,\]
    where $\vartheta$ denotes the Serre~derivative. This was observed and proven in \cite[Section~5.3]{GO20} in the case~$f$ is the derivative of a modular form. \qedhere
\end{enumerate}
\end{remark}

\paragraph{Example in depth 1}
\begin{exmp}
If $f=g'=\frac{1}{2\pi \ii}\pdv{g}{\tau}$ with $g$ a modular form of weight~$k$, then
$h(\tau) = \tau + \frac{k}{2\pi\ii}\frac{g(\tau)}{g'(\tau)}$ (to the study of which \cite{ES12} is devoted). In particular, for $f=\Delta'=\Delta E_2$ we find
$h(\tau) = \tau+ \frac{12}{2\pi \ii} \frac{1}{E_2},$ which is also the equivariant function associated to $E_2$. 
\end{exmp}

\paragraph{Examples in higher depth}

\begin{exmp}\label{ex:h2}
Consider $f = E_4 + E_2^2$. Then the $h_j$ for $j=1, 2$ are solutions of the equation
$$0 = (2\pi \ii)^2(E_4(\tau) + E_2(\tau)^2)(\tau - h(\tau))^2 +  48  \pi \ii E_2(\tau) (\tau - h(\tau)) + 144. $$
From this
$$h_j(\tau) = \tau + \frac{12}{2 \pi \ii} \frac{E_2(\tau) +(-1)^j\sqrt{ -E_4(\tau)}}{(E_4(\tau) + E_2(\tau)^2)},$$
which (for an appropriate choice of the square root) is meromorphic on every simply connected domain~$U$ not containing an $\sltwoz$-translate of~$\rho$. For example, for
\[ U\=\mathrm{int}(\mathcal{F}\cup S\mathcal{F}) \= \{ z\in \mathfrak{h} \mid -\tfrac{1}{2}<\Re(z)<\tfrac{1}{2}, |z-1|>1 \text{ and } |z+1|>1\} \]
we have that $\Gamma=\langle C,S\rangle$ satisfies $\Gamma U=U$ and $\sigma:\Gamma\to \mathfrak{S}_2$ is given by
$\sigma(C) = (1\,2), \sigma(S)=(1\,2).$ In particular, $CS\in \ker \sigma$. 
Hence, by \cref{lem:equivariance} we have $|h_j(z)|=1$ if $|z|=1$. However, it is not the case that, e.g., $h_j(-\frac{1}{2}+\r\ii)\in -\frac{1}{2}+\r\ii$.  
\end{exmp}

\section{Zeros in other fundamental domains \texorpdfstring{$(\lambda<\infty)$}{}}\label{sec:5}
By definition of the functions $h_i$ we have
\begin{equation}
N_\lambda(f)-N_\infty(f) \= \frac{1}{2 \pi \ii}\sum_{i}\int_{\partial\mathcal{F}} \frac{h_i'(\tau)}{h_i(\tau)-\lambda} \,\dd \tau \= \frac{1}{2 \pi}\sum_{i}\mathrm{Im}\int_{\partial\mathcal{F}} \frac{h_i'(\tau)}{h_i(\tau)-\lambda} \,\dd \tau ,
\end{equation}
where the second equality holds as the number of zeros of a function is a real number. Hence, the value of~${N_\lambda(f)-N_\infty(f)}$ is determined by the variation of the argument of the $h_i-\lambda$. 
\begin{proof}[Proof of \cref{thm:main1}] Given $\lambda\in \r$, we consider the variation of the argument of $h_j(\tau)-\lambda$ for all $j$. Note that, when moving along $\tau\in\partial\mathcal{F}$, by \cref{cor:meromorphic} the functions $h_i$ are continuous and piecewise meromorphic. 
We change the contour $\partial \mathcal{F}$ to a family of contours $\mathscr{C}_\epsilon$ such that for $\epsilon>0$ sufficiently small there are no poles on the contour of integration, and that the functions $h_j$ only take finitely many real values. (For example, we could define $\mathscr{C}_\epsilon$ to be the shift of the contour $\partial \mathcal{F}$ by $(|\Re(z)|+\sqrt{3}\Re(z)\ii)\epsilon+\frac{1}{2}\epsilon^2$. Note that for $\epsilon\to 0$ the value of the integral over the shifted contour converges to the desired value~$N_\lambda$.)

The functions~$\tau\mapsto h_j(\tau)$ intersect the real axis only a finite number of times as $\tau$ goes over the (shifted) contour.
Write $\lambda_1(\epsilon)<\ldots<\lambda_{n(\epsilon)}(\epsilon)$ for the intersection points for all functions $h_1,\ldots,h_p$ (here $n(\epsilon)$ may also depend on $\epsilon$). Moreover, write $\lambda_1,\ldots,\lambda_n$ for the limiting values of $\lambda_i(\epsilon)$ as $\epsilon\to 0$.  As $h_j-\lambda$ is just a horizontal shift of $h_j$,  given $\lambda,\lambda'\in \r$, the functions $h_j-\lambda$ and $h_j-\lambda'$ admit the same variation of the argument if there is no $\ell$ such that $\lambda<\lambda_\ell<\lambda'$ or $\lambda'<\lambda_\ell<\lambda$. 
Hence, in that case $N_\lambda-N_\infty=N_{\lambda'}-N_\infty$. 
Moreover, for $\lambda>\lambda_n$ the variation of the argument is~$0$. Hence,
$N_\lambda - N_\infty=0$ for $\lambda>\lambda_n$. We conclude that if we define the elements of $\mathscr{I}$ to be 
\[ (-\infty,\lambda_1),\{\lambda_1\},(\lambda_1,\lambda_2),\ldots,\{\lambda_n\},(\lambda_n,\infty)\]
the statement follows. (In case $\lambda_i=\pm\infty$ simply leave out the corresponding sets.)
\end{proof}

\paragraph{Depth 1}
For quasimodular forms of depth $1$, by \cref{lem:equivariance} we have
\begin{itemize}
    \item $h(\tfrac{1}{2}+it) \in \tfrac{1}{2}+i\r$ for $t\in \r$;
    \item $|h(\tau)|=1$ if $|\tau|=1$.
\end{itemize}
Hence, the only possible values of $\lambda_i$ in the above proof are $\pm \frac{1}{2}, \pm 1$ and $\pm \infty$. Therefore, we obtain the following corollary of \cref{thm:main1}.
\begin{cor}
\label{cor:5.2}
For a quasimodular form of depth $1$ with real Fourier coefficients, there exist constants $N_{[0,\frac{1}{2})}(f), N_{(\frac{1}{2},1)}(f), N_{(1,\infty)}(f)$ such that
\[ N_\lambda(f) = 
\begin{cases}
N_{[0,\frac{1}{2})}(f) & |\lambda|\in (0,\frac{1}{2})\\
N_{(\frac{1}{2},1)}(f) & |\lambda|\in(\frac{1}{2},1)\\
N_{(1,\infty)}(f) & |\lambda|\in (1,\infty).
\end{cases}
\]
\end{cor}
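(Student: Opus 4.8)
The final statement to be proved is \cref{cor:5.2}. The plan is to deduce it directly from \cref{thm:main1} by identifying exactly which values $\lambda_i$ can occur as limiting intersection points of the functions $h_j$ with the real axis when $f$ has depth $1$.

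First I would observe that for a depth-$1$ quasimodular form $f=f_0+E_2 f_1$ there is a single equivariant function $h:\mathfrak{h}\to\c$ (so $p=1$ and the permutation $\sigma$ is trivial on all of $\mathrm{PGL}_2(\z)$), given explicitly by $h(\tau)=\tau+\frac{12}{2\pi\ii}\frac{f_1(\tau)}{f(\tau)}$. Since $\sigma$ is trivial, $\Gamma_j=\Gamma$ for every subgroup, so \cref{lem:equivariance} applies with no restriction: parts \eqref{it:R}, \eqref{it:C}, \eqref{it:rho} give that $h$ maps the vertical lines $\Re(\tau)=\pm\tfrac12$ into themselves, maps the unit circle $|\tau|=1$ into itself, and sends $\rho$ to $\pm\rho$. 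Running the proof of \cref{thm:main1}, the contour $\partial\mathcal{F}$ (suitably shifted to $\mathscr{C}_\epsilon$) consists, up to the shift, of pieces of $\mathcal{L}$, $\mathcal{C}$, $\mathcal{R}$; on $\mathcal{L}\cup\mathcal{R}$ the value $h(\tau)$ has real part $\pm\tfrac12$, hence $h(\tau)$ is real only when $h(\tau)=\pm\tfrac12$; on $\mathcal{C}$ we have $|h(\tau)|=1$, hence $h(\tau)$ is real only when $h(\tau)=\pm1$. Passing to the limit $\epsilon\to0$, the only possible limiting intersection points $\lambda_i$ are therefore among $\{-1,-\tfrac12,\tfrac12,1\}$ (together with $\pm\infty$, which are handled separately in \cref{thm:main1} and absorbed into the unbounded intervals).

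Next I would feed this list into the conclusion of \cref{thm:main1}: the intervals $I\in\mathscr{I}$ on which $N_\lambda(f)$ is constant are obtained by cutting $\r$ at the points $\lambda_i$, so $N_\lambda(f)$ is constant on each of $(-\infty,-1)$, $(-1,-\tfrac12)$, $(-\tfrac12,\tfrac12)$, $(\tfrac12,1)$, $(1,\infty)$ (plus the singletons, which I will not need). Finally I would invoke the symmetry already recorded in the paper: replacing $\gamma$ by $\gamma\left(\begin{smallmatrix}-1&0\\0&1\end{smallmatrix}\right)$ sends $\lambda(\gamma)$ to $-\lambda(\gamma)$, and by \cref{prop:hequiv}(ii) (equivalently, the fact that $f$ has real Fourier coefficients, so $g(-\overline\tau)=g(\tau)$) the number of zeros of $f$ in $\gamma\mathcal{F}$ equals that in $(\gamma C)\mathcal{F}$ after the harmless shift by $T$; hence $N_\lambda(f)=N_{-\lambda}(f)$. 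This collapses the five intervals to the three stated ones, indexed by $|\lambda|\in(0,\tfrac12)$, $|\lambda|\in(\tfrac12,1)$, $|\lambda|\in(1,\infty)$, and defines the constants $N_{[0,\tfrac12)}(f)$, $N_{(\tfrac12,1)}(f)$, $N_{(1,\infty)}(f)$ accordingly, which is exactly the claim.

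The main obstacle, and the only point requiring any real care, is the interchange of the limit $\epsilon\to0$ with the list of real values attained by $h$: on the genuine boundary $\partial\mathcal{F}$ the function $h$ can have poles (zeros of $f$) lying on $\mathcal{L}$, $\mathcal{C}$ or $\mathcal{R}$, so one cannot argue on $\partial\mathcal{F}$ directly — this is precisely why the shifted contour $\mathscr{C}_\epsilon$ was introduced in the proof of \cref{thm:main1}. On $\mathscr{C}_\epsilon$ the function $h$ is no longer forced to respect the three symmetry loci exactly, so its real values $\lambda_i(\epsilon)$ need not equal $\pm\tfrac12,\pm1$; one must check that as $\epsilon\to0$ every such $\lambda_i(\epsilon)$ that does not escape to $\pm\infty$ converges into $\{-1,-\tfrac12,\tfrac12,1\}$, using continuity of $h$ and the fact that on the limiting contour $h$ takes real values only on those loci. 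I would make this precise by noting that any limit point $\lambda_\infty$ of $\lambda_i(\epsilon)$ is attained by $h$ at some boundary point (or is a pole, giving $\pm\infty$), and at a boundary point of $\mathcal{F}$ in $\mathfrak{h}$ the value $h(\tau)$ is real forces $\tau$ onto $\mathcal{L}\cup\mathcal{C}\cup\mathcal{R}$ and hence $h(\tau)\in\{\pm\tfrac12,\pm1\}$; the cusp contributes only $\lambda=\pm\infty$.
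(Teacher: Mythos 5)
Your proposal is correct and follows essentially the same route as the paper: for depth $1$ the permutation $\sigma$ is trivial, so \cref{lem:equivariance} forces $h$ to preserve $\mathcal{L}\cup\mathcal{R}$ and $\mathcal{C}$, whence the only possible values $\lambda_i$ in the proof of \cref{thm:main1} are $\pm\tfrac12,\pm1,\pm\infty$. Your additional remarks on the $\epsilon\to0$ limit and on the symmetry $N_\lambda=N_{-\lambda}$ spell out details the paper leaves implicit, but do not change the argument.
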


Next, by relating $N_{\lambda}(f)$ to $N_{-\frac{1}{\lambda}}(f)$ and by using the above properties of $h$, we prove the following statement, which finishes the proof of \cref{thm:main2}. Recall $z_1,\ldots, z_m$ are the zeros of~$f_1$ such that $\Re z_i=-\frac{1}{2}$ and $\Im z_i>\frac{1}{2}\sqrt{3}$, counted with multiplicity and ordered by imaginary part, and $z_0=\rho$. 
Moreover, recall $r(f_1)$ denotes the sign of the first non-zero Taylor coefficient of~$f_1$ (see~\eqref{eq:r}), and  
$s(f)=\sgn a_0(f)$ if~$f$ does not vanish at infinity, and $s(f)=-\sgn a_0(f_1)$ else. 
Also, $w(z_0)=2$ if $z_0$ equals $\rho,\ii$ or $-\frac{1}{2}+\ii \infty$, and $w(z)=1$ for all other $z\in \mathcal{F}$.
\begin{thm} 
\label{thm:5.3}
Let $f=f_0+E_2f_1$ be an irreducible quasimodular form of depth $1$. If $\lambda\in \r$ with $\frac{1}{2}<|\lambda|<2$, then
\[N_\lambda(f)+N_{-\frac{1}{\lambda}}(f) \= \biggl \lfloor \frac{k}{6} \biggr \rfloor.\]
Moreover, if $|\lambda|<\frac{1}{2}$ or $|\lambda|>2$ we have
\begin{equation}
\label{EqnThm:5.3}
N_\lambda(f)+N_{-\frac{1}{\lambda}}(f)
\=\biggl\lceil\frac{k}{6}\biggr\rceil \meno  r(f_1)\sum_{j=0}^{m} \frac{(-1)^j}{w(z_j)}\, \sgn f(z_j) - \frac{1}{2}(-1)^{m+1}r(f_1)\,s(f).
\end{equation}
\end{thm}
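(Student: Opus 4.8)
\textbf{Proof plan for Theorem~\ref{thm:5.3}.}

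The plan is to compute $N_\lambda(f)+N_{-1/\lambda}(f)$ by relating it to $N_\infty(f)$ via the contour-integral formula at the start of \cref{sec:5} and exploiting the equivariance $h(-1/\tau)=-1/h(\tau)$ from \cref{prop:hequiv}(i). First I would observe that the substitution $\tau\mapsto -1/\tau$ maps $\partial\mathcal{F}$ in a way that relates the variation of the argument of $h-\lambda$ along one part of the boundary to that of $h-(-1/\lambda)$ along another part; more precisely, $h(\tau)-\lambda$ and $\tau^{-1}(h(\tau)^{-1}-(-1/\lambda)^{-1})\cdot(\text{sign factor})$ have closely linked arguments, so that summing the two quantities $N_\lambda-N_\infty$ and $N_{-1/\lambda}-N_\infty$ causes cancellation of the contributions from the circular arc $\mathcal{C}$ and from $S\mathcal{C}$, leaving only boundary terms on the vertical segments $\mathcal{L},\mathcal{R}$ together with a clean count on the arc. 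This reduces the problem to: (a) evaluating $2N_\infty(f)$, which by \cref{prop:Ninf} gives the $\lfloor k/6\rfloor$ (or, after correcting a floor-to-ceiling discrepancy explained below, $\lceil k/6\rceil$) term; and (b) controlling the variation of the argument of $h(\tau)-\lambda$ as $\tau$ runs over $\mathcal{L}$, where by \cref{lem:equivariance}(i) we have $h(\mathcal{L})\subseteq -\tfrac12+\r\ii$, so $h(\tau)-\lambda$ can only cross the real axis.

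Next I would analyze that vertical-line variation. Since $h(\tau)=\tau+\tfrac{12}{2\pi\ii}\tfrac{f_1(\tau)}{f(\tau)}$ sends $\mathcal{L}$ into the vertical line $\Re=-\tfrac12$, the real part of $h-\lambda$ on $\mathcal{L}$ is the constant $-\tfrac12-\lambda$, which is nonzero once $|\lambda|\neq\tfrac12$; hence the argument of $h(\tau)-\lambda$ stays in an open half-plane and the only contribution to its variation comes from the endpoints of $\mathcal{L}$, namely $\rho$ and the cusp $-\tfrac12+\ii\infty$, i.e. from $z_0=\rho$ and the behaviour at infinity. The key computation is then to track $\Im h(\tau)$ — equivalently the sign of the imaginary part of $h-\lambda$, which does not depend on $\lambda$ — at the points where $h$ meets the real axis: these are exactly the points $\tau$ on $\mathcal{L}$ where $f_1$ would force $h$ off the line, i.e. near the zeros $z_1,\dots,z_m$ of $f_1$ on $\mathcal{L}$ (where $h$ has a pole, so $h-\lambda$ winds) and at $z_0=\rho$. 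At each such $z_j$ the local sign of $h(\tau)-\lambda$'s imaginary part as $\tau$ passes $z_j$ is governed by $r(f_1)$ (the sign of the leading natural Taylor coefficient of $f_1$ at $\rho$, propagated along $\mathcal{L}$ via \cref{lem:r}) together with $\sgn f(z_j)$, which explains the sum $r(f_1)\sum_j (-1)^j w(z_j)^{-1}\sgn f(z_j)$; the weights $w(z_j)$ account for $\rho$ and $\ii$ being half-counted, and the alternating sign $(-1)^j$ records that consecutive crossings alternate in direction. The final term $-\tfrac12(-1)^{m+1}r(f_1)s(f)$ comes from the endpoint contribution at the cusp: the sign $s(f)$ records whether $f$ (or $f_1$, if $f$ vanishes at $\ii\infty$) is positive there, and the half-integer weight reflects that the cusp is an endpoint, not an interior crossing, of $\mathcal{L}$; the parity $(-1)^{m+1}$ ensures the cusp contribution is ``in phase'' with the last interior crossing $z_m$.

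The main obstacle I expect is the bookkeeping of signs and the floor-versus-ceiling distinction between the two regimes $\tfrac12<|\lambda|<2$ and ($|\lambda|<\tfrac12$ or $|\lambda|>2$). The point is that when $|\lambda|$ crosses the value $1$ (which lies in the first regime) or the value $\tfrac12$ or $2$, the point $\lambda$ (resp.\ $-1/\lambda$) crosses a value attained by $h$ on the arc $\mathcal{C}$ (resp.\ $S\mathcal{C}$), changing $N_\lambda$ by a half-integer and hence changing $N_\lambda+N_{-1/\lambda}$ by an integer — precisely the difference $\lceil k/6\rceil-\lfloor k/6\rfloor$ when $6\nmid k$. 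Pinning this down requires carefully combining the case analysis of \cref{prop:Ninf} (the four residues of $k$ mod $12$, using irreducibility to exclude $k\equiv4\ (6)$) with the direction in which $h$ traverses the unit circle and the vertical lines, and checking the boundary cases $h(\rho)\in\{\pm\rho\}$ and $h(\ii)\in\{\text{unit circle}\}$ from \cref{lem:equivariance}(ii),(iii) separately. I would organize this as a single table of the relevant signs, verify the identity in one generic case, and then check that each special position ($z_j=\ii$, $v_\rho(f_1)>0$, $f$ vanishing at the cusp) only shifts the already-accounted-for weights $w(z_j)$ and the sign $s(f)$, leaving the stated formula intact.
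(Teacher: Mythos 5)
Your overall strategy --- the contour formula for $N_\lambda+N_{-1/\lambda}-2N_\infty$ from the start of \cref{sec:5}, the split of $\partial\mathcal{F}$ into the arc and the vertical segments, and the use of the equivariance of $h$ --- is the same as the paper's, but two of your central claims are wrong and would derail the computation. First, the poles of $h$ on $\mathcal{L}$ are the zeros of $f$, not the zeros $z_1,\dots,z_m$ of $f_1$: at a zero of $f_1$ one has $h(z_j)=z_j$, a finite point. Consequently the variation of $\arg(h-\lambda)$ along $\mathcal{L}$ is \emph{not} determined by the endpoints $\rho$ and $-\tfrac12+\ii\infty$ alone; it is a sum over the segments between consecutive poles of $h$ (which must be indented by small semicircles, whose contributions cancel in pairs against those on $\mathcal{R}$), and the role of the $z_j$ is to flip the sign of $f_1/f$ --- hence the sign of $\Im h$ near each pole --- between consecutive poles. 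That is the actual source of the alternating factor $(-1)^j$ and of $\sgn f(z_j)$; your mechanism (``$h$ has a pole at $z_j$, so $h-\lambda$ winds'') would produce incorrect bookkeeping, and it contradicts your own preceding claim that only the endpoints of $\mathcal{L}$ contribute.

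Second, the constant term cannot come from ``evaluating $2N_\infty(f)$ by \cref{prop:Ninf}'': that proposition gives $2N_\infty(f)=\lfloor k/6\rfloor-(-1)^{v_\rho(f_1)}r(f_1)\sum_j(-1)^j\sgn\widehat f(\theta_j)$, and the alternating sum over the arc zeros of $f_1$ does not occur in the statement of the theorem, so your plan leaves it uncancelled. What actually happens is that the contribution of $\mathcal{C}$ to the boundary integral is computed exactly, using \eqref{eq:prodh} and the substitution $\tau\mapsto-1/\tau$, to be $\frac{k-1}{6}-2N_\infty(f)$; the term $2N_\infty(f)$ therefore cancels identically against the left-hand side and is never evaluated, and $\lfloor k/6\rfloor$ (resp.\ the $\lceil k/6\rceil$ together with the $z_j$- and cusp-terms when $|\lambda|<\tfrac12$ or $|\lambda|>2$) arises as $\frac{k-1}{6}+\frac16\sgn\Im h(\rho)$ plus the vertical contributions. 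Relatedly, the thresholds $\tfrac12$ and $2$ come from $\lambda$ or $-1/\lambda$ crossing the real values $\mp\tfrac12$ attained by $h$ on $\mathcal{L}$ and $\mathcal{R}$, not from values attained on the arc, where $|h|=1$.
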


\noindent\textit{Proof.}
As before, we have 
\begin{equation}\label{eq:integral}
N_\lambda(f)+N_{-\frac{1}{\lambda}}(f)-2N_\infty(f) = \frac{1}{2 \pi}\Im \int_{\partial\mathcal{F}} \frac{h'(\tau)}{h(\tau)-\lambda}+\frac{h'(\tau)}{h(\tau)+\frac{1}{\lambda}}  \,\dd \tau .
\end{equation}

\begin{wrapfigure}{r}{0.37\textwidth} %this figure will be at the right
    \centering \vspace{-10pt}
    \includegraphics[clip, trim=2.5cm 19.1cm 13.8cm 3.5cm]{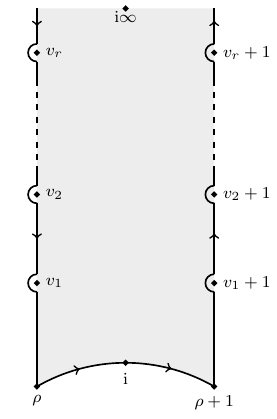}\vspace{-7pt}
    \caption{Contour of integration}\label{fig:contour}\vspace{-15pt}
    %\vspace{7pt}
\end{wrapfigure}

We split this integral in several pieces, and compute the contribution of each piece separately. This argument resembles the one of \cref{prop:Ninf}, as well as the proof found in \cite[Section~5.6]{GO20}. 

%\clearpage
\step{Setup}

Let $z_1, \ldots, z_m$ be the zeros of $f_1$ on $\mathcal{L} \backslash \{ \rho \}$ and let $v_1, \ldots, v_r$ be the zeros of $f$ on $\mathcal{L}$, all ordered by imaginary part. We assume $r\geq 1$, and at the end of the argument verify the proof goes through if $r=0$. Moreover, without loss of generality, we assume $|\lambda|>1$. 

The finite poles of~$h$ are exactly the finite zeros of~$f$.
We fix $\epsilon>0$ sufficiently small. Let $L_\epsilon$ be the punctured left half line
\[
[\rho, v_1 - \ii \epsilon]\, \cup\,[v_1 + \ii \epsilon, v_2 - \ii \epsilon] \,\cup \,\ldots \,\cup\, [v_{r}+\ii \epsilon, -\tfrac{1}{2} + \ii \infty],
\]
and define the punctured right half line by $R_\epsilon=L_\epsilon+1$. The line segment $[\rho, v_1 - \ii \epsilon]$ (as well as its shift on $R_\epsilon$) is referred to as the \emph{lower vertical segment}, whereas  $[v_{r}+\ii \epsilon, -\frac{1}{2} + \ii \infty]$ a called an \emph{upper vertical segment}.

Recall that by definition of $N_\lambda$, we include all zeros/poles of $h(\tau)-\lambda$ and $h(\tau)+\frac{1}{\lambda}$ on $\mathcal{L}$ in the integral~\eqref{eq:integral}, but not those on $\mathcal{R}$. Hence, for each zero $v_i$ we introduce a semicircle $C_i$ around $v_i$ of radius $\epsilon$ on the left of $\mathcal{L}$, as well as the semicircle $C_i+1$. The boundary of $\mathcal{F}$ then consists of $\mathcal{C}, L_\epsilon, R_\epsilon$ and the semicircles $C_i$ and $C_i+1$ for all $i$. 

\step{Circular segment~$\mathcal{C}$}
Recall $|h(\tau)|=1$ if $|\tau|=1$. Hence, $h$ has no poles on $\mathcal{C}$. 
Using the expression~\eqref{eq:prodh} above, we write
\begin{align}
\frac{1}{2 \pi}&\Im\int_{\mathcal{C}} \frac{h'(\tau)}{h(\tau)-\lambda}+\frac{h'(\tau)}{h(\tau)+\frac{1}{\lambda}}  \,\dd \tau \\
&= \frac{1}{2 \pi }\mathrm{Im}\int_{\mathcal{C}} \pdv{}{\tau}\log \bigl((h(\tau)-\lambda)(h(\tau)+\tfrac{1}{\lambda})\bigr) \,\dd \tau  \\
&= \frac{1}{2 \pi}\mathrm{Im}\int_{\mathcal{C}} \pdv{}{\tau} \Bigl((\tau-\lambda)^{1-k}(\tau+\frac{1}{\lambda})^{1-k} \frac{f(\gamma\tau)f(\gamma S\tau)}{f(\tau)^2}\Bigr) \,\dd \tau,
\end{align}
where $\gamma=\left(\begin{smallmatrix} a & b \\ c & d \end{smallmatrix}\right)\in \sltwoz$ is such that $\lambda=-\frac{d}{c}$, and $S=\left(\begin{smallmatrix} 0 & 1 \\ -1 & 0 \end{smallmatrix}\right)$. Note that $\gamma S = \left(\begin{smallmatrix} -b & a \\ -d & c \end{smallmatrix}\right)$ and $-\frac{1}{\lambda} = -\frac{c}{-d}.$

First, we compute (recall $|\lambda|>1$ by assumption)
\begin{align}
\frac{1}{2 \pi }&\mathrm{Im}\int_{\mathcal{C}} \pdv{}{\tau}\log\Bigl((\tau-\lambda)^{1-k}\bigl(\tau+\frac{1}{\lambda}\bigr)^{1-k} \Bigr) \,\dd \tau \\
&=\frac{k-1}{2 \pi}\mathrm{Re}\int_{\pi/3}^{2\pi/3} -\frac{\frac{1}{\lambda}e^{\ii \theta}}{1-\frac{1}{\lambda}e^{\ii\theta}}+\frac{1}{1+\frac{1}{\lambda}e^{-\ii \theta}} \dd \theta \\
&= \frac{k-1}{2 \pi}\mathrm{Re}\int_{\pi/3}^{2\pi/3} \Bigl( 1 - 2\sum_{n \geq 1} \frac{1}{\lambda^{2n-1}} \cos((2n+1)\theta) + 2\ii \sum_{m \geq 1} \frac{1}{\lambda^{2m}} \sin(2m \theta) \Bigr) \dd \theta\\
& = \frac{k-1}{6}. 
\end{align}

Next, we show that the following expression is actually independent of $\gamma$:
\begin{align}
\frac{1}{2 \pi}&\mathrm{Im}\int_{\mathcal{C}} \pdv{}{\tau}\log\Bigl(\frac{f(\gamma\tau)f(\gamma S\tau)}{f(\tau)^2}\Bigr) \,\dd \tau \\
&= 
\frac{1}{2 \pi}\mathrm{Im}\int_{\mathcal{C}} \frac{1}{(c\tau+d)^2}\frac{f'(\gamma\tau)}{f(\gamma\tau)} + \frac{1}{(-d\tau+c)^2}\frac{f'(\gamma S\tau)}{f(\gamma S\tau)} - 2\frac{f'(\tau)}{f(\tau)} \,\dd \tau.
\end{align}
Applying the coordinate transformation $\tau\mapsto -\frac{1}{\tau}$ to the second term in the integrand, and using~\eqref{eq:Ninfty1} for the last term, this equals
\begin{align}
\frac{1}{2 \pi}\mathrm{Im}\int_{\mathcal{C}} \frac{1}{(c\tau+d)^2}\frac{f'(\gamma\tau)}{f(\gamma\tau)} - \frac{1}{(c\tau+d)^2}\frac{f'(\gamma \tau)}{f(\gamma \tau)} - 2\frac{f'(\tau)}{f(\tau)}\,\dd \tau 
&=-2 N_\infty(f).
\end{align}

Hence, the contribution of $\mathcal{C}$ equals
\[ \frac{k-1}{6} -2 N_\infty(f).\]

\clearpage
\step{Lower vertical segments}
Now, first assume that $f_1$ admits no zeros on the lower vertical segment~$[\rho, v_1 - \ii \epsilon]$.
By \cref{lem:equivariance}(\ref{it:R}) we know that $h(\tfrac{1}{2}+\ii t) \in \tfrac{1}{2}+\ii \r$ for all $t \in \r$. Also, by \cref{lem:equivariance}(\ref{it:rho}) we have $h(\rho)\in \{\pm \rho\}$.
Hence, as $\tau$ moves on $[\rho, v_1 - \ii \epsilon]$ the function $h(\tau)$ moves from $\rho$ or $\rho^2$ to $\ii \infty$ or $-\ii\infty$. We have three possibilities for the combined sign of $\Im h(\rho)$ and $\Im \bigl( h(\rho+\epsilon\ii)-\rho \bigr)$, depicted in~\cref{tab:1}. (Note that if the first is negative, the second is necessarily negative as well. As in this segment there are no zeros of $f_1$ or $f$, we know that in the case~(--,--) eventually $h$ tends to $-\ii\infty$.).
\begin{table}\begin{center}
\begin{tabular}{c c c l}
(+,+) & (+,--) & (--,--) \\\hline\\[-5pt]
\begin{tikzpicture}[scale=0.8]
\draw[thick, ->] (-0.5,1.8) -- (-0.5,1.5);
\draw[thick] (-0.5,0.866) -- (-0.5,1.5);
\draw[thick] (0.5,1.8) -- (0.5,1.5);
\draw[thick,->] (0.5,0.866) -- (0.5,1.5);
\draw[dashed] (0.5,0.866) -- (0.4,0);
\draw[dashed] (-0.5,0.866) -- (0.4,0);
\draw[dashed] (0.5,0.866) -- (5/2,0);
\draw[dashed] (-0.5,0.866) -- (5/2,0);
\node[circle,fill=black,inner sep=0pt,minimum size=3pt,label=above left:{$\rho$}] at (-0.5,0.866) {};
\node[circle,fill=black,inner sep=0pt,minimum size=3pt,label=above right:{$\rho+1$}] at (0.5,0.866) {};
\node[circle,fill=black,inner sep=0pt,minimum size=3pt,label=below:{$\frac{1}{\lambda}$}] (a) at (0.4,0) {};
\node[circle,fill=black,inner sep=0pt,minimum size=3pt,label=below:{$\lambda$}] (a) at (5/2,0) {};
\node[circle,fill=black,inner sep=0pt,minimum size=0pt,label=below left:{$\phantom{\rho}$}] at (-0.5,-0.866) {};
\end{tikzpicture}&
%%4
\begin{tikzpicture}[scale=0.8]
\draw[thick, ->] (-0.5,-1.8) -- (-0.5,-1.5);
\draw[thick] (-0.5,0.866) -- (-0.5,-1.5);
\draw[thick] (0.5,-1.8) -- (0.5,-1.5);
\draw[thick,->] (0.5,0.866) -- (0.5,-1.5);
\draw[dashed] (0.5,0.866) -- (0.4,0);
\draw[dashed] (-0.5,0.866) -- (0.4,0);
\draw[dashed] (0.5,0.866) -- (5/2,0);
\draw[dashed] (-0.5,0.866) -- (5/2,0);
\node[circle,fill=black,inner sep=0pt,minimum size=3pt,label=above left:{$\rho$}] at (-0.5,0.866) {};
\node[circle,fill=black,inner sep=0pt,minimum size=3pt,label=above right:{$\rho+1$}] at (0.5,0.866) {};
\node[circle,fill=black,inner sep=0pt,minimum size=3pt,label=below left:{$\frac{1}{\lambda}$}] (a) at (0.4,0) {};
\node[circle,fill=black,inner sep=0pt,minimum size=3pt,label=below:{$\lambda$}] (a) at (5/2,0) {};
\node[circle,fill=black,inner sep=0pt,minimum size=0pt,label=below left:{$\phantom{\rho}$}] at (-0.5,-0.866) {};
\end{tikzpicture}&
\begin{tikzpicture}[scale=0.8]
\draw[thick, ->] (-0.5,-1.8) -- (-0.5,-1.5);
\draw[thick] (-0.5,-0.866) -- (-0.5,-1.5);
\draw[thick] (0.5,-1.8) -- (0.5,-1.5);
\draw[thick,->] (0.5,-0.866) -- (0.5,-1.5);
\draw[dashed] (0.5,-0.866) -- (0.4,0);
\draw[dashed] (-0.5,-0.866) -- (0.4,0);
\draw[dashed] (0.5,-0.866) -- (5/2,0);
\draw[dashed] (-0.5,-0.866) -- (5/2,0);
\node[circle,fill=black,inner sep=0pt,minimum size=3pt,label=below left:{$\rho^2$}] at (-0.5,-0.866) {};
\node[circle,fill=black,inner sep=0pt,minimum size=3pt,label=below right:{$\rho^2+1$}] at (0.5,-0.866) {};
\node[circle,fill=black,inner sep=0pt,minimum size=3pt,label=above:{$\frac{1}{\lambda}$}] (a) at (0.4,0) {};
\node[circle,fill=black,inner sep=0pt,minimum size=3pt,label=above:{$\lambda$}] (a) at (5/2,0) {};
\end{tikzpicture}
\\[5pt]
\end{tabular}\end{center}\vspace{-5pt} 
\caption{The three possibilities for the graph of $h(\tau)$ for $\tau$ in a small neighborhood of~$\rho$ (resp.\ $\rho+1$) along the left (resp.\ right) vertical line segment of $\partial \mathcal{F}$, given
$\displaystyle\bigl(\sgn\Im h(\rho),\sgn \lim\nolimits_{\epsilon \downarrow 0} (\Im h(\rho+\epsilon\ii)-\rho)\bigr)\in \{\pm\}^2$.}\vspace{5pt}
\label{tab:1}
\end{table}

Hence, the variation of the argument of $h-\lambda$ along $[\rho, v_1 - \ii \epsilon]$ and $[\rho, v_1 - \ii \epsilon]+1$ equals the (oriented) angle between $h(\rho+1), \lambda$ and $h(\rho)$, as shown in the same table. 
In particular, it is an exercise in Euclidean geometry that the sum of the oriented angles for~$\lambda$ and~$-\frac{1}{\lambda}$ only depends on whether $|\lambda|<2$ or not (recall $|\lambda|>1$ by assumption). In \cref{tab:2} we displayed the contribution of each of the cases in \cref{tab:1}.
\begin{table}\begin{center}
\begin{tabular}{l|cccc}
& (+,+) & (+,--)  &\hspace{1pt} (--,--) \\[2pt]\hline\\[-5pt]
$|\lambda|>2$ & $\frac{1}{6} $ & $-\frac{5}{6}$ 
 & $-\frac{1}{6}$ \\[5pt]\hline\\[-5pt]
$1 <|\lambda|<2$ & $\frac{1}{6} $ & $\phantom{-}\frac{1}{6}$ 
& $-\frac{1}{6}$ 
\end{tabular}
\end{center}
\caption{Variation of the argument of the lower vertical segments in several cases (see \cref{tab:1}).}
\label{tab:2}
\end{table}
Correspondingly, the contribution to the variation of the argument equals 
\[ \frac{1}{6} \sgn \Im h(\rho) + \delta_{|\lambda|>2}\Bigl(-\frac{1}{2} \sgn \Im h(\rho) + \frac{1}{2} \sgn \lim\nolimits_{\epsilon \downarrow 0} (\Im h(\rho+\epsilon\ii)-\rho)\Bigr) \]

Observe that
\[\sgn \lim\nolimits_{\epsilon \downarrow 0} (\Im h(\rho+\epsilon\ii)-\rho) \= - \sgn \lim\nolimits_{\epsilon \downarrow 0} \sgn( f_1(\rho+\epsilon\ii)) \sgn(f(\rho))\]
as $h(\tau) \=  \tau + \frac{12}{2\pi \ii}\frac{f_1(\tau)}{f(\tau)}$. 
Hence, under the assumption $f_1$ has no zeros on the lower vertical segment and by \cref{lem:r}, its contribution equals
\[ \frac{1}{6} \sgn \Im h(\rho)\+\delta_{|\lambda|>2} \Bigl( -\frac{1}{2} \sgn \Im h(\rho) \meno  \frac{r(f_1)}{2} \sgn f(\rho) \Bigr) . \]

Now, suppose $f_1$ admits $p$ zeros on the lower vertical segment. Then, as $f$ admits no zeros on this segment, $h(\tau)$ crosses the line $\tau$ precisely $p$ times. If $p$ is even, this does not alter the variation of the argument, but if $p$ is odd, then $h(\tau)$ changes sign if $\tau$ tends to the pole $v_1$. 

Note that this does not affect the variation of the argument of $h(\tau)-\mu$ if $|\mu|>1$. Suppose $I$ is a line segment of $\mathcal{L}$ for which $h(\tau)-\mu$ tends to $\pm \ii \infty$ or to $0$ on the two boundary points of $I$. In that case the variation of the argument on $I$ equals $v$, whereas the variation of the argument on the corresponding line segment on $\mathcal{R}$ is $-v$. We conclude that if $|\mu|>1$, the only contribution for the variation of the argument is displayed in \cref{tab:2}. 

Hence, the contribution of the lower vertical segment equals
\[\frac{1}{6} \sgn \Im h(\rho) \+\delta_{|\lambda|>2} \Bigl( -\frac{1}{2} \sgn \Im h(\rho) \meno  \frac{r(f_1)}{2} \sgn f(\rho)  \meno r(f_1)\sum_{j} (-1)^j \sgn f(z_j) \Bigr), \]
where the sum is over all $j$ such that $z_j$ lies in the lower vertical segment. Note that by the factor $r(f_1)(-1)^j$ we keep track of the sign of $f_1$ at $z_j$.

\step{Vertical segments between two poles}
Similar as in the previous case (now there are no boundary terms), we find
\[ -\delta_{|\lambda|>2}\,r(f_1) \sum_{j} (-1)^j \sgn f(z_j) \]
where the sum is over all $j$ such that $z_j$ lies between two poles of $f$.

\step{Semicircles centered at the poles}
Note that for sufficiently small $\epsilon$, we have that the value $h-\lambda$ on these semicircles is arbitrarily large (say, bigger than $|\lambda|+1$ in absolute value). Moreover, the contours of $h-\lambda$ on such a semicircle $C_i$ and $C_i+1$ differ only by $1$. Hence, as $\epsilon\to 0$, the contributions of the corresponding semicircles $C_i$ and $C_i+1$ of $\partial\mathcal{F}$ (which admit an opposite orientation) cancel in pairs.

\step{Upper vertical segments}
As before, the variation of the argument vanishes, except for $h(\tau)+\frac{1}{\lambda}$ if $|\lambda|>2$. Again, in this case, we have the contribution
\[ -\delta_{|\lambda|>2}\,r(f_1) \sum_{j} (-1)^j \sgn f(z_j), \]
where the sum is over all $j$ such that $z_j$ lies in the upper vertical segment. This is also the only contribution, except in the following exceptional case. It may be that $h(\tau)$ is smaller than $\tau$ in imaginary value for $\tau=\frac{1}{2}+\ii t$ with $t$ tending to infinity, but still converging to $\ii \infty$. If this is the case, we have to add a contribution of $+1$. By considering the Fourier expansion of $h(\tau)-\tau$, we see this can only happen if $f$ has no zero at infinity (else, $h$ goes to $\pm \ii \infty$ at exponential rate). Moreover, the first non-zero Fourier coefficient of $f_1/f$ should be positive if $h(\tau)-\tau\leq 0$ for $\tau=-\frac{1}{2}+\ii t$. In case $f$ does not vanish at infinity, we have
\[ \lim_{t\to \infty}\sgn\Im(h(-\tfrac{1}{2}+\ii t)-(-\tfrac{1}{2}+\ii t)) =- \lim_{t\to \ii\infty}\sgn(f_1(-\tfrac{1}{2}+\ii t)\, f(-\tfrac{1}{2}+\ii t)),\]
Note that $\lim_{t\to \infty}\sgn(f_1(-\tfrac{1}{2}+\ii t) = (-1)^m r(f_1)$.
We found that this special contribution equals
\[ \begin{cases}
\frac{1}{2}+\frac{1}{2} (-1)^m r(f_1) \lim_{t \to \infty} \sgn f(-\frac{1}{2} + \ii t ) 
& \text{ if } f \left(-\frac{1}{2} + \ii \infty \right) \neq 0\\
0 & \text{ if } f \left(-\frac{1}{2} + \ii \infty \right) = 0, \\
\end{cases}
\]
As $\lim_{t \to \infty} \sgn f(-\frac{1}{2} + \ii t ) = \sgn a_0(f)$ if $f$ does not vanish at the cusp, and $(-1)^{m}r(f_1)=\sgn a_0(f_1)$, by definition of $s(f)$, we find that the total contribution of the upper vertical segment for $|\lambda|>2$ equals
\[-r(f_1)\sum_{j=1}^m (-1)^j \sgn f(z_j) +\frac{1}{2}- \frac{1}{2} (-1)^{m+1} r(f_1)\,s(f)  
\]

\step{Total contribution}
Adding all contributions for $1<|\lambda|<2$, we obtain
$$\frac{k-1}{6}+\frac{1}{6} \sgn \Im h(\rho).$$
Note that $f_1(\rho)=0$ if $k\equiv 0 \, (6)$ and $f_0(\rho)=0$ if $k\equiv 2 \, (6).$ Hence,
\[ h(\rho) = \rho + \begin{cases} 0 & k\equiv 0 \, (6) \\ \frac{12}{2\pi \ii}\frac{1}{E_2(\rho)} & k\equiv 2\, (6).\end{cases}\]
Therefore,
\[ \sgn \Im h(\rho) \= \begin{cases} 1 & k \equiv 0 \, (6) \\ -1 & k \equiv 2 \, (6),\end{cases}
\]
We conclude that for $1<|\lambda|<2$, the variation of the argument equals $\lfloor \frac{k}{6}\rfloor.$

Adding all contributions for $|\lambda|>2$, we obtain
\[
\biggl\lceil\frac{k}{6}\biggr\rceil - \frac{r(f_1)}{2} \sgn f(\rho) -r(f_1)\sum_{j=1}^m (-1)^j \sgn f(z_j) - \frac{1}{2} (-1)^{m+1} r(f_1)\,s(f).
\]

\step{$f$ has no zeros on $\mathcal{L}$}
In this case, $h$ has no poles on $\mathcal{L}$ and $\mathcal{R}$. Therefore, the variation of the argument along $\mathcal{L}$ and $\mathcal{R}$ only depends on the values $h(\rho)$ and $h(-\frac{1}{2} + \ii \infty)$.
The image of $h$ on~$\mathcal{L}$ and~$\mathcal{R}$  is summarized in \cref{tab:3}.
\begin{table}\begin{center}
\begin{tabular}{c c c c}
(+,+) & (+,--) & (--,+) & (--,--) \\\hline\\[-5pt]
\begin{tikzpicture}[scale=0.8]
\draw[thick, ->] (-0.5,1.8) -- (-0.5,1.5);
\draw[thick] (-0.5,0.866) -- (-0.5,1.5);
\draw[thick] (0.5,1.8) -- (0.5,1.5);
\draw[thick,->] (0.5,0.866) -- (0.5,1.5);
\draw[dashed] (0.5,0.866) -- (0.4,0);
\draw[dashed] (-0.5,0.866) -- (0.4,0);
\draw[dashed] (0.5,0.866) -- (5/2,0);
\draw[dashed] (-0.5,0.866) -- (5/2,0);
\node[circle,fill=black,inner sep=0pt,minimum size=3pt,label=above left:{$\rho$}] at (-0.5,0.866) {};
\node[circle,fill=black,inner sep=0pt,minimum size=3pt,label=above right:{$\rho+1$}] at (0.5,0.866) {};
\node[circle,fill=black,inner sep=0pt,minimum size=3pt,label=below:{$\frac{1}{\lambda}$}] (a) at (0.4,0) {};
\node[circle,fill=black,inner sep=0pt,minimum size=3pt,label=below:{$\lambda$}] (a) at (5/2,0) {};
\node[circle,fill=black,inner sep=0pt,minimum size=0pt,label=below left:{$\phantom{\rho}$}] at (-0.5,-0.866) {};
\end{tikzpicture}&
%%4
\begin{tikzpicture}[scale=0.8]
\draw[thick, ->] (-0.5,-1.8) -- (-0.5,-1.5);
\draw[thick] (-0.5,0.866) -- (-0.5,-1.5);
\draw[thick] (0.5,-1.8) -- (0.5,-1.5);
\draw[thick,->] (0.5,0.866) -- (0.5,-1.5);
\draw[dashed] (0.5,0.866) -- (0.4,0);
\draw[dashed] (-0.5,0.866) -- (0.4,0);
\draw[dashed] (0.5,0.866) -- (5/2,0);
\draw[dashed] (-0.5,0.866) -- (5/2,0);
\node[circle,fill=black,inner sep=0pt,minimum size=3pt,label=above left:{$\rho$}] at (-0.5,0.866) {};
\node[circle,fill=black,inner sep=0pt,minimum size=3pt,label=above right:{$\rho+1$}] at (0.5,0.866) {};
\node[circle,fill=black,inner sep=0pt,minimum size=3pt,label=below left:{$\frac{1}{\lambda}$}] (a) at (0.4,0) {};
\node[circle,fill=black,inner sep=0pt,minimum size=3pt,label=below:{$\lambda$}] (a) at (5/2,0) {};
\node[circle,fill=black,inner sep=0pt,minimum size=0pt,label=below left:{$\phantom{\rho}$}] at (-0.5,-0.866) {};
\end{tikzpicture}&
 %%2
 \begin{tikzpicture}[scale=0.8]
 \draw[thick, ->] (-0.5,1.8) -- (-0.5,1.5);
 \draw[thick] (-0.5,-0.866) -- (-0.5,1.5);
 \draw[thick] (0.5,1.8) -- (0.5,1.5);
 \draw[thick,->] (0.5,-0.866) -- (0.5,1.5);
 \draw[dashed] (0.5,-0.866) -- (0.4,0);
 \draw[dashed] (-0.5,-0.866) -- (0.4,0);
 \draw[dashed] (0.5,-0.866) -- (5/2,0);
 \draw[dashed] (-0.5,-0.866) -- (5/2,0);
 \node[circle,fill=black,inner sep=0pt,minimum size=3pt,label=below left:{$\rho^2$}] at (-0.5,-0.866) {};
 \node[circle,fill=black,inner sep=0pt,minimum size=3pt,label=below right:{$\rho^2+1$}] at (0.5,-0.866) {};
 \node[circle,fill=black,inner sep=0pt,minimum size=3pt,label=above left:{$\frac{1}{\lambda}$}] (a) at (0.4,0) {};
 \node[circle,fill=black,inner sep=0pt,minimum size=3pt,label=above:{$\lambda$}] (a) at (5/2,0) {};
 \end{tikzpicture}&
%3
\begin{tikzpicture}[scale=0.8]
\draw[thick, ->] (-0.5,-1.8) -- (-0.5,-1.5);
\draw[thick] (-0.5,-0.866) -- (-0.5,-1.5);
\draw[thick] (0.5,-1.8) -- (0.5,-1.5);
\draw[thick,->] (0.5,-0.866) -- (0.5,-1.5);
\draw[dashed] (0.5,-0.866) -- (0.4,0);
\draw[dashed] (-0.5,-0.866) -- (0.4,0);
\draw[dashed] (0.5,-0.866) -- (5/2,0);
\draw[dashed] (-0.5,-0.866) -- (5/2,0);
\node[circle,fill=black,inner sep=0pt,minimum size=3pt,label=below left:{$\rho^2$}] at (-0.5,-0.866) {};
\node[circle,fill=black,inner sep=0pt,minimum size=3pt,label=below right:{$\rho^2+1$}] at (0.5,-0.866) {};
\node[circle,fill=black,inner sep=0pt,minimum size=3pt,label=above:{$\frac{1}{\lambda}$}] (a) at (0.4,0) {};
\node[circle,fill=black,inner sep=0pt,minimum size=3pt,label=above:{$\lambda$}] (a) at (5/2,0) {};
\end{tikzpicture}
\\[5pt]
\end{tabular}\end{center}\vspace{-15pt} 
\caption{The four possibilities for the graph of $h(\tau)$ for $\tau$ along the left (resp.\ right) vertical line segment of $\partial \mathcal{F}$, given
$\displaystyle\bigl(\sgn\Im h(\rho), \lim_{t \to \infty} \Im \sgn h(-\tfrac{1}{2} + \ii t )  \bigr)\in \{\pm\}^2$.}\vspace{15pt}
\label{tab:3}
\end{table}
\begin{table}\begin{center}
\begin{tabular}{l|cccc}
& (+,+) & (+,--)  &\hspace{1pt} (--,+) & (--,--) \\[2pt]\hline\\[-5pt]
$|\lambda|>2$ & $\frac{1}{6} $ & $-\frac{5}{6}$ 
 & $\phantom{-}\frac{5}{6}$ 
 & $-\frac{1}{6}$ \\[5pt]\hline\\[-5pt]
$1 <|\lambda|<2$ & $\frac{1}{6} $ & $\phantom{-}\frac{1}{6}$ 
& $-\frac{1}{6}$ 
& $-\frac{1}{6}$ 
\end{tabular}
\end{center}\vspace{-10pt} 
\caption{Variation of the argument along the left (resp. right) vertical line segment of $\partial \mathcal{F}$ in several cases (see \cref{tab:3}).}
\label{tab:4}
\end{table}
The contributions to the variation of the argument are given in the following \cref{tab:4}. Note that the contributions in this table yield the same final formula for $N_{\lambda}(f) + N_{-\frac{1}{\lambda}}(f)$.  \qed

\begin{proof}[Proof of \cref{thm:crit}] The result follows from \cref{thm:main2}, as we explain now. 
First, let $\phi$ be the unique modular form such that $f:=g'/\phi$ is irreducible (if $g$ has only simple zeros, and no zeros at the cusp, then $\phi=1$). 

Recall that the sign of the derivative of a real-valued differentiable function in two consecutive zeros of this function is opposite. Hence, for two consecutive zeros of~$g$, the function
\[ 
f \= \frac{g'}{\phi} 
\]
changes sign, i.e., $(-1)^j \sgn f(z_j)$ and $(-1)^j \sgn \widehat{f}(\theta_j)$ are independent of $j$. Comparing with the behaviour of $f_1$ around $\rho$ one obtains
\[ (-1)^j \sgn f(z_j) = - r(f_1) = (-1)^j \sgn \widehat{f}(\theta_j)\]
for all $j>0$. 

Moreover, we have
\[\sgn f(\rho) \= \begin{cases}
r(f_1) & k(f)\equiv 2 \mod 6\\
-r(f_1) & k(f)\equiv 0 \mod 6,
\end{cases}  \]
where $k(f)$ denotes the weight of $f$. 
Namely, in case $k(f)\equiv 2 \mod 6$ we have $f_0(\rho)=0$ and $f(\rho) = \sgn f_1(\rho)$, which is non-zero by definition of $\phi$. Moreover, in case $k(f)\equiv 0 \mod 6$, we have 
\[ \sgn f(\rho) \= \sgn\Bigl(\frac{g'}{\phi}(\rho)\Bigr) \= \sgn\Bigl(\frac{(f_1 \phi)'}{\phi}\Bigr) \= \sgn\Bigl(f_1'(\rho)+f_1(\rho)\frac{\phi'}{\phi}(\rho)\Bigr).\]
Note that $\phi(\rho)\neq 0$ if $k(f)\equiv 0 \mod 6$, and $f_1(\rho)=0$. Hence, we find
\[ \sgn f(\rho) \= \sgn f_1'(\rho) \= -r(f_1).\]
Note that $a_0(f)=0$ if and only if $a_0(g)\neq 0$. 
In case $g$ has $n\geq 1$ roots of the cusps, observe that $f$ and $n \frac{g}{\phi} E_2$ have the same (non-zero) constant term. Also, observe that $f_1$ equals $\frac{g}{\phi}$ up to a constant (being the weight of $g$, divided by $12$). Hence, in that case, we have 
\[ s(f) = \sgn(a_0(f)) = \sgn(a_0(g/\phi)) = \sgn(a_0(f_1)) = (-1)^{m} r(f_1),\]
where the last equality holds as $f_1$ does not vanish at infinity. Hence, 
\[-\frac{1}{2}(-1)^{m+1} r(f_1)s(f) \= \begin{cases} -\frac{1}{2} & a_0(g)\neq 0 \\
\frac{1}{2} & a_0(g)=0 .\end{cases}\]

Finally, write $\delta'$ and $\epsilon'$ for the order of $E_4$ and $E_6$ in $g'$. Note that $k(f)\equiv k+2 \mod 6$ if $g$ does not have repeated zeros at $\rho$ and $\ii$ (here $k$ is the weight of $g$, or $k+2$ is the weight of $k'$). More generally, 
\[ k+2 \:\equiv\: k(f)+4\delta'+6\epsilon' \mod 6.\]
Observe the following identity
\[ \frac{1}{2}\biggl \lfloor \frac{k+2}{6} -\frac{2}{3}\delta'\biggr \rfloor \+ \frac{1}{3}\delta' = \frac{k}{12} + \frac{1}{6} \delta_{g(\rho)=0}\,, \]
where $\delta_{g(\rho)=0}$ is $1$ precisely if $g(\rho)=0$ (if not, then $k\equiv 0 \mod 6$).  

We conclude that
\begin{align}
N_{(1,\infty]}(g')  &\= \frac{1}{2}\biggl \lfloor \frac{k+2-4\delta'-6\epsilon'}{6} \biggr \rfloor \+ \frac{2\delta'+3\epsilon'}{6} \+  C(g)\+ \frac{1}{6} \delta_{g(\rho)=0} \\
&\= \frac{k}{12} \+  C(g)\+ \frac{1}{3} \delta_{g(\rho)=0}\,,
\end{align}
\begin{align}
N_{(1,\infty]}(g') + N_{(\frac{1}{2},1)}(g') &\=  \biggl\lfloor \frac{k+2-4\delta'-6\epsilon'}{6} \biggr\rfloor \+ \frac{2\delta'+3\epsilon'}{3} 
\=  \frac{k}{6}+\frac{1}{3}\delta_{g(\rho)=0}
\end{align}
and $N_{(1,\infty]}(g') + N_{[0,\frac{1}{2})}(g')$ equals
\begin{align}
&\biggl\lceil \frac{k+2-4\delta'-6\epsilon'}{6} \biggr\rceil \+ \frac{2\delta'+3\epsilon'}{3}
\+ \frac{1}{2}\delta_{k(f)\equiv 4\,(6)}- \frac{1}{2}\delta_{k(f)\equiv 0\,(6)}+
|\mathcal{L}(g)| - \frac{1}{2} \\
\= &
\biggl\lfloor \frac{k+2}{6}-\frac{2}{3}\delta' \biggr\rfloor \+ \frac{2}{3}\delta'
\+L(g) \\
\= &
\frac{k}{12}
\+L(g)+\delta_{g(\rho)=0}\,. \qedhere
\end{align} 
\end{proof}

\begin{proof}[Proof of \cref{cor:upperbound}]
First of all, for $f$ as in \cref{thm:main2} we have
\[N_{(1,\infty]}(f) \:\leq\: \frac{1}{2}\biggl \lfloor \frac{k}{6} \biggr \rfloor + n' + \frac{1}{2}\delta_{k\equiv 0\,(6)}, \]
where $n'$ counts the weighted number of zeros of $f_1$ on the part of the unit circle with angle $\theta$ such that  $\frac{\pi}{2}\leq \theta<\frac{2\pi}{3}$. Now $n'\leq \frac{k-2}{12}-\frac{1}{3}\delta_{k\equiv 0\,(6)}$, as $f_1$ has precisely $\frac{k-2}{12}$ zeros (of which at least one in $\rho$ if $k\equiv 0 \mod 6$). Hence,
\[N_{(1,\infty]}(f) \:\leq\: \frac{1}{2}\biggl \lfloor \frac{k}{6} \biggr \rfloor + \frac{k-2}{12} + \frac{1}{6}\delta_{k\equiv 0\,(6)} \= \biggl \lfloor \frac{k}{6} \biggr \rfloor \= \dim \widetilde{M}_k^{\leq 1}-1,\]
where we used that $k\equiv 0, 2 \mod 6.$ Now, as $f$ is not divisible by $E_4$ by assumption, this upper bound also holds true if $f$ is reducible. Similarly, we obtain the upper bound for $N_{(\frac{1}{2},1)}(f)$. 

Next, for $f$ as in \cref{thm:main2} we have
\[ N_{(1,\infty]}(f) + N_{[0,\frac{1}{2})}(f) \:\leq \: \biggl\lceil\frac{k}{6}\biggr\rceil + m +\delta_{k\equiv 0\,(6)}. \]
Here, the term $\delta_{k\equiv 0\,(6)}$ comes from the fact that $-r(f_1)\frac{(-1)^0}{2}\sgn f(\rho)$ equals $-\frac{1}{2}$ if $k\equiv 2\,(6)$, as in that case $r(f_1)=\sgn f_1(\rho) = \sgn f(\rho)$. For $k\equiv 0 \,(6)$ we have $-r(f_1)\frac{(-1)^0}{2}\sgn f(\rho)\leq \frac{1}{2}.$
Now, similar as before, $N_{(1,\infty]}(f) \:\geq\: \frac{1}{2} \lfloor \frac{k}{6} \rfloor - n' - \frac{1}{2}\delta_{k\equiv 0\,(6)}.$
Hence, 
\[N_{[0,\frac{1}{2})}(f) \:\leq \: \biggl\lceil\frac{k}{6}\biggr\rceil- \frac{1}{2}\biggl \lfloor \frac{k}{6} \biggr \rfloor + n' + m  +  \frac{3}{2}\delta_{k\equiv 0\,(6)}
\]
Now, similarly, $n'+m\leq \frac{k-2}{12}-\frac{1}{3}\delta_{k\equiv 0\,(6)}$, so that
\[N_{[0,\frac{1}{2})}(f) \:\leq \: \biggl\lceil\frac{k}{6}\biggr\rceil- \frac{1}{2}\biggl \lfloor \frac{k}{6} \biggr \rfloor + \frac{k-2}{12} + \frac{7}{6}\delta_{k\equiv 0\,(6)} \= \biggl \lfloor \frac{k}{6} \biggr \rfloor +1 \= \dim \widetilde{M}_k^{\leq 1}\]
as $k\equiv 0,2\mod 6$. This implies the corollary. 
\end{proof}

\paragraph{Examples}
\begin{exmp}
Let $f$ be as in \cref{thm:main2} and assume $f_1$ has only zeros in the interior of~$\mathcal{F}$ and at infinity. Write $a_0(f)$ for the constant term at infinity of $f$, and $a(f_1)$ for the first non-zero Fourier coefficient of $f_1$. Then, in case $a_0(f)=0$ (as is the case when $f$ is the derivative of a modular form), or if $\sgn(a_0(f))=-\sgn(a(f_1))$, a direct evaluation of the result implies that 
\[ N_\lambda(f) = N_\lambda(f_1). \]
The situation alters slightly if $\sgn(a_0(f))=\sgn(a(f_1))$ (as is the case if $f=E_2$); in that case we find
\[ N_\lambda(f) \= \begin{cases}  
N_\lambda(f_1) & |\lambda|\in (\frac{1}{2},\infty) \\
N_\lambda(f_1)+1 & |\lambda|\in (0,\frac{1}{2}).
\end{cases}\]
\end{exmp}

\begin{exmp}\label{ex:intro2}
We return again to the example in the introduction. Applying Equations~\eqref{eq:N1/2} and~\eqref{eq:N0} in Theorem~\ref{thm:main2} and using the computations in Example~\ref{ex:intro}, we find
\begin{align}
    N_{(\frac{1}{2}, 1)}(f) \=  -1 \+ \left \lfloor \frac{36}{6} \right \rfloor \= 5.
\end{align}
Moreover, as $r(f_1)=1$, $m=0$, $f(\rho)<0$ and $s(f)=1$, we obtain
\begin{align}
    N_{[0,\frac{1}{2})}(f) \= -1 \+ \left \lceil \frac{36}{6} \right \rceil 
    \meno - \frac{1}{2} \meno \frac{-1}{2} \= 6.
\end{align}
\end{exmp}

\paragraph{Example in higher depth}
\begin{exmp}\label{ex:criticalE2}
Let $f=E_2^2-E_4$. Its zeros are the critical points of $E_2$. We have
$$h_j(\tau) = \tau + \frac{12}{2 \pi \ii} \frac{E_2(\tau) +(-1)^j\sqrt{E_4(\tau)}}{( E_2(\tau)^2-E_4(\tau) )},$$
and for $U=\{z\in \mathfrak{h}\mid \Im z>\frac{1}{2}\sqrt{3}\}$ we find 
$\sigma(C)=\sigma(T)=e$ (see \cref{cor:meromorphic} for the definition of~$\sigma$).
In particular, $h(-\frac{1}{2}+\ii t) \in -\frac{1}{2}+\ii \r$ for all $t\in \r$. Using the same ideas as in the proof of \cref{thm:5.3}, we find that the contribution of $\mathcal{C}$ equals $\frac{k-p}{6} -2 N_\infty(f)$ (with $k=4,p=2$). For the other contributions, we check that the proof goes through for both $E_2 \pm 
\sqrt{E_4}$ (which is not a \emph{holomorphic} quasimodular form). That is, take $h$ for $f_0=\pm \sqrt{E_4}$ and $f_1=1$. Then, the function $h$ for $E_2+\sqrt{E_4}=2+O(q)$ behaves as $(-,+)$ in \cref{tab:4}, whereas the function $h$ for $E_2-\sqrt{E_4}=-144q+O(q^2)$ behaves as $(-,-)$ in the same table. Hence,
\[ N_\lambda(f) +N_{-\frac{1}{\lambda}}(f)\= \begin{cases}
\frac{4-2}{6}+\frac{5}{6}-\frac{1}{6} \= 1 & |\lambda|<\frac{1}{2} \text{ or } |\lambda|>2 \\
\frac{4-2}{6}-\frac{1}{6}-\frac{1}{6} \= 0 & \frac{1}{2}<|\lambda|<2.\end{cases}\]

Observe that, in contrast to the case where the depth is 1, we no longer have that $|h_j(z)|=1$ for $|z|=1$. In particular,  $h_1(z)$ and $h_2(z)$ intersect the real line for $z\in \mathcal{C}$ in the  value $v$ and $\frac{1}{v}$ respectively, given by
\[ \frac{1}{v} = 0.180008\ldots, \qquad v=5.555295\ldots \]
As the value of $N_\lambda$ for positive $\lambda$ only changes at $\lambda=\frac{1}{2},\frac{1}{v},v$, and $N_\infty(f)\geq 1$ (because $\ii\infty$ is a zero of $f$), we conclude
\[ N_\lambda(f) \= \begin{cases}
1 & \frac{1}{v}<|\lambda|<\frac{1}{2} \text{ or } |\lambda|>v \\
0 & |\lambda|<\frac{1}{v} \text{ or } \frac{1}{2}<|\lambda|<v. \\
\end{cases}\]
\end{exmp}

\paragraph{Another result on the critical points of $E_2$}
Again, let $f=E_2^2-E_4$. Let 
\[\mathcal{F}_0(2) \defis \{z \in \mathfrak{h} \mid 0 \leq \Re z\leq 1 \text{ and } |z-\tfrac{1}{2}|\geq \tfrac{1}{2}\}\cup\{\ii \infty\}\]
be (the closure of) a fundamental domain for $\Gamma_0(2)$. In \cite{CL19} it is shown that
\begin{align}
\sum_{\tau \in \gamma\mathcal{F}_0(2)} \nu_\tau(f) = 1 \label{eq:criticalE2}
\end{align}
for all $\gamma \in \Gamma_0(2)$. In particular, the number of critical points of $E_2$ is constant in every $\gamma$-translate of $\mathcal{F}_0(2)$, but depends on $\lambda(\gamma)$ in every  $\gamma$-translate of $\mathcal{F}$ (see the previous example). Why are the zeros of a quasimodular form for $\sltwoz$ `better' distributed with respect to $\Gamma_0(2)$?

To get some more insight, we sketch how the proof of \cref{thm:5.3} can be adapted in order to give an alternative proof of~\eqref{eq:criticalE2}. Let
\[ N_\lambda^{(2)}(f) \defis \sum_{\tau \in \gamma\mathcal{F}_0(2)} \frac{\nu_\tau(f)}{e_{\tau,2}},\]
where $\lambda(\gamma)=\lambda$ and $e_{\tau,2}=2$ if $\tau$ is a $\gamma$-translate of $\frac{1}{2}+\frac{1}{2}\ii$ for $\gamma \in \Gamma_0(2)$, and $e_{\tau,2}=1$ else. Then, we claim
\[ N_{\lambda}^{(2)}(f) +  N_{\frac{\lambda-1}{2\lambda-1}}^{(2)}(f) \= 2\]
for all $\lambda$. Observe that $z\mapsto \frac{z-1}{2z-1}$ has the circle centered around $\frac{1}{2}$ with radius $\frac{1}{2}$ as its fixset. Now, the integral over the corresponding circular segment in the upper half plane yields a contribution of 
\[ \frac{k-p}{2}-2N_{\infty}^{(2)}(f)\]
with $k=4,p=2$ (namely, we integrate a function containing a $(k-p)$fold pole at $\lambda$ and $\frac{\lambda-1}{2\lambda-1}$ over $\frac{1}{2}$ of the circle). Moreover, the functions $h$ corresponding to $E_2\pm \sqrt{E_4}$ tend to $0$ and $1$ as $z$ tends to $0$ and $1$, and tend to $+\ii\infty$ for $\tau \to \ii\infty$ and $\tau\to 1+\ii \infty$. As exactly one of $\lambda$ and $\frac{\lambda-1}{2\lambda-1}$ lies between $0$ and $1$, we find in both cases that the contribution to the variation of the argument is $\frac{1}{2}$, which yields the claim. 

Next, let $U=\mathrm{int}\mathcal{F}_0(2)\backslash[\rho,\frac{1}{2}+i\infty)$. This is an open subset of $\mathfrak{h}$ invariant under $TC$ and $TST^2S$ (corresponding to $z\mapsto \frac{z-1}{2z-1}$). Moreover, $\sigma(CT)=(1\,2)$ and $\sigma(TST^2S)=(1\,2)$. As both leave the circle $\tfrac{1}{2}+\tfrac{1}{2}e^{\ii \theta}$ invariant, we conclude 
\[ |h_i(\tfrac{1}{2}+\tfrac{1}{2}e^{\ii \theta})-\tfrac{1}{2}|=\tfrac{1}{4}. \]
Hence, find that $N_\lambda^{(2)}(f)$ as a function of $\lambda$ can only change value at $\lambda=0,1$. Showing by other means that $N_
 \infty^{(2)}(f)=1$, one could conclude that
 \[N_\lambda^{(2)}(f)=1 \text{ for all $\lambda$}.\]

%\clearpage
\appendix

\section{The zeros of \texorpdfstring{$\Re(\widehat{E}_2^n)$}{the real} and \texorpdfstring{$\Im(\widehat{E}_2^n)$}{imaginary part of powers of the quasimodular Eisenstein series of weight 2}}
We are interested in counting the number of zeros of~$\Re(\widehat{E}_2^n)$ and~$\Im(\widehat{E}_2^n)$ on $(\frac{\pi}{3}, \frac{2 \pi}{3})$ for $n > 0$. This is, for instance, motivated by the following example. 

\begin{exmp}
Write $f = f_0 + E_2^p$, where $f_0$ is a modular form of weight $2p>0$.
In this case, 
\begin{equation}
    \Im(\widehat{f}) = \Im(\widehat{E}_2^p).
\end{equation}
Using \cref{prop:rootsreimE2} below, the function $\widehat{f}$ crosses the imaginary axis exactly 
$
    p - 1 - 2 \lfloor \frac{p}{3}\rfloor
$
times. This means that the variation of the argument of $\widehat{f}$ along $\mathcal{C}$ is at most 
$
        \pi \bigl( p - 2 \bigl \lfloor \frac{p}{3}\bigr \rfloor \bigr).
$
Therefore, 
\begin{equation}
    N_\infty(f) \:\leq\: \frac{1}{2} \Bigl\lfloor\frac{2p}{6}\Bigr\rfloor + \frac{1}{2}\Bigl(p-2\Bigl\lfloor\frac{p}{3}\Bigr\rfloor\Bigr) \:\leq\: \frac{p+1}{3}.
\end{equation}
\end{exmp}

\begin{prop}\label{prop:rootsreimE2}
The functions $\Re(\widehat{E}_2^n)$ and~$\Im(\widehat{E}_2^n)$ admit 
\[n - 2 \Bigl\lfloor \frac{n}{3} + \frac{1}{2} \Bigr\rfloor, \qquad \text{resp.} \qquad n - 1 - 2 \Bigl \lfloor \frac{n}{3}\Bigr\rfloor  \]
zeros on $(\frac{\pi}{3}, \frac{2 \pi}{3})$ for $n \geq 0$.
\end{prop}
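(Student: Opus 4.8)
The plan is to reduce the statement to elementary trigonometry, exploiting that $\widehat{E}_2$ parametrises a horizontal line. By~\eqref{eq:imE2hat} one has $\Im\widehat{E}_2(\theta)=\tfrac{3}{\pi}$ for all $\theta$, so $\widehat{E}_2(\theta)=u(\theta)+\tfrac{3\ii}{\pi}$ with $u(\theta)\defis\Re\widehat{E}_2(\theta)$ real-analytic. Substituting $t=\cot\phi$ gives $t+\ii=e^{\ii\phi}/\sin\phi$, hence $(t+\ii)^n=e^{\ii n\phi}/\sin^n\phi$; therefore the real polynomials $P_n(t)\defis\Re((t+\ii)^n)$ and, for $n\geq1$, $Q_n(t)\defis\Im((t+\ii)^n)$, of degrees $n$ and $n-1$, have exactly the simple real roots $\cot\tfrac{(2j+1)\pi}{2n}$ $(0\le j<n)$ and $\cot\tfrac{j\pi}{n}$ $(1\le j<n)$. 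Since $\widehat{E}_2(\theta)^n=(\tfrac{3}{\pi})^n\bigl(\tfrac{\pi}{3}u(\theta)+\ii\bigr)^n$, the zeros of $\Re(\widehat{E}_2^n)$ on $(\tfrac{\pi}{3},\tfrac{2\pi}{3})$ are precisely the $\theta$ with $\tfrac{\pi}{3}u(\theta)=\cot\tfrac{(2j+1)\pi}{2n}$ for some $j$, and those of $\Im(\widehat{E}_2^n)$ the $\theta$ with $\tfrac{\pi}{3}u(\theta)=\cot\tfrac{j\pi}{n}$ for some $j$.

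The one nontrivial ingredient is that $u$ is strictly decreasing on $[\tfrac{\pi}{3},\tfrac{2\pi}{3}]$, with $u(\tfrac{\pi}{3})=\tfrac{\sqrt 3}{\pi}=\tfrac{3}{\pi}\cot\tfrac{\pi}{3}$, $u(\tfrac{\pi}{2})=0$ and $u(\tfrac{2\pi}{3})=-\tfrac{\sqrt 3}{\pi}=\tfrac{3}{\pi}\cot\tfrac{2\pi}{3}$. The boundary and midpoint values follow from $E_2(\rho)=\tfrac{2\sqrt3}{\pi}$ and $E_2(\ii)=\tfrac{3}{\pi}$ --- valid because $E_2-\tfrac{3}{\pi\Im\tau}$ transforms like a modular form of weight $2$ and hence vanishes at the elliptic points $\rho$ and $\ii$ --- using $1$-periodicity, $e^{\ii\pi/3}=\rho+1$, $e^{2\pi\ii/3}=\rho$, and $\Im\widehat{E}_2=\tfrac3\pi$. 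For the monotonicity I would differentiate $\widehat{E}_2(\theta)=e^{\ii\theta}E_2(e^{\ii\theta})$: by Ramanujan's identity $\tfrac{1}{2\pi\ii}\tfrac{\dd}{\dd\tau}E_2=\tfrac{E_2^2-E_4}{12}$ and the relations $\tau^2E_2(\tau)^2=\widehat{E}_2(\theta)^2$, $\tau^2E_4(\tau)=\widehat{E}_4(\theta)$ for $\tau=e^{\ii\theta}$,
\[
\frac{\dd}{\dd\theta}\widehat{E}_2(\theta)\;=\;\ii\,\widehat{E}_2(\theta)-\frac{\pi}{6}\bigl(\widehat{E}_2(\theta)^2-\widehat{E}_4(\theta)\bigr),
\]
and taking real parts (recall $\widehat{E}_4\in\r$ by \cref{lem:r}, and $\widehat{E}_2=u+\tfrac{3\ii}{\pi}$) gives $u'(\theta)=-\tfrac{3}{2\pi}-\tfrac{\pi}{6}u(\theta)^2+\tfrac{\pi}{6}\widehat{E}_4(\theta)$. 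Since $E_4$ has its only zero in $\overline{\mathcal F}$ at $\rho$~\cite{RSD} and $\widehat{E}_4(\tfrac{\pi}{2})=-E_4(\ii)<0$, continuity forces $\widehat{E}_4(\theta)\le0$ on $[\tfrac{\pi}{3},\tfrac{2\pi}{3}]$, so $u'(\theta)\le-\tfrac{3}{2\pi}<0$.

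Granting this, $u$ restricts to a decreasing bijection $(\tfrac{\pi}{3},\tfrac{2\pi}{3})\to(-\tfrac{\sqrt3}{\pi},\tfrac{\sqrt3}{\pi})$, and each of the zeros above is simple. Hence $\Re(\widehat{E}_2^n)$ has exactly as many zeros on $(\tfrac{\pi}{3},\tfrac{2\pi}{3})$ as there are indices $j$ with $\cot\tfrac{(2j+1)\pi}{2n}\in(-\tfrac{1}{\sqrt3},\tfrac{1}{\sqrt3})$, i.e.\ (since $\cot$ strictly decreases on $(0,\pi)$) with $\tfrac{2n}{3}<2j+1<\tfrac{4n}{3}$; likewise $\Im(\widehat{E}_2^n)$ has as many zeros as there are integers in $(\tfrac{n}{3},\tfrac{2n}{3})$. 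Writing $n=3q+r$ with $r\in\{0,1,2\}$, one checks that these counts equal $q,q+1,q$ and $q-1,q,q+1$ respectively, which is $n-2\bigl\lfloor\tfrac{n}{3}+\tfrac12\bigr\rfloor$ and $n-1-2\bigl\lfloor\tfrac{n}{3}\bigr\rfloor$.

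The step I expect to be the crux is the monotonicity of $u$: although the computation above is short, it is precisely what makes each admissible angle count once, upgrading the evident upper bound to an equality, and it is the point at which the arithmetic of $E_2$ enters --- via the quasimodular transformation (through $\Im\widehat{E}_2=\tfrac3\pi$), Ramanujan's identity, and the Rankin--Swinnerton-Dyer description of the zeros of $E_4$.
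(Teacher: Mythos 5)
Your proposal is correct and follows essentially the same route as the paper: reduce to the real polynomials $\Re((t+\ii)^n)$, $\Im((t+\ii)^n)$ evaluated at $\tfrac{\pi}{3}\Re\widehat{E}_2$ (your cotangent parametrisation gives exactly the Chebyshev roots the paper uses), then count preimages using the monotonicity of $\Re\widehat{E}_2$ and its boundary values $\pm\tfrac{\sqrt3}{\pi}$. The one substantive difference is in your favour: the paper merely asserts that $\Re\widehat{E}_2$ is strictly decreasing on $(\tfrac{\pi}{3},\tfrac{2\pi}{3})$, whereas you actually prove it via Ramanujan's identity and the sign of $\widehat{E}_4$ on the arc, and your computation $u'=-\tfrac{3}{2\pi}-\tfrac{\pi}{6}u^2+\tfrac{\pi}{6}\widehat{E}_4\le-\tfrac{3}{2\pi}$ is correct.
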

The proof almost immediately follows from the following result. 
\begin{lem}
For $n>0$, write $$(x+\ii)^n = R_n(x) + \ii \,Q_n(x),$$
where $R_n(x), Q_n(x)\in \mathbb{Z}[x]$. Then all roots of $R_n$ and $Q_n$ are real, of which 
\[n - 2 \Bigl\lfloor \frac{n}{3} + \frac{1}{2} \Bigr\rfloor, \qquad \text{resp.} \qquad n - 1 - 2 \Bigl \lfloor \frac{n}{3}\Bigr\rfloor  \]
lie in $(-\frac{1}{\sqrt{3}},\frac{1}{\sqrt{3}})$.
\end{lem}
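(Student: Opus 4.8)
The plan is to substitute $x=\cot\theta$ with $\theta\in(0,\pi)$, which converts everything into trigonometry. Since $x+\ii=\cot\theta+\ii=e^{\ii\theta}/\sin\theta$, raising to the $n$-th power gives $(x+\ii)^n=e^{\ii n\theta}/\sin^n\theta$, and separating real and imaginary parts yields
\[ R_n(\cot\theta)=\frac{\cos n\theta}{\sin^n\theta},\qquad Q_n(\cot\theta)=\frac{\sin n\theta}{\sin^n\theta}\qquad(\theta\in(0,\pi)).\]
Because $\cot\colon(0,\pi)\to\r$ is a strictly decreasing bijection and $\sin\theta>0$ on this range, the real zeros of $R_n$ are precisely the numbers $\cot\tfrac{(2k-1)\pi}{2n}$ for $k=1,\dots,n$, and those of $Q_n$ are precisely the $\cot\tfrac{k\pi}{n}$ for $k=1,\dots,n-1$; these are $n$ resp.\ $n-1$ distinct real numbers. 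Expanding $(x+\ii)^n$ by the binomial theorem shows $R_n\in\z[x]$ is monic of degree $n$ and $Q_n\in\z[x]$ has degree $n-1$ with leading coefficient $n$; hence the lists above exhaust all roots, so all roots of $R_n$ and $Q_n$ are real (and simple).

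Next I would localize the roots relative to $\pm\tfrac1{\sqrt3}$. From $\cot\tfrac\pi3=\tfrac1{\sqrt3}$ and $\cot\tfrac{2\pi}3=-\tfrac1{\sqrt3}$ together with monotonicity of $\cot$, a root $\cot\psi$ lies in $(-\tfrac1{\sqrt3},\tfrac1{\sqrt3})$ if and only if $\psi\in(\tfrac\pi3,\tfrac{2\pi}3)$. Therefore the number of roots of $R_n$ in the interval equals the number of integers $k$ with $\tfrac\pi3<\tfrac{(2k-1)\pi}{2n}<\tfrac{2\pi}3$, i.e.\ with $\tfrac n3+\tfrac12<k<\tfrac{2n}3+\tfrac12$, and the number of roots of $Q_n$ equals the number of integers $k$ with $\tfrac n3<k<\tfrac{2n}3$.

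Finally I would evaluate these two counts of lattice points in an open interval, using the elementary identity $\lfloor a\rfloor+\lfloor N-a\rfloor=N-1$ for an integer $N$ and a non-integer $a$ (and $=N$ when $a\in\z$). The interval bounding $k$ for $R_n$ can be written as $(c,(n+1)-c)$ with $c=\tfrac n3+\tfrac12$; since $\tfrac n3$ is congruent to $0,\tfrac13$ or $\tfrac23$ modulo $1$, the number $c$ is never an integer, so both endpoints are non-integers and the count is $\lfloor(n+1)-c\rfloor-\lfloor c\rfloor=n-2\lfloor\tfrac n3+\tfrac12\rfloor$. For $Q_n$ the bounding interval is $(a,n-a)$ with $a=\tfrac n3$: if $3\nmid n$ the endpoints are non-integers and the count is $\lfloor n-a\rfloor-\lfloor a\rfloor=n-1-2\lfloor\tfrac n3\rfloor$, while if $3\mid n$ both endpoints are integers and hence excluded, again giving $\tfrac{2n}3-\tfrac n3-1=n-1-2\lfloor\tfrac n3\rfloor$. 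This produces the two asserted values and finishes the proof. The only mildly delicate part of the argument is this endpoint bookkeeping with floor functions — in particular verifying that the endpoints for $R_n$ are never integers — but it is entirely routine.
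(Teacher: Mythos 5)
Your proof is correct and is essentially the paper's argument in different notation: the substitution $x=\cot\theta$ together with de Moivre is exactly the paper's identification $R_n(x)=(x^2+1)^{n/2}T_n\bigl(x/\sqrt{x^2+1}\bigr)$ and $Q_n(x)=(1+x^2)^{(n-1)/2}U_{n-1}\bigl(x/\sqrt{1+x^2}\bigr)$ via Chebyshev polynomials, with the same root lists and the same lattice-point count at the end. Your version is slightly more explicit about the degree check and the endpoint bookkeeping (in particular that $\tfrac n3+\tfrac12$ is never an integer and that the roots $\pm\tfrac1{\sqrt3}$ of $Q_n$ for $3\mid n$ are excluded), which the paper only remarks on in passing.
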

\begin{proof}
For $x>0$, we may write
\begin{equation}
    (x+\ii)^n = (x^2+1)^{n/2} e^{\ii  \, n  \arctan(1/x) }.
\end{equation}
Therefore, for all $x$, we recognize 
\begin{align}
    R_n(x) &= (x^2+1)^{n/2} \, T_n( \cos( \arctan(1/x)))\\
                                &= (x^2+1)^{n/2} \, T_n \Bigl( \frac{x}{\sqrt{x^2+1}}\Bigr),
\end{align}
where $T_n$ is the $n$-th Chebyshev polynomial of the first kind, admitting the $n$ distinct real roots $\cos\bigl(\frac{(k+\frac{1}{2})\pi}{n}\bigr)$ for $k=0,\ldots,n-1$ in~$(-1,1)$. Hence, $R_n$ has $n$ distinct real roots, of which $n - 2 \bigl \lfloor \frac{n}{3} + \frac{1}{2} \bigr\rfloor$ are contained in $(-\tfrac{1}{\sqrt{3}},\tfrac{1}{\sqrt{3}})$.

Write $U_n$ for the $n$-th Chebyshev polynomial of the second kind. Then, for $n \geq 1$,
\begin{align}
    Q_n(x) &= (1+x^2)^{(n-1)/2} \, U_{n-1}(\cos(\arctan(1/x))) \\
                                 &= (1+x^2)^{(n-1)/2} \, U_{n-1}\Bigl( \frac{x}{\sqrt{1+x^2}}\Bigr),
\end{align}
from which one deduces that all roots of $Q_n$ are real and $n - 1 - 2 \bigl \lfloor \frac{n}{3}\bigr\rfloor$ lie in $(-\frac{1}{\sqrt{3}},\frac{1}{\sqrt{3}})$. Note that $Q_n$ admits a zero at $\pm \frac{1}{\sqrt{3}}$ for $n\equiv 0 \mod 3.$
\end{proof}
\begin{proof}[Proof of Proposition~\ref{prop:rootsreimE2}] We apply the previous lemma to $x = \frac{\pi}{3}\Re(\widehat{E}_2)$. Note that by~\eqref{eq:imE2hat}, we have $\Im(\widehat{E}_2) = \frac{3}{\pi}$. Hence,
\begin{align}
    \frac{\pi^n}{3^n}\Re (\widehat{E}_2^n ) &= \Re\bigl(\bigl(\frac{\pi}{3}\Re\bigl(\widehat{E}_2\bigr) + \ii\bigr)^n \bigr) 
    =R_n\bigl(\frac{\pi}{3}\Re\bigl(\widehat{E}_2\bigr)\bigr)
\end{align}
and
\begin{align}
       \frac{\pi^n}{3^n} \Im ( \widehat{E}_2^n ) = Q_n\bigl(\frac{\pi}{3}\Re\bigl(\widehat{E}_2\bigr)\bigr).
\end{align}

Observe that~$\Re(\widehat{E}_2)$ is a strictly decreasing function on $(\frac{\pi}{3}, \frac{2 \pi}{3})$ with a unique zero at $\theta = \frac{\pi}{2}$.
As on the boundary we have 
$$\frac{\pi}{3}\Re(\widehat{E}_2)\Bigl(\frac{\pi}{3}\Bigr) = -\frac{\pi}{3}\Re(\widehat{E}_2)\Bigl(\frac{2 \pi}{3}\Bigr) = \frac{1}{\sqrt{3}},$$
the proposition follows from the lemma above.
\end{proof}

\end{document}